\theoremstyle{plain}
\newtheorem{theorem}{Theorem}[section]
\newtheorem{lemma}[theorem]{Lemma}
\newtheorem{proposition}[theorem]{Proposition}
\newtheorem{corollary}[theorem]{Corollary}
\theoremstyle{definition}
\newtheorem*{definition}{Definition}
\theoremstyle{remark}
\newtheorem*{remark}{Remark}
\newtheorem*{notation}{Notation}
\numberwithin{equation}{section}
\newcommand{\seclabel}[1]{\label{sec:#1}}   
\newcommand{\thmlabel}[1]{\label{thm:#1}}   
\newcommand{\eqnlabel}[1]{\label{eqn:#1}}   
\newcommand{\eqnref}[1]{\eqref{eqn:#1}} 
\newcommand{\bA}{\mathbb{A}}
\newcommand{\AAA}{\mathbb{A}}
\newcommand{\bB}{\mathbb{B}}
\newcommand{\B}{\mathbb{B}}
\newcommand{\BB}{\mathbb{B}}
\newcommand{\K}{\mathbb{K}}
\newcommand{\bK}{\mathbb{K}}
\newcommand{\R}{\mathbb{R}}
\newcommand{\C}{\mathbb{C}}
\newcommand{\Z}{\mathbb{Z}}
\newcommand{\PP}{\mathbb{P}}
\newcommand{\HHH}{\mathbb{H}}
\newcommand{\bH}{\mathbb{H}}
\newcommand{\cX}{\mathcal{X}}
\newcommand{\XX}{\mathcal{X}}
\newcommand{\cY}{\mathcal{Y}}
\newcommand{\YY}{\mathcal{Y}}
\newcommand{\cP}{{\mathcal P}}
\newcommand{\cG}{{\mathcal G}}
\newcommand{\GG}{{\mathcal G}}
\newcommand{\sP}{\mathfrak{sp}}
\newcommand{\oo}{\mathfrak{o}}
\newcommand{\so}{\mathfrak{so}}
\newcommand{\gL}{\mathfrak{gl}}
\newcommand{\uu}{\mathfrak{u}}
\newcommand{\Gl}{\mathrm{GL}}
\newcommand{\UU}{\mathrm{U}}
\newcommand{\OO}{\mathrm{O}}
\newcommand{\Sp}{\mathrm{Sp}}
\newcommand{\Hom}{\mathrm{Hom}}
\newcommand{\Herm}{\mathrm{Herm}}
\newcommand{\Sym}{\mathrm{Sym}}
\newcommand{\Aherm}{\mathrm{Aherm}}
\newcommand{\Asym}{\mathrm{Asym}}
\newcommand{\Aut}{\mathrm{Aut}}
\newcommand{\End}{\mathrm{End}}
\newcommand{\Gras}{\mathrm{Gras}}
\newcommand{\im}{\mathrm{im}}
\newcommand{\id}{\mathrm{id}}
\newcommand{\dom}{\mathrm{dom}}
\newcommand{\indef}{\mathrm{indef}}
\newcommand{\pr}{\mathrm{pr}}
\newcommand{\eps}{\varepsilon}
\newcommand{\Bigsetof}[2]{\begin{Bmatrix} #1 \,\Big|\, #2 \end{Bmatrix}}
\newcommand{\inv}{^{-1}}
\newcommand{\msk}{\medskip}
\newcommand{\ssk}{\smallskip}
\newcommand{\nin}{\noindent}
\begin{document}

\title[Associative Geometries. II]{Associative Geometries. II: Involutions,
the classical torsors, and their homotopes}

\author{Wolfgang Bertram}

\address{Institut \'{E}lie Cartan Nancy \\
Nancy-Universit\'{e}, CNRS, INRIA \\
Boulevard des Aiguillettes, B.P. 239 \\
F-54506 Vand\oe{}uvre-l\`{e}s-Nancy, France}

\email{\url{bertram@iecn.u-nancy.fr}}

\author{Michael Kinyon}

\address{Department of Mathematics \\
University of Denver \\
2360 S Gaylord St \\
Denver, Colorado 80208 USA}

\email{\url{mkinyon@math.du.edu}}

\subjclass[2000]{
20N10, 
17C37, 
16W10
}

\keywords{
classical groups, homotope,
associative triple systems, semigroup completion,
involution, linear relation, adjoint relation, complemented lattice, orthocomplementation,
generalized projection, torsor}

\begin{abstract}
For all classical groups (and for their analogs in infinite
dimension or over general base fields or rings)
we construct certain contractions, called \emph{homotopes}.
The construction is geometric, using as ingredient \emph{involutions
of associative geometries}. We prove that, under suitable
assumptions, the groups and their homotopes have a canonical  semigroup
completion.
\end{abstract}

\maketitle

\section*{Introduction: The classical groups revisited}
\seclabel{intro}
The purpose of this work is to explain two remarkable features
of classical groups:
\begin{enumerate}
\item
every classical group is a member of a  ``continuous'' family
interpolating between the group and its ``flat'' Lie algebra;
put differently, there is a geometric construction of ``contractions''
(in this context also called \emph{homotopes}),
\item
every classical group and all of its homotopes
 admit a canonical completion to a semigroup;
 the underlying (compact) space of all of these ``semigroup hulls"
is the same for all homotopes.
\end{enumerate}
In fact, these results hold much more generally. The key property
of classical groups is that they are closely
related to
\emph{associative algebras}: either they are (quotients of)
unit groups of such algebras, or
they are (quotients of) \emph{$*$-unitary groups}
\begin{equation}
\UU(\bA,*):=\{ u \in \bA | \, uu^* = 1 \}
\label{00}
\end{equation}
for some \emph{involutive associative algebra} $(\bA,*)$.
This way of characterizing classical groups
suggests to consider as ``classical'' also all other groups given by
these constructions, including
 infinite-dimensional groups
and groups over general base fields or rings $\K$,
obtained from general involutive associative algebras $(\bA,*)$ over
$\K$.

\ssk
On an algebraic, or ``infinitesimal'', level, features (1) and (2)
are supported by simple observations on associative algebras:
as to (1), associative algebras really are families of products
$(x, y) \mapsto xay$ (the homotopes, see below), and as to (2), it is
obvious that an associative algebra forms a semigroup and not
a group with respect to multiplication.
Our task is, then, to ``globalize'' these simple observations, and
at the same time to put them into the form of a geometric
theory: we have to free them from choices of base points (such as
$0$ and the unit $1$ in an associative algebra). Just as in classical
geometry, this means to proceed from a ``linear'' to a ``projective''
formulation, with an ``affine'' formulation as intermediate step.
For classical groups of the ``general linear type'' ($A_n$), this
has already been achieved in Part I of this work (\cite{BeKi09}).
In the present  article we look at the remaining families ($B_n$,
$C_n$, $D_n$) and their generalizations.
They correspond to associative algebras \emph{with involution},
so that the geometric theory of \emph{involutions} will be a
central topic of this work.
Let us start by describing the ``infinitesimal'' situation
(i.e., the Lie algebra level), before explaining how to
``globalize'' it.

\subsection{Homotopes of classical Lie algebras}

The concept of \emph{homotopy} is at the base of Part I of this work:
an associative algebra $\AAA$ should be seen rather as a \emph{family} of
associative algebras $(\AAA, (x,y) \mapsto xay)$, parametrized by
$a \in \bA$. This  gives rise to a family of
Lie brackets $[x,y]_a = xay - yax$ also called {\it homotopes},
interpolating between the ``usual'' Lie bracket ($a=1$) and
the trivial one ($a=0$).
In particular, taking for $\bA$ the matrix space $M(n,n;\K)$
with Lie bracket $[X,Y]_A$ for $A \in M(n,n;\K)$ we get
a Lie algebra which will be denoted by $\gL_n(A;\K)$.

\ssk
For abstract Lie algebras, there is no such construction;
however, there is a variant that can be applied to
all classical Lie algebras:
let us add an \emph{involution} $*$ (antiautomorphism of order
$2$) as a new structural feature to our associative algebra
$\bA$, and write
$$
\bA = \Herm(\AAA,*) \oplus \Aherm(\AAA,*) =
\{ x \in \bA | \, x^* =x \} \oplus \{ x \in \bA | \, x^* = -x \}
$$
for the eigenspace decomposition.
If we fix $a \in \Herm(\AAA,*)$, then
$*:\bA \to \bA$ is an antiautomorphism of the
homotopic bracket $[ \cdot,\cdot]_a$,
and therefore $(\Aherm(\AAA,*),[ \cdot,\cdot]_a)$, $a \in \Herm(\bA,*)$,
is a family of Lie algebra structures on $\Aherm(\AAA,*)$, again
called homotopes.
Remarkably, the construction works also in the other direction:
if we fix $a \in \Aherm(\AAA,*)$, then
$\bA \to \bA$, $x \mapsto x^*$
is an automorphism of the homotope bracket $[ \cdot,\cdot]_a$,
and hence $(\Herm(\AAA,*),[ \cdot,\cdot]_a)$, $a \in \Aherm(\bA,*)$,
is a family of ``homotopic'' Lie algebra structures on $\Herm(\AAA,*)$.
For instance, taking for $\bA$ the matrix algebra $M(n,n;\K)$ with
involution $X^*:=X^t$ (transposed matrix; in this case we write
$\Sym(n;\K)$ and $\Asym(n;\K)$ for the eigenspaces), we get
contractions of the \emph{orthogonal Lie algebras},
denoted by $\oo_n(A;\K):=\Asym(n;\K)$ with bracket
$[X,Y]_A$ for symmetric matrices $A$. For $A=1$,
we get the usual Lie algebra $\oo(n)$; for $A = I_{p,q}$ (diagonal
matrix of signature $(p,q)$ with $p+q=n$) we get the pseudo-orthogonal
algebras $\oo(p,q)$, but for $p+q < n$ we get a new kind of Lie algebras:
they are \emph{not} Lie algebras defined by a form since  the Lie algebra of a degenerate form
has bigger dimension than the one of a non-degenerate form,
whereas our contractions preserve dimension.
Likewise, for skew-symmetric $A$, we get homotopes of
``symplectic type''
$\sP_{n/2}(A;\K):=\Sym(n;\K)$ with bracket $[X,Y]_A$.
If $n$ is even and $A$ invertible, then this algebra is isomorphic
to the usual symplectic algebra $\sP(n,\K)$, and if $A$ is not invertible,
we get ``degenerate'' homotopes; as in the
orthogonal case, these algebras are not Lie algebras defined by a
degenerate skewsymmetric form. If $n$ is odd, then the family
contains only ``degenerate members'', which we call
\emph{half-symplectic}.

\ssk
Summing up, looking at homotope Lie brackets on $\Aherm(\bA,*)$
not only serves to imbed the usual Lie bracket into a family,
but also to restore a remarkable formal duality between
$\Aherm(\bA)$ and $\Herm(\bA)$ which usually gets lost.
An algebraic setting that takes account of this duality from
the outset is the one of an \emph{associative pair}
(see Appendix A and  \cite{BeKi09}).
For instance, the square matrix algebras $\gL_n(A;\K)$ are generalized
by the \emph{rectangular matrix algebras}
$\gL_{p,q}(A;\K):=M(p,q;\K)$ with bracket $[X,Y]_A$ where $A$ now belongs to
the ``opposite'' matrix space $M(q,p;\K)$.
In the pair setting, a map $\phi$ is an involution if and only if
so is $-\phi$, and hence $\Herm(\phi)$ and $\Aherm(\phi)$ simply
interchange their r\^oles if we replace $\phi$ by $-\phi$.
It is only the consideration of \emph{unit} or \emph{invertible elements} that may
break this symmetry: they may exist in one space but not in the
other.

\ssk
The following table summarizes the definition of classical
Lie algebras and their homotopes.
In the general linear cases, $\K$ may be any ring (in particular,
the quaternions $\HHH$ are admitted); in the orthogonal and symplectic
families $\K$ has to be a commutative ring, and for the unitary families
we use an involution of $\K$:  if $\K=\C$, we use usual
complex conjugation, and for $\K=\HHH$ we use the following
conventions: if nothing else is specified, we use the
``usual'' conjugation $\lambda \mapsto \overline \lambda$
 (minus one on the imaginary part
$\im \HHH$ and one on the center $\R \subset \HHH$).
If we consider $\HHH$ with its  ``split'' involution $\lambda \mapsto
\widetilde \lambda:=j \overline \lambda j^{-1}$, then we write $\widetilde \HHH$.
For instance,
$\Herm(n;\widetilde \HHH)$ is the space of quaternionic matrices
such that $\widetilde X = X^t$, and  $\uu_n(1 ;\widetilde \HHH)$
is the Lie algebra often denoted by $\so^*(2n)$.
In all cases, the Lie bracket is
$[X,Y]_A = XAY - YAX$.
Note finally that the trace map does not behave well with respect
to our contractions, and therefore we do not define homotopes of
special linear or special unitary algebras.
\msk

\noindent
\begin{tabular}{llll}
family name & label and space & parameter space & Lie bracket \cr
\hline
general linear (square)
&$\gL_n(A;\K):=M(n,n;\K)$
& $A \in M(n,n;\K)$
& $[X,Y]_A$
\cr
general linear (rectan.)
&
$\gL_{p,q}(A;\K):=M(p,q;\K)$
& $A \in M(q,p;\K)$
& $[X,Y]_A$
\cr
orthogonal
&
$\oo_n(A;\K):= \Asym(n;\K)$
& $A \in \Sym(n;\K)$
& $[X,Y]_A$
\cr
[half-] symplectic
&$\sP_{n/2}(A;\K):= \Sym(n;\K)$
& $A \in \Asym(n;\K)$
& $[X,Y]_A$
\cr
$\C$-unitary
&$\uu_n(A;\C):= \Aherm(n;\C)$
& $A \in \Herm(n;\C)$
& $[X,Y]_A$
\cr
$\HHH$-unitary
& $\uu_n(A;\HHH):= \Aherm(n;\HHH)$
& $A \in \Herm(n;\HHH)$
& $[X,Y]_A$
\cr
$\HHH$-unitary split
& $\uu_n(A;\widetilde \HHH):= \Aherm(n;\widetilde \HHH)$
& $A \in \Herm(n;\widetilde \HHH)$
& $[X,Y]_A$
\end{tabular}

\msk \nin
The expert reader will certainly have remarked that everything we
have said so far holds, {\sl mutatis mutandis}, for ``Lie'' replaced
by ``Jordan'': $\Herm(\bA,*)$ is a \emph{Jordan} algebra, and
in the \emph{Jordan pair} setting the r\^oles of $\Herm(\bA)$ and
$\Aherm(\bA)$ become more symmetric.
Indeed, a conceptual and axiomatic theory will use the Jordan- \emph{and}
Lie-aspects of an associative product in a crucial way
-- see remarks in Chapter 6 and in \cite{Be08c}. In order to keep this paper
accessible for a wide readership, no use of Jordan
theory will be made in this work.

\subsection{Homotopes of classical groups}

Now let us explain the main ideas serving to ``globalize'' the
Lie algebra situation just described.
First of all, for the classical Lie algebras introduced above
it is easy to define explicitly a corresponding
 algebraic group: in the setting of an abstract unital algebra
$\bA$ with Lie bracket $[x,y]_a=xay-yax$, one defines the set
$$
G(\bA,a) := \{ x \in \bA | \, 1 - xa \in \bA^\times \}
$$
and checks that
$$
x \cdot_a y := x + y - xay
$$
is a group law on
$G(\bA,a)$ with neutral element $0$ and inverse of $x$ given by
$$
j_a(x):= - (1 - xa)^{-1} x.
$$
It is easily seen (cf.\ Lemma \ref{Liealgebra})
that the Lie algebra of this group is given by the bracket $[x,y]_a$.
Next, observe that an involution $*$ of $\bA$ induces an isomorphism
from $G(\bA,a)$ onto the opposite group of $G(\bA,a^*)$. Therefore,
if $a$ is Hermitian, $*$ induces a group antiautomorphism of order $2$, and
we can define the \emph{$a$-unitary group} as usual to be the subgroup
of elements $g \in G(\bA,a)$ such that $g^* = j_a(g)$.
If $a$ is skew-Hermitian, the \emph{$a$-symplectic group} is defined
similarly by the condition $-g^* = j_a(g)$.
Specializing to the classical matrix algebras, we get the following
list of classical groups:
\msk

\noindent
\begin{tabular}{llll}
label &  underlying set  & parameter space & product \cr
\hline
 $\Gl_n(A;\K)$ & $:=\{ X \in M(n,n;\K) | 1 - AX \, \mbox{invertible} \}$
& $A \in M(n,n;\K)$
& $X \cdot_A Y$
\cr
$\Gl_{p,q}(A;\K)$ & $:=\{ X \in M(p,q;\K) | 1 - AX \, \mbox{invertible} \}$
& $A \in M(q,p;\K)$
& $X \cdot_A Y$
\cr
$\OO_n(A;\K)$ & $:= \{ X \in \Gl_n(A,\K) |  X + X^t = X^t AX \}$
& $A \in \Sym(n;\K)$
& $X \cdot_A Y$
\cr
$\Sp_{n/2}(A;\K)$ & $:= \{ X \in \Gl_n(A,\K) |  X - X^t = X^t AX \}$
& $A \in \Asym(n;\K)$
& $X \cdot_A Y$
\cr
 $\UU_n(A;\C)$ &
$:= \{ X \in \Gl_n(A,\K) |  X + \overline X^t = \overline X^t AX
\}$
& $A \in \Herm(n;\C)$
& $X \cdot_A Y$
\cr
 $\UU_n(A;\HHH)$ & $:= \{ X \in \Gl_n(A,\HHH) |
X + \overline X^t = \overline X^t AX
\}$
& $A \in \Herm(n;\HHH)$
& $X \cdot_A Y$
\cr
 $\OO_n(A;\widetilde \HHH)$ & $:= \{ X \in \Gl_n(A,\HHH) |
X + \widetilde X^t = \widetilde X^t AX
\}$
& $A \in \Herm(n;\widetilde \HHH)$
& $X \cdot_A Y$
\end{tabular}

\msk \nin
Finally, one may observe that this realization of classical groups has the advantage of leading to
a natural ``semigroup hull": e.g., if $A^t=A$, a direct computation shows that the set
$\hat \OO_n(A;\K):=\{ X \in M(n,n;\K) \vert \, X^t +X =X^t AX \}$
is stable under the product $\cdot_A$, which turns it into a semigroup with unit element
$0$, and similarly in all other cases.

\subsection{``Projective'' theory of classical torsors}

The definition of the classical groups given
above is useful for calculating their Lie algebras and for starting
to analyze their group structure (and their topological structure if
$\K$ is a topological field or ring), but also has several
drawbacks: firstly, note that the product $X \cdot_A Y$
is affine in both variables, and hence our groups are realized
as subgroups of the affine group of the matrix space $M(n,n;\K)$.
The corresponding \emph{linear representation} in a space of dimension
$n^2 + 1$ is not very natural, and one may wish to realize these
groups in more natural linear representations.
Secondly,
whereas
 the general linear groups are, for all $A$, realized as (Zariski-dense)
parts of a common ambient space ($M(n,n;\K)$, resp.\ $M(p,q;\K)$),
this is not the case for the other classical groups:
the underlying set depends on $A$, and hence the realization
is not adapted to the point of view of deformations or contractions.
Finally, and related to the preceding item, one has the impression that
the ``semigroup hull" $\hat \OO_n(A;\K)$ depends on the realization,
and that it should rather be part of some maximal semigroup
hull  intrinsically associated to the group $\OO_n(A;\K)$ .

\ssk
In the present work, we will give another
realization of the classical groups (and, much more generally,
of the groups attached to abstract involutive algebras) having
none of these drawbacks: it is a sort of
projective realization, as opposed to the affine picture just given.
In a first step, we get rid of base points in groups by
considering them as \emph{torsors}, that is, we work with
the ternary product $(xyz):=xy^{-1}z$ of a group.
By \emph{classical torsor} we simply mean  a classical group
from the preceding table equipped with this ternary law, i.e.,
by forgetting their base points.
For the general linear family, we have seen in Part I of this
work that there is a common realization of all groups
$\Gl_{p,q}(A,\K)$ inside the \emph{Grassmannian}
$\cX:=\Gras(\K^{p+q})$ in such a way that they
are realized as subgroups of the projective group $\PP \Gl(p+q,\K)$.
The parameter space is again the complete space $\cX$, and
``space'' and ``parameter'' variables are incorporated into
a single object (called an \emph{associative geometry}, given by
a pentary product map $\Gamma:\cX^5 \to \cX$)
having surprising properties.
In the present work we show that,
for the other families, there is a more refined construction,
relying on the existence of \emph{involutions} (antiautomorphisms of
order 2)
of associative geometries.
For the classical groups, these involutions are orthocomplementation
maps, so that the fixed point spaces are \emph{varieties of
Lagrangian subspaces}.
We will realize all orthogonal groups as (Zariski dense) subsets
of the Lagrangian variety of a quadratic form of signature
$(n,n)$, and the [half-] symplectic groups in the Lagrangian variety
of a symplectic form on $\K^{2n}$.
 The underlying Lagrangian
variety plays the r\^ole of a ``projective completion'' of these
groups (also called ``projective compactification'' if $\K=\R$ or $\K=\C$ since it is
compact in these cases), and in particular we will show that
the group law extends to a semigroup law on the projective completion,
thus defining the intrinsic and maximal (compact) semigroup hull
for all classical groups and their homotopes.
As in the general linear case, this achieves a realization in
which all ``deformations'' or ``contractions'' are globally defined
on the space level.
In contrast to the general linear case, the parameter space now
is different from the underlying Lagrangian variety of the
group spaces: it is another Lagrangian variety which we call
the \emph{dual Lagrangian}. This duality reflects the duality
between $\Herm(\bA,*)$ and $\Aherm(\bA,*)$ mentioned in Section 0.1.

\subsection{Contents}

The contents of this paper is as follows:
in Chapter 1 we recall basic facts on the ``general linear construction'';
in Chapter 2 we define and construct involutions of associative
geometries: in Theorem \ref{Th:3.2} we prove that orthocomplementation maps
of non-degenerate forms are involutions; in Chapter 3 we
 describe the ``projective" construction of torsors and groups
associated to (restricted) involutions of associative geometries
(Lemma  \ref{clue}), their tangent objects with respect to various
choices of base points (Theorems \ref{unitalMaintheorem}
 and \ref{functor'}) as well as the link with the ``affine" realization given
 above (Theorem \ref{conjug}). In Chapter 4 we present the classification
 of homotopes of classical groups (over $\K=\R$ or $\C$, the case of general base fields
 or rings being at least as complicated as the problem of classifying involutive associative
 algebras, see \cite{KMRS98}). In
Chapter 5 we describe the semi-group completion of classical groups
(Theorem \ref{semitorsor}); the main difficulty here is to prove that
non-degenerate forms induce involutions of geometries in a
``strong'' sense. This requires some investigation of the linear
algebra of linear relations, complementing those from Chapter 2 of
Part I of
this work, and which may be of interest in its own right.
Finally, in Chapter 6 we give some brief comments on a possible axiomatic approach,
involving both the Jordan- and the Lie side of the whole structure, and
Appendix A contains the relevant definitions on involutions of associative pairs.

\subsection{Related work}

Finally, let us add some words on related literature.
It seems to be folklore in symplectic geometry that the group law
of $\Sp(m,\R)$ extends to the whole Lagrangian variety if we interpret it via
\emph{composition of linear relations}: the composition of two
Lagrangian linear relations is again Lagrangian
(see appendix on ``linear symplectic reduction'' in
\cite{CDW87} or Theorem 21.2.14 in \cite{Ho85}).
In a case-by-case way, Y.\ Neretin (\cite{Ner96}) has given
similar constructions for other families of complex or real Lagrangrian
varieties (``categories $B$, $C$, $D$'', see loc.\ cit., p.\ 85 ff and
loc.\ cit.\ Appendix A for their real analogs).
It would be very interesting to investigate
further the relationship between our work and Neretin's, in particular
in view of applications in harmonic analysis and quantization.
Note that Neretin in loc.\ cit.\ p.\ 59 uses
a modified composition law of linear relations
in order to obtain a jointly continuous
operation; since we do not consider topologies here, we leave
the [important] topic of joint continuity for later work.

\begin{notation}
Throughout this work, $\bK$ denotes  a commutative unital ring and
$\bB$ an associative unital $\bK$-algebra, and we will consider
\emph{right} $\bB$-modules $V,W,\ldots$.
We think of $\bB$ as ``base ring'', and the letter $\bA$ will be
reserved for other associative $\bK$-algebras such as $\End_\bB(W)$.

If $V = a \oplus b$ is a direct sum decomposition of a vector space
or module, we denote by $P^a_b:V \to V$ the projection with kernel
$a$ and image $b$.
\end{notation}

\section{The general linear family}

\subsection{Groups and torsors living in Grassmannians} \label{sec:1.1}

We are going to recall the basic construction from \cite{BeKi09}
which realizes groups like $\Gl_n(A,\K)$ inside a Grassmannian manifold.
Let $W$ be a right $\B$-module and
$\XX = \Gras(W)$ be the Grassmannian of all right $B$-submodules
of $W$. A pair $(x,a) \in \XX^2$ is called \emph{transversal}
(denoted by $a \top x$ or $x \top a$) if $W=x \oplus a$.
The set of all complements of $a$ is denoted by $C_a$, so that
$$
C_{ab}:=C_a \cap C_b
$$
is the set of common complements of $a$ and $b$. One of the main
results of \cite{BeKi09}  says that the set $C_{ab}$ carries two
canonical torsor-structures. More precisely, we define,
for $(x,a,b,z) \in \cX^4$ such that $a \top x$, $b \top z$,
the endomorphism of $W$
\begin{equation}
M_{xabz}:=P^a_x - P_b^z = P^a_x - 1 + P_z^b . \label{1.1}
\end{equation}
By a direct calculation  (see  \cite{BeKi09}, Prop.\ 1.1), one
sees that
\begin{equation}
M_{xabz} = M_{zbax}, \quad M_{xabz} = - M_{axzb}, \label{1.2}
\end{equation}
and, if $x,z \in U_{ab}$, then $M_{xabz}$ is invertible with inverse
\begin{equation}
(M_{xabz})^{-1} = M_{zabx} = M_{xbaz} . \label{1.3}
\end{equation}
Recall  (see, e.g., \cite{BeKi09}) that a \emph{torsor} is the base point-free
version of a group (a set $G$ with a ternary map $G^3 \to G$,
$(xyz) \mapsto (xyz)$ such that $(xyy)=x=(yyx)$ and
$(xy(zuv))=((xyz)uv)$).
Then (\cite{BeKi09}, Th.\ 1.2):

\begin{theorem}
\thmlabel{geom_props}
\begin{enumerate}[label=\roman*\emph{)},leftmargin=*]
\item For $a,b \in \cX$ fixed, $C_{ab}$ with product
\[
(xyz):= \Gamma(x,a,y,b,z):=M_{xabz}(y)
\]
is a torsor (which will be denoted by $U_{ab}$).
In particular, for all $y \in C_{ab}$, the set
$C_{ab}$ is a group with unit $y$ and multiplication
$xz=\Gamma(x,a,y,b,z)$. 
\item
$U_{ab}$ is the opposite torsor of $U_{ba}$
(same set with reversed product):
\[
\Gamma(x,a,y,b,z)=\Gamma(z,b,y,a,x)
\]
 In particular,
the torsor $U_a:=U_{aa}$ is commutative.
\item The commutative torsor $U_a$ is the underlying additive torsor
of an affine space:
$U_a$ is an affine space over $\bK$, with
additive structure given by
\[
x+_y z = \Gamma(x,a,y,a,z),
\]
(sum of $x$ and $z$ with respect to the origin $y$),
and  action of scalars  given by
\[
\Pi_s(x,a,y):= sy + (1-s)x = (s P^x_a + P^a_x) (y)
\]
(multiplication of $y$ by $s$ with respect to the origin $x$).
\end{enumerate}
\end{theorem}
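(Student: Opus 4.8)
The plan is to verify the torsor axioms for $(C_{ab}, \Gamma)$ directly from the definition $\Gamma(x,a,y,b,z) = M_{xabz}(y)$, relying entirely on the algebraic identities \eqref{1.2} and \eqref{1.3} for the endomorphisms $M_{xabz}$. The three parts of the theorem are logically linked: part (i) is the heart of the matter, and parts (ii) and (iii) follow from (i) together with the symmetry relations in \eqref{1.2}. So I would organize the proof around establishing (i) first, then deducing the rest.

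For part (i), the first thing to check is that the product is well defined, i.e. that $\Gamma(x,a,y,b,z) \in C_{ab}$ whenever $x,y,z \in C_{ab}$. Since $M_{xabz}$ is invertible (by \eqref{1.3}, as $x,z \in U_{ab} = C_{ab}$) and $y$ is a complement of both $a$ and $b$, one must argue that an invertible endomorphism built from the relevant projections carries a common complement of $a,b$ to another common complement; I expect this to reduce to tracking how $M_{xabz}$ interacts with the submodules $a$ and $b$. Next come the two torsor identities. The \emph{degeneracy} condition $(xyy)=x=(yyx)$ should follow because $M_{xaby}$ and $M_{yabx}$, evaluated appropriately, reduce to identity-like behavior when two arguments coincide; concretely, using $M_{xabx} = P^a_x - P^x_b$ and the fact that $P^a_x(x)$-type terms collapse. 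The \emph{associativity} (para-associativity) identity $(xy(zuv)) = ((xyz)uv)$ is the substantive computation: writing both sides out via $M$, the left side is $M_{xab}(\,\cdot\,)$ applied to $M_{zub\ldots}$-output and the right side is $M_{\ldots abv}$ applied after forming $(xyz)$, and one wants these to agree. The key tool here is the composition behavior of the maps $M_{xabz}$ — the inversion formula \eqref{1.3} strongly suggests that the $M$'s multiply in a controlled, Menger-algebra-like fashion, and I would look for an identity expressing $M_{p\,a\,q\,b\,r}$ composed appropriately in terms of a single $M$ with recombined indices.

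For part (ii), the opposite-torsor claim is immediate from the symmetry $M_{xabz} = M_{zbax}$ in \eqref{1.2}: this gives $\Gamma(x,a,y,b,z) = M_{xabz}(y) = M_{zbax}(y) = \Gamma(z,b,y,a,x)$, and commutativity of $U_a = U_{aa}$ follows by setting $b=a$, since then the product equals its own reverse. For part (iii), once commutativity is known, I would identify $+_y$ and $\Pi_s$ as the affine structure: the addition $x +_y z = \Gamma(x,a,y,a,z)$ inherits associativity and commutativity from the torsor, $y$ is the zero, and the scalar action $\Pi_s(x,a,y) = sP^x_a + P^a_x$ applied to $y$ must be checked to satisfy the affine-space axioms (compatibility of scalar multiplication with $+_y$, the relations $\Pi_1 = \id$, $\Pi_0 = $ constant-at-$x$, and distributivity). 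This last verification is a routine computation with the explicit projection formula $sP^x_a + P^a_x$, using $P^x_a + P^a_x = 1$.

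\textbf{The main obstacle} I anticipate is the para-associativity identity in part (i): unlike the degeneracy axiom, it does not collapse through coincidence of arguments, and it requires the correct ``composition law'' for the maps $M_{xabz}$. The cleanest route is probably to prove an auxiliary lemma of the form $M_{x\,a\,y\,b\,z}\circ(\text{something}) = M_{(\ldots)}$ that packages how two consecutive $\Gamma$-operations combine, rather than expanding every projection by brute force; getting the index bookkeeping right there is where the real work lies. Since the statement is quoted as Theorem 1.2 of \cite{BeKi09}, I would in practice cite that reference for the detailed verification and present the deductions of (ii) and (iii) from (i) in full, as those are short and illuminate the structure.
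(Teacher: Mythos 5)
This paper never proves the theorem itself: it is quoted from Part I (\cite{BeKi09}, Th.\ 1.2), with only the identities \eqref{1.2}--\eqref{1.3} recalled beforehand, so your closing decision to cite that reference for the detailed verification is precisely the paper's own treatment. What you do carry out is correct and consistent with that: (ii) is indeed immediate from $M_{xabz}=M_{zbax}$, commutativity of $U_a$ follows by setting $b=a$, and your outline for (i) and (iii) isolates exactly the substantive points (well-definedness of the product on $C_{ab}$ and the para-associative law, for which a composition rule for the operators $M_{xabz}$ is the natural tool).
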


\begin{definition} {\bf (The restricted multiplication map)}
We call \emph{restricted multiplication map} the map
$\Gamma:D_5 \to \XX$, defined on the \emph{set of admissible $5$-tuples}
$$
D_5:=\{(x,a,y,b,z) \in \XX^5 | \, x,y,z \in C_{ab} \}  ,
$$
by the formula from part i) of
the preceding theorem.
\end{definition}

\begin{definition} {\bf (Base points and tangent spaces)}
A \emph{base point} in $\cX$ is a fixed transversal pair,
usually denoted by $(o^+,o^-)$.
The \emph{tangent space at $(o^+,o^-)$} is the pair
$$
(\bA^+,\bA^-):= (C_{o^-},C_{o^+}) .
$$
Note that $(\bA^+,\bA^-)$
is a pair of $\K$-modules (with origin $o^\pm$ in $\bA^\pm$),
isomorphic to
\begin{equation}
\bigl(\Hom_\bB(o^+,o^-),\Hom_\bB(o^-,o^+) \bigr).
\eqnlabel{affine}
\end{equation}
This tangent space carries the structure of an \emph{associative
pair}  given by trilinear products (see \cite{BeKi09}, Th.\ 1.5)
\begin{equation}
\bA^\pm \times \bA^\mp \times \bA^\pm \to \bA^\pm, \quad
(u,v,w) \mapsto \langle u,v,w\rangle^\pm:=\Gamma(u,o^+,v,o^-,w).
\end{equation}
\end{definition}

\begin{definition} {\bf (Transversal triples)}
A \emph{transversal triple} is a triple of mutually transverse
elements. If we fix such a triple, we usually denote it by
$(o^+,e,o^-)$. In this case, $\bA:=C_{o^-}$ carries the
structure of an associative algebra with origin $o:=o^+$ and unit $e$, called the
\emph{tangent algebra at $o^+$ corresponding to the base triple
$(o^+,e,o^-)$}, with product
\begin{equation}
\bA \times \bA \to \bA, \quad
(u,v) \mapsto \Gamma(u,o^+,e,o^-,v).
\label{asproduct}
\end{equation}
In a dual way, $C_{o^+}$ is turned into an algebra with origin $o^-$. Both algebras
are canonically isomorphic via the inversion map $j=M_{eo^+o^-e}$.
\end{definition}

\subsection{Lie algebra and structure of the torsors $U_{ab}$}
\label{sec:1.2}
We explain the link between the torsors $U_{ab}$ and the groups
$\Gl_{p,q}(A;\bB)$ defined in the Introduction, as well as the
computation of their ``Lie algebra".

\begin{lemma} \label{Liealgebra}
Choose an origin $o^+$ in $U_{ab}$ and an element $o^- \top o^+$.
Then
the Lie algebra (in a sense to be explained in the following
proof) of the group $(U_{ab},o^+)$ is the ``tangent space''
$\bA^+=\Hom_\bB(o^+,o^-)$ with Lie bracket
$$
[X,Y] = X (a-b) Y - Y (a-b) X
$$
(note that $o^+ \in U_{ab}$ means that $a,b \in C_{o^+}=\bA^-$, so that
$a-b \in \bA^-$).
In particular, choosing $o^-=b$, we get the Lie algebra of $U_{A0}$:
$$
[X,Y] = XAY - YAX.
$$
\end{lemma}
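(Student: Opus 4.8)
The plan is to make the phrase ``Lie algebra'' precise by passing to the dual numbers $\bK[\eps]$ (with $\eps^2=0$): I identify the tangent space at the unit $o^+$ with $\bA^+$, read off the group law of $(U_{ab},o^+)$ in a chart, and define $[X,Y]$ as the coefficient of $\eps\eps'$ in the infinitesimal commutator over $\bK[\eps,\eps']$ (equivalently, as $B(X,Y)-B(Y,X)$, where $B$ is the bilinear part of the product expressed in the chart). The first step is to set up coordinates. Using the transversal $o^-$, I identify $\bA^+=C_{o^-}$ with $\Hom_\bB(o^+,o^-)$ by sending $x\in C_{o^-}$ to the homomorphism $X$ whose graph inside $W=o^+\oplus o^-$ is $x$; here the unit $o^+$ corresponds to $X=0$. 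Since $o^+\in U_{ab}$ forces $a,b\in C_{o^+}=\bA^-$, I likewise write $a$ and $b$ as graphs of $\alpha,\beta\in\Hom_\bB(o^-,o^+)$, so that $a-b\in\bA^-$ corresponds to $\alpha-\beta$.

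Next I compute the product $X\cdot Y=\Gamma(x,a,o^+,b,y)=M_{xaby}(o^+)$ in these coordinates. Writing everything in block form with respect to $W=o^+\oplus o^-$, I evaluate the two projections making up $M_{xaby}=P^a_x-P^y_b$ on the subspace $o^+$; solving the transversality equations $x\oplus a=W$ and $y\oplus b=W$ gives
\[
P^a_x(u,0)=\bigl((1-\alpha X)^{-1}u,\ X(1-\alpha X)^{-1}u\bigr),\qquad
P^y_b(u,0)=\bigl(-\beta Y(1-\beta Y)^{-1}u,\ -Y(1-\beta Y)^{-1}u\bigr),
\]
so that $M_{xaby}(o^+)$ is the image of $u\mapsto(f_1u,f_2u)$ with $f_1=(1-\alpha X)^{-1}+\beta Y(1-\beta Y)^{-1}$ and $f_2=X(1-\alpha X)^{-1}+Y(1-\beta Y)^{-1}$. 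Re-expressing this subspace as a graph over $o^+$ yields the closed formula $X\cdot Y=f_2f_1^{-1}$, the inverse existing near the unit and exactly over $\bK[\eps]$.

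It then remains to extract the bracket. Expanding $f_2f_1^{-1}$ to bilinear order gives $X\cdot Y=X+Y+B(X,Y)+\cdots$ with $B(X,Y)=-X\beta Y-Y\alpha X$; this expansion is not merely formal but exact once $X,Y$ are replaced by $\eps X,\eps'Y$ over $\bK[\eps,\eps']$, since every higher term carries a factor $\eps^2$ or $\eps'^2$. Antisymmetrizing,
\[
[X,Y]=B(X,Y)-B(Y,X)=X(\alpha-\beta)Y-Y(\alpha-\beta)X=X(a-b)Y-Y(a-b)X,
\]
which is the asserted bracket; conceptually this is just the commutator of the associative homotope product $X,Y\mapsto X(a-b)Y$ on $\bA^+$. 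The special case $U_{A0}$ follows by taking $o^-=b$, for then $b$ is the graph of $\beta=0$ and $a$ that of $\alpha=A$, leaving $[X,Y]=XAY-YAX$.

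The main obstacle I expect is bookkeeping rather than conceptual depth: computing the two projections in block form and, above all, converting the image subspace $M_{xaby}(o^+)$ back into graph coordinates to obtain $f_2f_1^{-1}$. A secondary point to handle with care is the meaning of ``Lie algebra'' over a general base ring $\bK$, where no manifold structure is available; the dual-number device supplies a rigorous substitute and simultaneously shows independence of the auxiliary chart $o^-$, since the final bracket depends only on $a-b$ and on the associative-pair structure of $(\bA^+,\bA^-)$.
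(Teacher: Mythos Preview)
Your proof is correct and follows the same conceptual route as the paper: make ``Lie algebra'' precise via scalar extension by dual numbers, express the group law of $(U_{ab},o^+)$ in a linear chart, and extract the bracket as the antisymmetrized bilinear part of the product (equivalently, the group commutator over $\bK[\eps_1,\eps_2]$).

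The difference is only in execution. The paper first invokes chart-independence of the construction (citing \cite{Be06}) to justify the simplifying choice $o^-:=b$; in that chart the group law collapses to the formula $X\cdot Z=X+Z-ZAX$ already recorded in \cite{BeKi09}, and the bracket $[X,Y]=XAY-YAX$ drops out after a short commutator computation in the second tangent ring. The general formula with $a-b$ is then claimed to follow from this special case. You instead keep $o^-$ arbitrary, compute $P^a_x$ and $P^y_b$ explicitly in block form relative to $W=o^+\oplus o^-$, convert $M_{xaby}(o^+)$ back into graph coordinates as $f_2f_1^{-1}$, and read off $B(X,Y)=-X\beta Y - Y\alpha X$ directly, obtaining the general bracket $X(\alpha-\beta)Y-Y(\alpha-\beta)X$ in one stroke. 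Your route is more self-contained---it needs neither the affine-picture formula from Part~I nor the chart-independence argument from \cite{Be06}---at the price of the heavier block-matrix computation you flagged; the paper's route is shorter but leans on those external references.
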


\begin{proof}
The Lie algebra can be defined in a purely algebraic way, without
using ordinary differential calculus, as follows.
Let $T\K:=\K[\eps]:=\K[X]/(X^2)$,
$\eps^2=0$ be the ring of dual numbers over $\K$
and $TT\K:=T(T\K):=(\K[\eps_1])[\eps_2]$ be the ``second
order tangent ring''. Then $(\cX,\Gamma)$ admits scalar extensions
from $\K$ to $T\K$ and to $TT\K$, and the commutator in the
second scalar extension of the group $U_{ab}$ gives rise to
the Lie bracket in the way described in  \cite{Be06}, Chapter V.
This construction is intrinsic and does not depend on ``charts''.
Therefore we may choose $o^- :=b$ in order to simplify calculations
(the first formula from the claim then follows from the second one).
Then $U_{ab}=C_a \cap C_{o^-}=C_a \cap \bA^+$, and according
to \cite{BeKi09}, Section 1.4 we have the following ``affine picture''
of the group $(U_{ab},o)$: if,
under the isomorphism \eqnref{affine}, $a$ corresponds to the element
$A \in \bA^-=\Hom_\bB(o^-,o^+)$, then
 $U_{ab}$ corresponds to the set
\begin{equation}
U_{A0}=\{ X \in \Hom_\bB(o^+,o^-) | \,
1 - AX \, \mbox{is invertible in } \, \End_\bB(o^+)  \}
\label{U}
\end{equation}
with group law given by the product $Z \cdot_A X$ defined in the Introduction:
\begin{equation}
X \cdot Z = X + Z - ZAX . \eqnlabel{product}
\end{equation}

Since Formulas (\ref{U}) and
\eqnref{product} are algebraic, we may now determine explicitly the
\emph{tangent group}  of  $U_{0A}$ via
scalar extension by dual numbers: the operator
$$
1 - (A + \eps A')(X+\eps X') = 1 - AX + \eps(A'X + AX')
$$
is invertible iff so is
$1 - AX$, hence the tangent bundle $T(U_{0A})$ is
$U_{0A} \times \eps \Hom_\bB(o^+,o^-)$, with semidirect product group
structure
\begin{equation}
(X,\eps X') \cdot (Z,\eps Z') = \bigl(X + Z - ZAX, \eps (X'+Z'+
Z'AX + ZAX')\bigr) . \eqnlabel{product'}
\end{equation}
Repeating the construction, we obtain the second tangent bundle
$TT(U_{0A})$ by scalar extension from $\K$ to the ring
$TT\K$. As explained in \cite{Be06}, the Lie bracket
$[X,Y]$ arises from the commutator in the second tangent group via
$$
\eps_1 \eps_2 [X,Y]=
(\eps_1 X) (\eps_2 Y) (\eps_1 X)^{-1} (\eps_2 Y)^{-1} .
$$
A direct calculation, based on \eqnref{product'}, yields
$$
(\eps_1 X) (\eps_2 Y) =\eps_1 X + \eps_2 Y +
\eps_1 \eps_2 YAX,
$$
which, after a short calculation using that $(\eps_1  X)^{-1} = \eps_1  (-X)$,
$(\eps_1  Y)^{-1} = \eps_1  (-Y)$,
implies the claim.
\end{proof}

As is easily seen from the explicit formulas given above by choosing for $A$
special (idempotent) elements (cf.\  \cite{Be08b}), the groups $U_{ab}$ and their Lie
algebra have a \emph{double fibered structure}. These and related features for
symmetric spaces will be investigated in \cite{BeBi}.

\section{Construction of involutions}

\subsection{Definition of (restricted) involutions}
Whenever in a
category we have for each object $\XX$  a canonical notion of
an ``opposite object'' $\XX^{op}$, there is a natural
notion of \emph{involution}. This is the case for
groups, torsors or associative geometries.

\begin{definition}
A \emph{restricted involution} of the Grassmannian geometry
$\cX=\Gras(W)$ is a bijection $f:\cX \to \cX$ of order two and such that
\begin{enumerate}
\item
$f$ preserves transversality:
 for all $a,x \in \cX$:
$a \top x$ iff $f(a) \top f(x)$,
\item
$f$ is an isomorphism onto the opposite restricted product map:
 for all $5$-tuples $(x,a,y,b,z)$ such that $x,y,z \in U_{ab}$,
$$
f \big(\Gamma(x,a,y,b,z) \big)=
\Gamma(f x,f b,f y,f a,f z)=
\Gamma(f z,f a,f y,f b,f x).
$$
\item
$f$ induces affine maps on affine parts:
 for all $3$-tuples $(x,a,y)$ such that $x,y \top a$, and $r \in \K$,
$$
f \bigl( \Pi_r(x,a,y)) = \Pi_r\bigr(f x,f a,f y \bigl).
$$
\end{enumerate}
In other words, by (1), $f$ induces well-defined restrictions
$U_{ab} \to U_{f(a),f(b)}$ and $U_a \to U_{f(a)}$, which induce, by (2), anti-isomorphisms of
torsors $U_{ab} \to U_{f( a),f (b)}$, and by (3), isomorphisms of affine spaces
$U_a \to U_{f(a)}$.

The fixed point space $\cY:=\cX^\tau$ of an involution $\tau$
will be called the \emph{Lagrangian type geometry of $(\cX,\tau)$} (if it
is not empty).
\end{definition}

In general, nothing guarantees \emph{existence} of restricted involutions.
Before turning to the general theory (next chapter), we will show that under certain
conditions one can construct them by using bilinear or
sesquilinear forms. In these cases, $\cY$ will be indeed realized as
a geometry of Lagrangian subspaces.

\subsection{Non-degenerate forms and adjoinable pairs} \label{Sec:3.1}

We assume that our $\B$-module $W$ admits a non-degenerate
sesquilinear form
$$
\beta :W \times W \to \B.
$$
By sesquilinearity we mean $\beta(vr,w)=\overline r \beta(v,w)$,
$\beta(v,wr)=\beta(v,w)r$ for $v,w \in W$, $r\in \B$, where
$$
\B \to \B, \quad z \mapsto \overline z
$$
is some fixed involution (antiautomorphism of order $2$) of $\B$,
and non-degeneracy means that $\beta(v,W)=0$ or $\beta(W,v)=0$
implies $v=0$. Of course, for $\B=\K$ and $\overline z=z$ we
get bilinear forms.
Moreover, we assume that $\beta$ is Hermitian or
skew-Hermitian:
$$
\forall v,w \in W:\,
\beta(v,w)=\overline{\beta(w,v)},
\quad {\rm resp.} \quad \forall v,w \in W:\,
\beta(v,w)=-\overline{\beta(w,v)}.
$$
As usual, the orthogonal complement of a subset $S \subset W$
will be denoted by $S^\perp$. The orthogonal complement of a right
submodule is again a right submodule, but, unfortunately,
it is in general not true that the orthocomplementation map
$\perp:\cX \to \cX$ satisfies the properties of a (restricted) involution:
in general, it does not even preserve transversality, nor is it of order two.

\begin{definition}
A pair $(x,a) \in \cX \times \cX$ is called \emph{adjoinable} if
$W = x \oplus a$ and $W=x^\perp \oplus a^\perp$.
\end{definition}

\begin{lemma}
A pair $(x,a) \in \cX \times \cX$ is adjoinable if and only if
 the projection $P:=P^a_x$ is adjoinable; i.e., there exists a linear operator
$P^*:W \to W$ such that
\begin{equation}
\forall v,w \in W: \quad \quad \beta(v,Pw)=\beta(P^*v,w).
\label{3.2a}
\end{equation}
Moreover, in this case we have $(x^\perp)^\perp =x$
and $(a^\perp)^\perp =a$.
\end{lemma}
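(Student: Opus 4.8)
The plan is to work throughout under the standing hypothesis $W = x \oplus a$, which is common to both sides of the asserted equivalence and which is exactly what makes the projection $P = P^a_x$ (with $\ker P = a$, $\im P = x$) well-defined; what then remains to be shown is that the second transversality condition $W = x^\perp \oplus a^\perp$ is equivalent to the existence of an adjoint $P^*$ in the sense of \eqref{3.2a}. A preliminary remark will streamline the bookkeeping: since $\beta$ is Hermitian or skew-Hermitian, one has $\beta(v,s)=0 \iff \beta(s,v)=0$, so the left and right orthocomplements of any subset coincide and $S^\perp = \{v \in W \mid \beta(v,S)=0\}$ is unambiguous. I would also record at the outset the two facts used repeatedly, both immediate from non-degeneracy: the adjoint, where it exists, is unique and additive with $1^*=1$, so in particular $(1-P)^* = 1 - P^*$.

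For the direction ``$P^*$ exists $\Rightarrow$ $(x,a)$ adjoinable'' I would first check that $P^*$ is again a projection: applying \eqref{3.2a} twice gives $\beta((P^*)^2 v, w) = \beta(v, P^2 w) = \beta(v, Pw) = \beta(P^* v, w)$ for all $v,w$, whence $(P^*)^2 = P^*$ by non-degeneracy. Next I would establish the general identity $\ker T^* = (\im T)^\perp$ for adjointable $T$, via the chain $T^* v = 0 \iff \beta(T^* v, w)=0\ \forall w \iff \beta(v, Tw)=0\ \forall w \iff v \in (\im T)^\perp$ (the first equivalence uses non-degeneracy). Taking $T=P$ yields $\ker P^* = x^\perp$, and taking $T = 1-P = P^x_a$ (whose image is $a$) yields $\ker(1-P^*) = a^\perp$, i.e. $\im P^* = a^\perp$ since $P^*$ is a projection. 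As $P^*$ is a projection, $W = \im P^* \oplus \ker P^* = a^\perp \oplus x^\perp$, which is precisely the missing condition.

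For the converse, assuming $W = x^\perp \oplus a^\perp$, I would simply exhibit the adjoint as the complementary projection $P^* := P^{x^\perp}_{a^\perp}$, well-defined exactly by this decomposition, and verify \eqref{3.2a} by a two-line computation: writing $w = w_x + w_a$ along $W = x\oplus a$ and $v = v_+ + v_-$ along $W = a^\perp \oplus x^\perp$, one gets $Pw = w_x$ and $P^* v = v_+$, and the orthogonality relations $\beta(v_-, w_x)=0$ (as $v_- \in x^\perp$) and $\beta(v_+, w_a)=0$ (as $v_+ \in a^\perp$) collapse both $\beta(v,Pw)$ and $\beta(P^* v, w)$ to $\beta(v_+, w_x)$.

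Finally, for the ``moreover'' clause I would promote the one-sided adjoint to a two-sided one using the (skew-)Hermitian symmetry: applying $\beta(\cdot,\cdot) = \pm\,\overline{\beta(\cdot,\cdot)}$ to both sides of \eqref{3.2a} gives $\beta(Pw, v) = \beta(w, P^* v)$ for all $v,w$, which says exactly that $(P^*)^* = P$. Thus $P^*$ is an adjointable projection with $\ker P^* = x^\perp$ and $\im P^* = a^\perp$, and feeding $P^*$ into the two identities $\ker Q^* = (\im Q)^\perp$ and $\im Q^* = (\ker Q)^\perp$ proved above yields $(a^\perp)^\perp = (\im P^*)^\perp = \ker (P^*)^* = \ker P = a$ and $(x^\perp)^\perp = (\ker P^*)^\perp = \im (P^*)^* = \im P = x$. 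The one genuinely delicate point, and where I would be most careful, is this last step: over a noncommutative $\B$ equipped with a conjugation, an adjoint ``on the right'' need not automatically produce one ``on the left'', and it is precisely the (skew-)Hermitian hypothesis on $\beta$ that forces $(P^*)^* = P$ and hence both double-orthocomplement identities. Everything else is formal manipulation with projections together with non-degeneracy.
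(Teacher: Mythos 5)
Your proof is correct and takes essentially the same route as the paper's: in the forward direction, idempotency of $P^*$ together with $\ker P^* = x^\perp$ and $\im P^* = a^\perp$ (obtained via $1-P$) yields $W = x^\perp \oplus a^\perp$ and the identity $(P^a_x)^* = P^{x^\perp}_{a^\perp}$; conversely the adjoint is exhibited as the projection $P^{x^\perp}_{a^\perp}$ and verified directly, with the ``moreover'' clause following from $(P^*)^* = P$. The only difference is one of explicitness: you write out the direct verification and the role of the Hermitian or skew-Hermitian hypothesis in forcing $(P^*)^* = P$, both of which the paper leaves implicit.
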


\begin{proof}
Assume $P^*$ exists.  If two operators $f,g$ are
adjoinable, then we have $(gf)^*=f^*g^*$, and hence $P^*$ is
again idempotent. Moreover, the kernel of $P^*$ is
  $\ker P^* = (\im P)^\perp = x^\perp$.
Now, $P$ is adjoinable if and only if so is $Q:=1-P$, whence
 $\im P^* =\ker Q^* = a^\perp$,
and thus $W = x^\perp \oplus a^\perp$. Moreover, this shows that
\begin{equation}
(P_x^a)^* = P_{a^\perp}^{x^\perp} . \label{3.2}
\end{equation}
Reversing these arguments, we see that, if $(x,a)$
is adjoinable, equation (\ref{3.2}) defines an operator $P^*$,
and a direct check shows that then (\ref{3.2a}) holds.
Moreover, from $(P^*)^* =P$ the relations $(x^\perp)^\perp =x$
and $(a^\perp)^\perp =a$ follow.
\end{proof}

The lemma shows that, in the general case, we should not work with
the full Grassmannian, but only with its adjoinable elements.
For simplicity, let us first look at a case where the Grassmannian
is well-behaved, namely the case $W = \bB^n$:

\begin{theorem} {\bf (Construction of involutions: case of $\bB^n$)} \label{Th:3.2}
Let $W = \bB^n$ and $\cX$ be the Grassmannian of all right submodules
that admit some complementary right submodule, and let $\beta$ be
a non-degenerate Hermitian or skew-Hermitian form on $\bB$.
Then the orthocomplementation map
$$
\perp_\beta : \cX \to \cX, \quad x \mapsto x^\perp
$$
is a restricted involution of $\cX$.
\end{theorem}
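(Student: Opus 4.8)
The plan is to reduce all three defining properties of a restricted involution to statements about \emph{adjoints of projections}, feeding everything through the preceding Lemma. The engine is the operator description of the structure maps: $\Gamma(x,a,y,b,z)=M_{xabz}(y)$ with $M_{xabz}=P^a_x-P^z_b$, and $\Pi_r(x,a,y)=N(y)$ with $N=rP^x_a+P^a_x$, where in each case one applies the \emph{operator} to the \emph{submodule} $y$. Two master relations will drive the argument. First, the adjoint rule extracted from the preceding Lemma, namely that a projection with kernel $K$ and image $I$ has adjoint the projection with kernel $I^\perp$ and image $K^\perp$; by additivity this yields $M_{xabz}^*=M_{a^\perp x^\perp z^\perp b^\perp}$ and $N^*=rP^{a^\perp}_{x^\perp}+P^{x^\perp}_{a^\perp}$. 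Second, for \emph{any} adjoinable operator $g$ and any submodule $y$ one has $(g\cdot y)^\perp=(g^*)^{-1}(y^\perp)$, the right-hand side being the \emph{preimage} under $g^*$; this follows immediately from $\beta(u,gs)=\beta(g^*u,s)$ and needs no invertibility of $g$.

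First I would settle the foundational step: that $\perp$ is a well-defined bijection of $\cX$ of order two preserving transversality. This is the one place where the hypothesis $W=\bB^n$ is essential, and I expect it to be the main obstacle. On a finite free module the non-degenerate form is carried by a Gram matrix $H$, and the needed fact is that every $f\in\End_\bB(W)$ is then adjoinable, with $f^*=H^{-1}\overline{f}^{\,t}H$; in particular every projection $P^a_x$ is adjoinable. Granting this, the preceding Lemma shows that every transversal pair $(x,a)$ is adjoinable, so that $x^\perp$ and $a^\perp$ are again complemented (hence lie in $\cX$), $W=x^\perp\oplus a^\perp$, and $(x^\perp)^\perp=x$. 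This delivers order two, bijectivity, and both directions of transversality preservation at once. The difficulty is exactly this adjoinability: orthocomplementation is badly behaved on a general Grassmannian (it need not be of order two, nor preserve transversality), and the entire adjoinable-pairs formalism exists to repair this; once adjoints are available everywhere the remaining axioms are bookkeeping.

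Granting adjoinability, property (2) is a direct computation. For an admissible tuple ($x,y,z\in U_{ab}$) the operator $M_{xabz}$ is adjoinable and invertible, so $\perp\bigl(\Gamma(x,a,y,b,z)\bigr)=(M_{xabz}y)^\perp=(M_{xabz}^*)^{-1}(y^\perp)=(M_{a^\perp x^\perp z^\perp b^\perp})^{-1}(y^\perp)$. Applying the inversion formula \eqref{1.3} and the symmetries \eqref{1.2} identifies $(M_{a^\perp x^\perp z^\perp b^\perp})^{-1}$, up to an overall sign, with $M_{x^\perp b^\perp a^\perp z^\perp}$; since a sign is invisible on submodule-images, this equals $\Gamma(x^\perp,b^\perp,y^\perp,a^\perp,z^\perp)=\Gamma(fx,fb,fy,fa,fz)$, the required anti-isomorphism identity. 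The second equality in (2) is then the opposite-torsor symmetry of Theorem \thmref{geom_props}(ii).

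Finally, property (3). Here $r\in\bK$ need not be invertible, so I avoid inverting $N$ and argue geometrically. Writing $N'=rP^{x^\perp}_{a^\perp}+P^{a^\perp}_{x^\perp}$ for the operator defining $\Pi_r$ on the orthocomplemented data, a short check gives $N^*N'=N'N^*=r\cdot\id$. Since $y\top a$ forces $y^\perp\top a^\perp$, the complement $y^\perp$ of $a^\perp$ is the graph of a homomorphism $\phi\colon x^\perp\to a^\perp$ relative to $W=x^\perp\oplus a^\perp$, and one verifies that both the preimage $(N^*)^{-1}(y^\perp)$ and the image $N'(y^\perp)$ equal the graph of $r\phi$. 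Hence $\perp(\Pi_r(x,a,y))=(N(y))^\perp=(N^*)^{-1}(y^\perp)=N'(y^\perp)=\Pi_r(x^\perp,a^\perp,y^\perp)$ for every scalar $r$, invertible or not. Compatibility with the affine addition $x+_y z=\Gamma(x,a,y,a,z)$ is already the special case $a=b$ of property (2), so only the scalar action requires this separate graph argument.
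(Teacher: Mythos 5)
Your proposal is correct and follows essentially the same route as the paper: matrix adjoinability of all endomorphisms of $\bB^n$ gives order two and transversality preservation, and the identity $(Fy)^\perp=(F^*)^{-1}(y^\perp)$ applied to $F=M_{xabz}$, together with the adjoint formula $(P^a_x)^*=P^{x^\perp}_{a^\perp}$ and the symmetries \eqref{1.2}--\eqref{1.3}, gives property (2). Your only departure is property (3), where the paper simply remarks it is proved "in the same way" (citing the underlying Jordan structure), while you supply the missing detail — avoiding inversion of the non-invertible operator $rP^x_a+P^a_x$ via the preimage form of the master relation and a graph computation — which is a careful fleshing-out of the same argument rather than a different approach.
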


\begin{proof}
For $W=\B^n$,  every non-degenerate sesquilinear form is given by
$$
\beta(x,y) = \sum_{i,j=1}^n \overline x_i b_{ij} y_j
$$
with some invertible matrix $B=(b_{ij})$. By assumption, $B$ is
Hermitian or skew-Hermitian.
As can be checked by
a direct matrix calculation,
in this case every linear operator $X:W \to W$ is adjoinable,
with adjoint given by the adjoint matrix $X^*$ of $(X_{ij})$:
$$
X^* = B^{-1} \overline X^t B
$$
where $X^t$ is the transposed matrix of $X$. In particular, if $x$ is an
arbitrary complemented right-submodule of $\B^n$ with complement
$a$, then $P:=P^a_x$ is adjoinable. Thus every transversal pair
$(x,a)$ is adjoinable, and moreover
$$
x^\perp = \im(P)^\perp = \ker(P^*) .
$$
We have thus shown that the orthocomplementation map is of order
two and preserves
transversalilty. In order to prove the crucial property
\begin{equation}
\Gamma(z^\perp,a^\perp,y^\perp,b^\perp,x^\perp)=
\big( \Gamma(x,a,y,b,z) \big)^\perp
\label{inv}
\end{equation}
we observe that, for all $x \in \Gras(W)$ and all linear maps
$F:W \to W$
\begin{equation} \label{Fperp}
(Fx)^\perp = (F^*)\inv (x^\perp)
\end{equation}
(inverse image), and if $F$ is bijective, $(F^*)\inv = (F\inv)^*$ (inverse map).
We apply this to the bijective map $F = M_{xabz}$ (for $x,z \in C_{ab}$) whose inverse is
$F\inv = M_{zabx}=M_{xbaz}$ and whose adjoint can be computed using
(\ref{3.2}):  for $x,z \in C_{ab}$, the
operator $M_{xabz}$ has an adjoint given by
\begin{equation}
(M_{xabz})^* = (P^a_x - P^z_b)^* =M_{a^\perp x^\perp z^\perp b^\perp} =
-M_{x^\perp a^\perp b^\perp z^\perp} . \label{3.3}
\end{equation}
Now let $a,b \in \XX$ and
$x,y,z \in C_{ab}$. Then, with $F = M_{xabz}$,
\begin{eqnarray*}
\big( \Gamma(x,a,y,b,z) \big)^\perp & = &
\big( F(y) \big)^\perp = (F^*)\inv y^\perp
\cr
&=& M_{x^\perp a^\perp b^\perp z^\perp}\inv (y^\perp)
= M_{x^\perp b^\perp a^\perp z^\perp} (y^\perp) =
 \Gamma(x^\perp,b^\perp,y^\perp,a^\perp,z^\perp) \, .
\end{eqnarray*}
This proves (\ref{inv}).
Finally, property (3) of an involution can be proved in the same
way as (\ref{inv}) (and this property is already known since it depends only
on the underlying Jordan structure, see, e.g.,  \cite{Be04}).
\end{proof}

The cases $n=1$ and $n=2$ of the preceding result deserve special interest.
For $n=1$, we work with the form $\beta(u,v)=\overline u \, v$,
and we consider the Grassmannian of complemented right ideals
in $\B$ with involution $\ker e \mapsto \im \, \overline e$
(where $e \in \B$ is an idempotent,
$\ker e = (1-e) \B$, $\im e = e \B$).
The case $n=2$ enters in the proof
of Theorem \ref{functor2} (next chapter).

\subsection{The adjoinable Grassmannian}

As we will see in Theorem \ref{functor2}, the case $n=2$ is already
suitable to treat all seemingly more general cases.
Returning thus to the case of a general $\bB$-module $W$ with
a non-degenerate Hermitian or skew-Hermitian form $\beta$, we may
proceed as follows:
let
$$
\bA := \{ f \in \End_\bB(W) | \, \exists f^*  \in \End_\bB(W) : \,
\forall v,w \in W: \,\beta(v,fw)=\beta(f^*v,w) \}
$$
the set of all adjointable linear operators. Then $\bA$ is
a subalgebra of $\End_\bB(W)$, and $*$ is an involution on $\bA$.
Now define the \emph{adjoinable Grassmannian of $\beta$} to be
$$
\cX_\beta := \{ \im \, P | \, P \in \bA, P^2 = P \},
$$
the set of all submodules $x$ admitting  a complement $a$ such that
the projection $P:=P^a_x$ is adjointable.
(In general, not all submodules have this property --
consider e.g.\ a dense proper
subspace $x$ in a Hilbert space.)
Let $\tilde \cX :=\{  P \bA | \, P \in \bA, P^2 = P \}$
be the Grassmannian of all complemented right modules in $\bA$.
Then the map
$$
\tilde \cX \to \cX_\beta, \quad P \bA \mapsto \im P
$$
is well-defined, bijective and compatible with the structure
maps $\Gamma$. We use it to push down
 $\tau$  to an involution of $\cX_\beta$, so that we can carry
out all preceding constructions on the adjoinable Grassmannian.

\section{Groups and torsors associated to involutions}

We assume, for all of this chapter, that  $\tau:\XX \to \XX$ is a restricted
involution
of the Grassmannian geometry $\XX = \Gras_\BB(W)$ and write
$\cY$ for its fixed point space.
There are two different ways to construct groups and torsors
associated to $(\cX,\tau)$. Here is the first construction,
which simply mimics the usual definition of unitary and orthogonal
groups:

\begin{definition}
Fix three points $a,o,b \in \cY$ such that $o \in U_{ab}$, considered
as origin  in the group $(U_{ab},o)$,
and let $x^{-1}:=M_{oabo}(x)$ be inversion in this group.
Then $\tau$ induces an antiautomorphism of this group:
\begin{eqnarray*}
\tau(xy) &  =&  \tau \Gamma(x,a,o,b,y)=
\Gamma(\tau(y),\tau(a),\tau(o),\tau(b),\tau(x)) \cr
&=&
\Gamma(\tau(y),a,o,b,\tau(x))=
\tau(y)\tau(x),
\end{eqnarray*}
and hence
$$
\UU(\tau;a,o,b):=\{ x \in U_{ab} \vert \, \tau(x) = x^{-1} \}
$$
is a subgroup, called the \emph{$\tau$-unitary group (located at $(a,o,b)$)}.
\end{definition}

This group is not a subset of the Lagrangien geometry $\cY$,
but rather is ``tangent" to the ``antifixed space of $\tau$": indeed, the
differential of inversion at $o$ is the negative of the identity,
and hence the tangent space of $\UU(\tau;a,o,b)$ at the identity
should be the minus one eigenspace of $\tau$. This will be
made precise below (Theorem \ref{conjug}). Next, we describe a second construction
of groups having the advantage that it directly leads to torsors
living in the Lagrangian geometry:

\begin{lemma}\label{clue} Let $\tau:\XX \to \XX$ be a restricted
involution
of the Grassmannian geometry $\XX = \Gras_\BB(W)$
and denote by $\YY:=\XX^\tau$ its Lagrangian type geometry.
Then
\begin{enumerate}[label=\roman*\emph{)},leftmargin=*]
\item
 for any $a \in \XX$,
$\tau$ induces a torsor-automorphism of the torsor
$U_{a,\tau(a)}$. In particular, the fixed point set
$$
\GG(\tau;a) := (U_{a,\tau(a)})^\tau =U_{a,\tau(a)} \cap \YY
$$
is a subtorsor of $U_{a,\tau(a)}$.
\item
As a set,
$\GG(\tau, a)= U_{a} \cap \YY$.
\item
$\GG(\tau , \tau(a))$ is the opposite torsor of $\GG(\tau,a)$.
If $a  \in \cY$, then the torsor $\GG(\tau,a)$ is abelian, and it
is the underlying additive torsor of an affine space over
$\K$.
\end{enumerate}
\end{lemma}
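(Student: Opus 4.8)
The plan is to extract all three parts from the three defining conditions of a restricted involution together with Theorem~\ref{geom_props}.

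For part i) I first record that $\tau$ maps $U_{a,\tau(a)}$ bijectively to itself: by condition (1) and $\tau^2=\id$, $\tau$ carries the underlying set $C_a\cap C_{\tau(a)}$ onto $C_{\tau(a)}\cap C_a$, which is the same set. The point is then to read condition (2) with $b=\tau(a)$. Since the product in $U_{cd}$ is $(uvw)_{cd}=\Gamma(u,c,v,d,w)$, condition (2) says
$$
\tau\bigl((xyz)_{a,\tau(a)}\bigr)=\Gamma(\tau z,\tau a,\tau y,a,\tau x)=(\tau z,\tau y,\tau x)_{\tau(a),a},
$$
so $\tau$ is an \emph{anti}-isomorphism $U_{a,\tau(a)}\to U_{\tau(a),a}$. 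By Theorem~\ref{geom_props}~ii) the target is the opposite torsor of $U_{a,\tau(a)}$, and an anti-isomorphism onto the opposite torsor is an honest automorphism: the anti-isomorphism reverses the ternary product once and passing to the opposite product reverses it again, so $\tau\bigl((xyz)\bigr)=(\tau x,\tau y,\tau z)$ inside $U_{a,\tau(a)}$. A torsor automorphism sends fixed points to fixed points, whence the fixed set $\GG(\tau;a)=(U_{a,\tau(a)})^\tau=U_{a,\tau(a)}\cap\YY$ is a subtorsor.

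Part ii) reduces to a transversality equivalence. For $x\in\YY$ we have $\tau(x)=x$, so condition (1) gives $a\top x\iff\tau(a)\top\tau(x)\iff \tau(a)\top x$; hence on $\YY$ the conditions $x\in C_a$ and $x\in C_{\tau(a)}$ coincide, and therefore
$$
\GG(\tau;a)=C_a\cap C_{\tau(a)}\cap\YY=C_a\cap\YY=U_a\cap\YY .
$$
For part iii) I combine the two observations. Applying the same equivalence to $\tau(a)$ shows $\GG(\tau;\tau(a))=C_{\tau(a)}\cap\YY=C_a\cap\YY=\GG(\tau;a)$ as sets, while Theorem~\ref{geom_props}~ii) identifies the product of $U_{\tau(a),a}$ with the opposite of that of $U_{a,\tau(a)}$; restricting to this common fixed set makes $\GG(\tau;\tau(a))$ the opposite torsor of $\GG(\tau;a)$. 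When $a\in\YY$ we have $\tau(a)=a$, so $U_{a,\tau(a)}=U_a$ is commutative and is the additive torsor of a $\K$-affine space by Theorem~\ref{geom_props}~ii)--iii); its subtorsor $\GG(\tau;a)$ is therefore abelian. To see it is itself affine I note that, by condition (3) (for the scalar action $\Pi_r$) together with the automorphism property from part i) (which respects $+_y$), $\tau$ restricts to an affine involution of $U_a$. Assuming the fixed set is nonempty, I choose a fixed point as origin, turning $U_a$ into a $\K$-module on which $\tau$ is a $\K$-linear involution; the fixed set is then $\ker(\tau-\id)$, a submodule, hence an affine subspace.

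The main obstacle is the bookkeeping in part i): one must keep straight in which torsor ($U_{a,\tau(a)}$ or $U_{\tau(a),a}$) each ternary product is being evaluated, and verify carefully that an anti-isomorphism onto the opposite torsor is precisely an automorphism of the original. Everything else follows directly from Theorem~\ref{geom_props} and the transversality condition (1).
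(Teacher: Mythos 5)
Your proof is correct and follows essentially the same route as the paper's: property (2) of a restricted involution combined with the symmetry $\Gamma(x,a,y,b,z)=\Gamma(z,b,y,a,x)$ from Theorem~\ref{geom_props}~ii) gives that $\tau$ is a torsor automorphism of $U_{a,\tau(a)}$ (your "anti-isomorphism onto the opposite torsor" phrasing is just a repackaging of the paper's two-step computation), and parts ii) and iii) are handled by the same transversality and opposite-torsor arguments. Your explicit choice of a fixed point as origin to see that the fixed set is an affine subspace merely spells out what the paper asserts in one line.
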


\begin{proof} (i)
Note first that $x \in C_{a,\tau(a)}$ if and only if
$\tau(x) \in C_{\tau(a),\tau^2(a)}=C_{a,\tau(a)}$ since
$\tau$ preserves transversality and is of order $2$.
Next we show that $\tau$ preserves the torsor law
$(xyz)_a =\Gamma(x,a,y,\tau(a),z)$
of $U_{a,\tau(a)}$:
\begin{eqnarray*}
\tau (((xyz)_a) &=&
\tau(\Gamma(x,a,y,\tau(a),z))=
\Gamma(\tau z,\tau a,\tau y,a,\tau x) \cr
&=&
\Gamma(\tau x, a,\tau y,\tau a,\tau z) =
(\tau x \, \tau y \, \tau z)_a.
\end{eqnarray*}
Clearly, the fixed point space $U_{a,\tau(a)} \cap \cY$ is then a subtorsor.

(ii)  If $x \in \cY$, i.e., $\tau (x)=x$, then  $x \top a$ is equivalent
to $x \top \tau(a)$, whence
$$
\YY \cap U_a = \YY \cap U_a \cap U_{\tau(a)} =
\YY \cap U_{a,\tau(a)} .
$$

(iii) $U_{a ,\tau(a)}$ is the opposite torsor of
$U_{\tau(a), a}$. If $a=\tau(a)$, then the arguments given above show that
$\tau$ is an automorphism of order $2$ of the affine
space
$U_a$ and hence its fixed point space is an affine subspace.
\end{proof}

In order to compare both constructions, we have to to study the behaviour of involutions
with respect to basepoints.

\subsection{Basepoints, and the dual involution}
Let us fix a
 base point  $(o^+,o^-)$ in $\XX$. Recall from \cite{BeKi09}, Th.\ 1.3,
that  the middle multiplication operator  $M_{o^+ o^- o^- o^+}$
is an automorphism of $\Gamma$. By (\ref{1.3}), it is invertible
and equal to its own inverse. Moreover,
$$
M_{o^+ o^- o^- o^+} (o^\pm)= \Gamma(o^+ ,o^-,o^\pm, o^- ,o^+)=
o^\pm.
$$
Thus $M_{o^+ o^- o^- o^+}$ is a base point preserving automorphism
of the Grassmannian geometry. Its effect on the additive groups
$\bA^\pm$ is simply inversion, that is, multiplication by the scalar
$-1$.

\begin{definition}
A (restricted) involution $\tau$ of $\XX$ is called
\begin{itemize}
\item
\emph{base point preserving} if $\tau(o^+)=o^+$ and
$\tau(o^-)=o^-$, and
\item
\emph{base point exchanging} if $\tau(o^+)=o^-$ and
$\tau(o^-)=o^+$.
\end{itemize}
\end{definition}

\begin{lemma} \label{dual}
Assume $\tau$ is a base point preserving or base point exchanging
involution of $\XX$. Then $\tau$ commutes with the automorphism
 $M_{o^+ o^- o^- o^+}$, and
$$
\tau':= M_{o^+ o^- o^- o^+} \circ \tau =
\tau \circ M_{o^+ o^- o^- o^+}
$$
is again of the same type (base point preserving, resp.\  exchanging
involution) as $\tau$. \end{lemma}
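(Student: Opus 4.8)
The plan is to first establish the commutation relation $M\circ\tau=\tau\circ M$ (writing $M:=M_{o^+o^-o^-o^+}$ for brevity) and then to read off the involution properties of $\tau'=M\circ\tau=\tau\circ M$ formally from those of $M$ and $\tau$. Recall from the paragraph preceding the statement that $M$ is a base point preserving automorphism of $\Gamma$ with $M^2=\id$ and $M(o^\pm)=o^\pm$. The computational input I need is that $M$ is expressed through $\Gamma$ on the two affine charts: for $x\top o^-$ one has $M(x)=\Gamma(o^+,o^-,x,o^-,o^+)$, and for $x\top o^+$ one has $M(x)=\Gamma(o^-,o^+,x,o^+,o^-)$, the latter because $M_{o^-o^+o^+o^-}=-M$ by (\ref{1.2}) while $-M$ induces the same self-map of $\cX$ as $M$ (the two differ by the global scalar $-1$, which fixes every submodule). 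This sign bookkeeping is small but is exactly what makes the base point exchanging case go through.

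For the commutation I would take $x\top o^-$ and apply $\tau$ to the first formula. Property (2) of a restricted involution gives $\tau(M(x))=\tau(\Gamma(o^+,o^-,x,o^-,o^+))=\Gamma(\tau o^+,\tau o^-,\tau x,\tau o^-,\tau o^+)$, and since $\tau$ permutes $\{o^+,o^-\}$ this equals $\Gamma(o^+,o^-,\tau x,o^-,o^+)$ in the base point preserving case and $\Gamma(o^-,o^+,\tau x,o^+,o^-)$ in the exchanging case. Because $\tau$ preserves transversality and fixes or swaps $o^-$, we have $\tau x\top o^-$ (resp.\ $\tau x\top o^+$), so in either case the right-hand side is $M(\tau x)$ by the chart formulas above. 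The symmetric computation starting from $x\top o^+$ gives the same conclusion, so $\tau\circ M=M\circ\tau$ holds on $C_{o^+}\cup C_{o^-}$. To upgrade this to all of $\cX$ one must control the submodules transversal to neither $o^+$ nor $o^-$; here I would argue that an automorphism of the geometry that fixes $o^+,o^-$ and is the identity on both charts $C_{o^+}$ and $C_{o^-}$ is necessarily the identity, since a submodule is pinned down by its transversality relations to the elements of these charts. Applying this to the automorphism $M\circ\tau\circ M\circ\tau$ of $\Gamma$, which is the identity on $C_{o^+}\cup C_{o^-}$ and fixes $o^\pm$, yields $M\circ\tau\circ M\circ\tau=\id$, hence $\tau\circ M\circ\tau=M$ and thus $M\circ\tau=\tau\circ M$ globally. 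I expect this extension step to be the main obstacle, since it is the one place where the explicit chart computation does not by itself suffice.

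Granting the commutation, the remaining assertions are formal. The map $\tau'=M\circ\tau=\tau\circ M$ is a bijection, and $(\tau')^2=(M\circ\tau)\circ(\tau\circ M)=M\circ\tau^2\circ M=M^2=\id$, so $\tau'$ has order two. It preserves transversality because both $M$ and $\tau$ do. It is an anti-automorphism of $\Gamma$: for an admissible tuple, property (2) for $\tau$ followed by the automorphism property of $M$ gives $\tau'(\Gamma(x,a,y,b,z))=M(\Gamma(\tau x,\tau b,\tau y,\tau a,\tau z))=\Gamma(\tau' x,\tau' b,\tau' y,\tau' a,\tau' z)$, which is precisely condition (2) for $\tau'$; condition (3) follows in the same way, since $M$, being an automorphism of the geometry, preserves the maps $\Pi_r$. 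Thus $\tau'$ is again a restricted involution. Finally its type is read off from its action on the base point: in the base point preserving case $\tau'(o^\pm)=M(\tau(o^\pm))=M(o^\pm)=o^\pm$, while in the exchanging case $\tau'(o^+)=M(o^-)=o^-$ and $\tau'(o^-)=M(o^+)=o^+$, so $\tau'$ is of the same type as $\tau$, as claimed.
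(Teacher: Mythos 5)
Your opening computation and your closing paragraph are, taken together, exactly the paper's proof: the paper compresses them into the single conjugation identity $\tau\circ M_{o^+o^-o^-o^+}\circ\tau=M_{\tau o^+,\tau o^-,\tau o^-,\tau o^+}=M_{o^+o^-o^-o^+}$, obtained from property (2) of a restricted involution together with the symmetry $M_{xabz}=-M_{axzb}$ of (\ref{1.2}) (which, as you correctly note, induces the same self-map of $\cX$ since the scalar $-1$ fixes every submodule), and it then reads off the order two, the antiautomorphism property, and the type from $M_{o^+o^-o^-o^+}(o^\pm)=o^\pm$, just as you do.

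The genuine gap is your middle step. The pinning principle you invoke --- that an automorphism of the restricted geometry fixing $o^\pm$ and equal to the identity on $C_{o^+}\cup C_{o^-}$ must be the identity, ``since a submodule is pinned down by its transversality relations to the elements of these charts'' --- is false. All elements of $C_{o^+}\cup C_{o^-}$ are complements of $o^+$ or of $o^-$; for instance in $\Gras(\K^{2n})$ with $\dim o^\pm=n$ they all have dimension $n$, so a subspace of any other dimension is transversal to \emph{no} element of either chart, and its (empty) transversality profile determines nothing; over a general ring $\bB$, non-complemented submodules cause the same failure. Indeed the principle admits explicit counterexamples: let $\psi$ act by a self-adjoint $g\in\Gl(W)$ on submodules of dimensions $k$ and $2n-k$ (with $k\neq n$) and by the identity on all other submodules; since transversal pairs and admissible $5$-tuples only ever involve two complementary dimensions, $\psi$ is an automorphism of the restricted geometry that is the identity on the entire middle-dimensional component yet is not the identity. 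So no argument of this kind can succeed from the restricted axioms alone. Note that the paper never attempts this globalization: its proof, and every subsequent use of the lemma (Theorems \ref{conjug} and \ref{unitalMaintheorem}, the Cayley transform), stays within the range of the admissibility conditions of property (2), i.e.\ precisely within the chart computation you carried out first. The correct repair is therefore to delete the extension step and read the commutation (hence $(\tau')^2=\id$, the antiautomorphism property, and the type statement) as identities of maps where property (2) applies, as the paper implicitly does; literal global commutation on all of $\cX$ requires stronger input, e.g.\ that $\tau$ is a global involution in the sense of Chapter 5, or that $\tau$ is an orthocomplementation as in Theorem \ref{Th:3.2}, where commutation with the linear operator $M_{o^+o^-o^-o^+}$ can be checked directly on $W$.
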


\nin We call $\tau'$ the  \emph{dual involution} (denoted
by $-\tau$ in a context where $(o^+,o^-)$ is fixed).

\begin{proof}
Thanks to the symmetry relation
$M_{xabz}=M_{axzb}$ we get in either case
$$
\tau \circ  M_{o^+ o^- o^- o^+} \circ \tau =  M_{\tau o^+,\tau o^-,\tau o^-, \tau o^+} =
M_{o^+ o^- o^- o^+}.
$$
Therefore $\tau'$ is again of order $2$, and it is an antiautomorphism
having the same effect on $o^\pm$ as $\tau$ since  $M_{o^+ o^- o^- o^+}$
is base point preserving.
\end{proof}

Recall from \cite{BeKi09} that, with respect to a fixed base point $(o^+,o^-)$
and $a \in \bA^-$,
$$
\tilde t_a : = M_{o^+ a o^- o^+} \circ M_{o^+ o^- o^- o^+}  =
M_{a o^+ o^+ o^-} \circ  M_{o^- o^+ o^+ o^-}
=L_{a o^+ o^- o^+}
$$
is the (left) translation operator defined by $a$
 in the abelian group $U_{o^+} \cong \bA^-$.
It acts rationally on $\bA^+$ by  the so-called \emph{quasi inverse map}.

\begin{theorem}\label{conjug}
Assume $\tau$ is a base point preserving involution of $\XX$ and let
$a \in \cY \cap U_{o^-} = (\bA^+)^\tau$.
Then the groups $\GG(-\tau;a)$ and $\UU(\tau;2a,o^+,o^-)$ are isomorphic
(the multiple $2a=a+a$ taken in $\bA^+$).
An isomorphism is induced by $\tilde t_a$.
\end{theorem}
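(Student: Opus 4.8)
The plan is to present both groups as fixed-point sets of involutions and then to show that the automorphism $\tilde t_a$ matches the data defining $\GG(-\tau;a)$ with the data defining $\UU(\tau;2a,o^+,o^-)$. First I would rewrite the two groups in a common language. By Lemma \ref{clue}, $\GG(-\tau;a)=U_{a,(-\tau)(a)}\cap\XX^{-\tau}$ is the fixed-point torsor of the dual involution $-\tau=M_{o^+o^-o^-o^+}\circ\tau$; since $\tau(a)=a$ and $M_{o^+o^-o^-o^+}$ acts on $\bA^+$ as multiplication by $-1$, the second parameter is $(-\tau)(a)=-a$ (additive inverse in $\bA^+$). On the other side, in $(U_{2a,o^-},o^+)$ inversion is $x^{-1}=M_{o^+,2a,o^-,o^+}(x)$, so the defining condition $\tau(x)=x^{-1}$ is equivalent, since $M_{o^+,2a,o^-,o^+}$ is its own inverse by \eqref{1.3}, to $(M_{o^+,2a,o^-,o^+}\circ\tau)(x)=x$. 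Writing $\kappa:=M_{o^+,2a,o^-,o^+}\circ\tau$, the conjugation formula coming from condition (2) in the definition of a restricted involution gives $\tau\circ M_{o^+,2a,o^-,o^+}=M_{o^+,2a,o^-,o^+}\circ\tau$ (because $\tau$ fixes $o^+$, $o^-$ and $2a$), whence $\kappa^2=\id$ and $\UU(\tau;2a,o^+,o^-)=U_{2a,o^-}\cap\XX^{\kappa}$ is again the fixed-point set of an involution.

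Next I would bring in $\tilde t_a$. Being a product of middle-multiplication automorphisms, $\tilde t_a$ is an automorphism of $(\XX,\Gamma)$, and the same conjugation formula shows that it commutes with $\tau$ (as $\tau$ fixes $o^+,o^-,a$). I would realize each of the two groups inside $\Aut(\XX,\Gamma)$ through its left-translation action and then aim to show that conjugation by $\tilde t_a$ carries the translations of $\GG(-\tau;a)$ onto those of $\UU(\tau;2a,o^+,o^-)$. The key structural point, and the visible source of the factor $2$, is that $\tilde t_a$ transports the canonical base-point symmetry $M_{o^+o^-o^-o^+}$ (which underlies $-\tau$) to the central symmetry based at the translated point $\tilde t_a(o^+)=a$; evaluated at $o^+$ this symmetry returns $2a=a+a$, which is exactly the parameter appearing in $\UU$. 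The plan is then to verify that, tracked through this conjugation, the data $(-\tau,\,U_{a,-a})$ defining $\GG$ is matched with the data $(\kappa,\,U_{2a,o^-})$ defining $\UU$; since $\tilde t_a$ respects $\Gamma$, such a matching yields a torsor, and hence a group, isomorphism.

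The hard part will be the explicit middle-multiplication bookkeeping behind the previous paragraph: computing the conjugate of $M_{o^+o^-o^-o^+}$ by $\tilde t_a$ in closed form and matching it, with the correct signs and the doubling $a\mapsto 2a$, to the inversion operator $M_{o^+,2a,o^-,o^+}$ defining $\UU$. A second subtlety is that $\GG(-\tau;a)$ is realized as a set of points lying in the Lagrangian-type geometry $\XX^{-\tau}$, whereas $\UU(\tau;2a,o^+,o^-)$ is realized affinely and is only \emph{tangent} to the antifixed space of $\tau$; one must therefore check that ``induced by $\tilde t_a$'' really produces a genuine isomorphism relating the two realizations and not merely an abstract one, which forces one to handle the rational (quasi-inverse) action of $\tilde t_a$ on the relevant affine chart. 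All of this rests on the middle-multiplication calculus recalled from Part I in Section \ref{sec:1.1}.
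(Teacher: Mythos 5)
Your overall strategy is the paper's own: exhibit $\GG(-\tau;a)$ as the fixed-point set of $\tau'=M_{o^+o^-o^-o^+}\circ\tau$ in $U_{a,-a}$, exhibit $\UU(\tau;2a,o^+,o^-)$ as the fixed-point set of $i_{2a}\tau$ (where $i_{2a}:=M_{o^+\,2a\,o^-o^+}$ is inversion at $o^+$) in $U_{2a,o^-}$, and show that conjugation by $\tilde t_a$ carries one pair to the other; your observation that the factor $2$ arises because $\tilde t_a$ transports the base-point symmetry $-\id=M_{o^+o^-o^-o^+}$ to the point symmetry centered at $a$ is exactly the paper's identity $i_{2a}\circ(-\id)=\tilde t_{2a}$. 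The genuine gap is in how you set up the translation, and it prevents the conjugation from closing. You place $a$ in $\bA^+=C_{o^-}$ and accordingly take $\tilde t_a$ to be translation in the affine part with origin $o^+$, so that $\tilde t_a(o^+)=a$ and $\tilde t_a(o^-)=o^-$. But then, since $\tilde t_a$ is an automorphism, $\tilde t_a(U_{a,-a})=U_{\tilde t_a(a),\,\tilde t_a(-a)}=U_{2a,\,o^+}$, \emph{not} $U_{2a,o^-}$, which is where $\UU(\tau;2a,o^+,o^-)$ lives; moreover your $\tilde t_a$ moves the prospective common unit $o^+$ instead of preserving it, and with $a\in\bA^+$ the group $(U_{2a,o^-},o^+)$ need not even be defined, since $o^+\top 2a$ is not automatic (take $a=o^+$). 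So with your conventions the two fixed-point sets sit in torsors that do not correspond under $\tilde t_a$, and no bookkeeping can match them; what your conjugation actually yields is the \emph{dual} statement $\GG(-\tau;a)\cong\UU(\tau;2a,o^-,o^+)$ with unit $o^-$. The resolution, which is what the paper's proof does (the hypothesis ``$\cY\cap U_{o^-}=(\bA^+)^\tau$'' in the statement is a slip, as the proof's own computation ``sums in $(\bA^-,o^-)$'' shows), is to take $a\in\cY\cap U_{o^+}=(\bA^-)^\tau$ and $\tilde t_a=M_{o^+ao^-o^+}\circ M_{o^+o^-o^-o^+}$, translation in $U_{o^+}\cong\bA^-$: this fixes $o^+$ and sends $o^-\mapsto a$, $a\mapsto 2a$, $-a\mapsto o^-$, hence maps $U_{a,-a}$ onto $U_{2a,o^-}$ preserving the common unit $o^+$.

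Once the conventions are straightened out, the step you defer as ``the hard part'' is a two-line computation rather than heavy bookkeeping: by the very definition of $\tilde t$, one has $i_{2a}\circ(-\id)=M_{o^+\,2a\,o^-o^+}\circ M_{o^+o^-o^-o^+}=\tilde t_{2a}$, so the symmetry at $a$ is $\tilde t_a\circ(-\id)\circ\tilde t_a^{-1}=\tilde t_{2a}\circ(-\id)=i_{2a}$; combining this with your (correct) commutation $\tau\circ\tilde t_a=\tilde t_a\circ\tau$, valid because $\tau(a)=a$, gives $\tilde t_a\circ\tau'\circ\tilde t_a^{-1}=i_{2a}\tau$, and the theorem follows by restricting to fixed points. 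Finally, the worry in your last paragraph is a red herring: $\UU(\tau;2a,o^+,o^-)$ is by definition a set of points of $U_{2a,o^-}\subset\cX$, just as $\GG(-\tau;a)\subset U_{a,-a}\subset\cX$, so no passage through affine charts or the quasi-inverse action is needed, and the detour through left-translation actions inside $\Aut(\XX,\Gamma)$ is likewise unnecessary.
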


\begin{proof}
Having fixed the base point, we use the notation $- \id:= M_{o^+ o^- o^- o^+}$.
We have to show that the group $U_{a,-a}$ with its automorphism $\tau'$ is conjugate
to the group $U_{2a,o^-}$ with its automorphism $i_{2a} \tau$ where $i_{2a}:= M_{o^+ 2a \, o^- o^+}$
 is inversion in the group $(U_{2a,o^-},o^+)$.
 First of all,
 $$
 \tilde t_a(a)=a+a=2a, \quad \tilde t_a(-a)=a+(-a)=o^-
 $$
 (sums in $(\bA^-,o^-)$), hence $\tilde t_a$ induces a torsor isomorphism from
 $U_{a,-a}$ onto $U_{2a,o^-}$ preserving the base point $o^+$.
 Next, observe  that
$$
i_{2a} \circ (-\id )  =  M_{o^+ 2a o^- o^+} \circ  M_{o^+ o^- o^- o^+} =
\tilde t_{2a}
$$
whence, using that $\tau' \circ \tilde t_a = \tilde t_{\tau' a} \circ \tau' = \tilde t_{-a} \circ \tau'$,
$$
\tilde t_{-a} \circ i_{2a} \tau \circ \tilde t_{a} =
\tilde t_{-a} \circ \tilde t_{2a} \circ (-\id) \circ  \tau \circ \tilde t_{a} =
\tilde t_{-a} \circ \tilde t_{2a}  \tau' \circ \tilde t_{a} =
\tilde t_{-a}  \tilde t_{2a}  \tilde t_{-a} \circ  \tau'
= \tau'
$$
where the last equality follows from  the relation $\tilde t_b \tilde t_c = \tilde t_{b+c}$.
\end{proof}

In the affine chart $\bA^+$, $\tilde t_a$ acts as a birational map, transforming the
affine realization $\UU(\tau;2a,o^+,o^-)$ to a rational realization that is
Zariski-dense in $(\bA^+)^{-\tau}$.
If $2$ is invertible in $\K$, all $\tau$-unitary groups $\UU(\tau; b,o,c)$
have such a realization $\GG(\tau';a)$  (just choose the base point $(o^+,o^-)=(o,c)$ and let
$a:= b/2$). If $2$ is not invertible in $\K$, such a realization is not always possible.

Concerning \emph{involutions of associative pairs} and \emph{associative triple systems}, to be used in the
following result, see Appendix A.

\begin{theorem}\label{functor1}
Assume $\tau$ is a restricted involution of the Grassmannian geometry
$\XX $, and let
$(\AAA^+,\AAA^-)$ be the associative pair corresponding
to a base point $(o^+,o^-)$.
\begin{enumerate}[label=\roman*\emph{)},leftmargin=*]
\item
If $\tau:\XX \to \XX$ is base-point preserving,
 then by restriction $\tau$ induces $\K$-linear
maps $\tau^\pm:\AAA^\pm \to \AAA^\pm$ which form a type
preserving involution of $(\AAA^+,\AAA^-)$.
\item
If $\tau:\XX \to \XX$ is base-point exchanging,
 then by restriction $\tau$ induces $\K$-linear
maps $\tau^\pm:\AAA^\pm \to \AAA^\mp$ which form a type
exchanging involution of $(\AAA^+,\AAA^-)$.
In this case $\AAA:=\AAA^+$
becomes an associative triple system of the second kind when
equipped with the product
$$
\langle xyz\rangle:= \Gamma(x,o^+,\tau(y),o^-,z).
$$
\item
Assume $\tau:\XX \to \XX$ is base-point preserving, and
let $a \in \cY'$ such that $o^+ \top a$ (i.e.,
$a \in \bA^-$ and $\tau(a)=-a$).
Then the Lie algebra of the group $(\GG(\tau;a),o^+)$
is the space $(\AAA^+)^{\tau^+}$ with Lie bracket
$$
[x,z]_a = 2 (\langle xaz\rangle - \langle zax\rangle).
$$
\end{enumerate}
\end{theorem}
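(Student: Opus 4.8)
The plan is to treat the three parts in turn, reducing each to structure already established: the torsor and affine structure of Theorem \ref{geom_props}, the involution properties (1)--(3), the opposite-torsor identity $\Gamma(x,a,y,b,z)=\Gamma(z,b,y,a,x)$, Lemma \ref{clue}, and the Lie-algebra computation of Lemma \ref{Liealgebra}. For (i), since $\tau$ preserves transversality (property (1)) and fixes $o^\pm$, it carries $C_{o^\mp}=\AAA^\pm$ onto itself, so the restrictions $\tau^\pm$ are well defined and of order two. $\K$-linearity I would read off from properties (2) and (3) evaluated at the fixed slots $a=b=o^\mp$ and fixed origins $o^\pm$: property (2) gives additivity $\tau(x+_{o^+}z)=\tau x+_{o^+}\tau z$, and property (3) gives homogeneity $\tau(\Pi_s(o^+,o^-,y))=\Pi_s(o^+,o^-,\tau y)$. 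The reversal (involution) property is then the one-line computation, by property (2) followed by the opposite-torsor identity,
\[
\tau^+\langle u,v,w\rangle^+=\Gamma(\tau u,o^-,\tau v,o^+,\tau w)=\langle \tau^+ w,\tau^- v,\tau^+ u\rangle^+,
\]
which is exactly the type-preserving involution axiom of Appendix A.

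For (ii) the same computation goes through verbatim, except that $\tau$ now exchanges $o^+$ and $o^-$, hence carries $\AAA^\pm$ onto $\AAA^\mp$ and yields $\tau^-\langle u,v,w\rangle^+=\langle\tau^- w,\tau^+ v,\tau^- u\rangle^-$, a type-exchanging involution. Writing $\langle xyz\rangle=\Gamma(x,o^+,\tau y,o^-,z)=\langle x,\tau^+y,z\rangle^+$ (legitimate since $\tau^+y\in\AAA^-$), I would then verify the associative-triple-system-of-the-second-kind identities by substituting this definition into the associative-pair identities for $(\AAA^+,\AAA^-)$ and using the reversal rule just proved to push $\tau$ across the inner products.

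For (iii), note that $\GG(\tau;a)=U_{a,\tau(a)}\cap\cY$ with $\tau(a)=-a$ is, by Lemma \ref{clue}(i), a subtorsor of $U_{a,-a}$ with base point $o^+$. Applying Lemma \ref{Liealgebra} to $U_{a,\tau(a)}$ with origin $o^+$ and auxiliary transversal $o^-$, the parameters $a,\tau(a)$ lie in $\AAA^-=C_{o^+}$ with $a-\tau(a)=2a$, so the Lie algebra of $(U_{a,-a},o^+)$ is $\AAA^+$ with bracket $x(2a)z-z(2a)x=2(\langle xaz\rangle-\langle zax\rangle)$, using trilinearity of the pair product in its middle slot. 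Since $\tau$ is a torsor automorphism of $U_{a,\tau(a)}$ fixing $o^+$, it commutes with the scalar extensions to $T\K$ and $TT\K$ used in that lemma, so its differential at $o^+$ — the map $\tau^+$ of part (i) — is an automorphism of this bracket; hence the fixed-point group $\GG(\tau;a)=(U_{a,-a})^\tau$ has Lie algebra the fixed-point subspace $(\AAA^+)^{\tau^+}$ with the restricted bracket, which is the assertion.

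The main obstacle I anticipate is part (ii): once $\tau^\pm$ are in hand, matching $\langle xyz\rangle=\langle x,\tau^+y,z\rangle^+$ against the precise axioms of an associative triple system of the second kind requires careful tracking of which arguments are reversed by $\tau$, and is the only place needing a genuine multi-step identity verification rather than a one-line transport of structure. The single delicate point in (iii) is the standard fact that, in the dual-number formalism, the tangent algebra of a fixed-point subgroup equals the fixed-point subalgebra of the differential; this is clean precisely because $\tau$ is assumed to commute with scalar extension.
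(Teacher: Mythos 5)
Your proposal is correct and takes essentially the same route as the paper: parts (i)--(ii) are obtained by transporting structure through the involution axioms (the paper invokes the functoriality theorem of Part I and then checks the second-kind triple identity directly in terms of $\Gamma$, using property (2) and para-associativity, whereas you package the same computation at the level of the associative pair), and part (iii) is exactly the paper's application of Lemma~\ref{Liealgebra} with $b=\tau(a)=-a$, so that $a-b=2a$. The only substantive additions on your side are cosmetic: the explicit derivation of $\K$-linearity from axioms (2)--(3) and the explicit fixed-point/scalar-extension argument in (iii), both of which the paper leaves implicit (and beware only of the superscript bookkeeping in (ii), where the paper's own Appendix A conventions are easy to misread).
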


\begin{proof}
(i), (ii):
All claims are simple applications of the functoriality of associating
an associative pair  to an
associative geometry with base pair, \cite{BeKi09},
Theorem 3.5.
For convenience, let us just spell out the computation proving the property of an
associative triple system in part ii):
\begin{eqnarray*}
\langle u\langle xyz\rangle w\rangle & =&\Gamma\Bigl(u,o^+,\tau\bigl(\Gamma(x,o^+,\tau(y),o^-,z)\bigl),
o^-,w\Bigr)
\cr
& = &\Gamma\Bigr(u,o^+,\Gamma(\tau z,\tau o^+,y,\tau o^-,\tau x),o^-,w\Bigr)
\cr
& = &\Gamma\Bigr(u,o^+,\Gamma(\tau z,o^-,y,o^+,\tau x),o^-,w\Bigr)
\cr
& = &   \Gamma\Bigr(\Gamma(u,o^+,\tau z,o^-,y),o^+,\tau x,o^-,w\Bigr)
\, \,  =  \, \,  \langle\langle uzy\rangle xw\rangle
\end{eqnarray*}
(If we had used a base point preserving \emph{automorphism} instead
of an involution, a similar calculation shows that we would get an
associative triple system of the \emph{first} kind, see Appendix A.)

(iii):  Using Lemma \ref{Liealgebra}, with $b=\tau(a)=-a$
(since the effect of $\tau$ on $\bA^-$ is multiplication by $-1$), we
get the Lie bracket
$[x,z] = \langle x (2a) z\rangle - \langle z (2a) x\rangle$.
\end{proof}

Putting the preceding two results together, we obtain an explicit description of the
groups $\GG(-\tau;b/2) \cong \UU(\tau;b,o^+,o^-)$ in terms of the associative pair
$(\bA^+,\bA^-)$:
$$
\UU(\tau;b,o^+,o^-) = \{ x \in \bA^+ \vert \,
1-xb \, \mbox{invertible},  \tau(x) = j_b(x) \}
$$
with $j_b(x)=-(1-xb)^{-1}x$, so that the condition $-\tau(x) = j_b(x) $ is equivalent to
$x+\tau(x)= \langle xb\tau(x)\rangle$.
This formulation is valid for an arbitrary associative pair with base-point
preserving involution. In practice, all known examples arise for associative
pairs corresponding to \emph{unital} associative algebras, to be discussed next.

\subsection{Base triples, unitary groups, and Cayley transform}

Next let us assume that $W$ admits a \emph{transversal triple}
$(o^+,e,o^-)$. Then
 $W = o^+ \oplus o^-$, and saying that $e$ is transversal
to $o^+$ and $o^-$ amounts saying that $e$ is the
graph of a linear isomorphism $o^+ \to o^-$. We may consider this
isomorphism as an identification, so that $e$ becomes the diagonal
$\Delta_+$ in $W = o^+ \oplus o^- = o^+ \oplus o^+$.
Then the element
$$
-e :=  M_{o^+ o^- o^- o^+} (e)
$$
becomes the antidiagonal $\Delta_-$ in  $o^+ \oplus o^+$.
In this situation, we may let the group $\Gl(2,\K)$ act  by
block-matrices on $W  = o^+ \oplus o^+$ in the usual way.
Let $G  \subset \Gl(2,\K)$ by the group generated by
$$
\begin{pmatrix} 1 & 1 \cr 0 & 1 \end{pmatrix}, \quad
 \begin{pmatrix} \lambda & 0 \cr 0 & 1 \end{pmatrix}, \quad
 \begin{pmatrix} 0 & 1 \cr 1 & 0 \end{pmatrix}
$$
with $\lambda \in \K^\times$.
The first matrix describes  left translation by $e$,
$$
L_{eo^-o^+o^-}:= 1 - P_{o^-}^e P^{o^-}_{o^+},
$$
the second multiplication by the scalar $\lambda$,
$$
\delta_{o^+ o^-}^\lambda = \lambda P^{o^+}_{o^-} + P^{o^-}_{o^+},
$$
and the third describes a map $j$ whose
effect on the associative algebra $\bA$ is inversion:
$$
j:=M_{eo^+ o^-e} = M_{o^+eeo^-} .
$$
All of these operators are (inner) automorphisms of the geometry
$(\cX,\Gamma)$.
>From (\ref{1.1})  it follows that $j$ is an automorphism of order $2$,
but this time it exchanges the points $o^+$ and $o^-$:
$$
M_{eo^+ o^-e}(o^+)=\Gamma(e,o^+,o^+, o^-,e)
= \Gamma(o^+,e,o^+,e, o^-) = o^- .
$$
Moreover, $j(e)=M_{eo^+ o^-e}(e)=\Gamma(e,o^+,e,o^-,e)=e$.

\begin{definition}
If $(o^+,e,o^-)$ is a  transversal triple, we call $\tau$ a
\begin{itemize}
\item
 \emph{unital base point preserving involution} if
$\tau(o^+)=o^+$, $\tau(o^-)=o^-$, $\tau(e)=e$,
\item
\emph{unital base point exchanging involution} if
$\tau(o^+)=o^-$, $\tau(o^+)=o^-$, $\tau(e)=e$.
\end{itemize}
\end{definition}

\nin
Note that, if $\tau$ is of one of these two types, then the dual involution
 $\tau'$ no longer preserves $e$. Indeed,
$M_{o^+ o^- o^- o^+}(e)= -e$ is the antidiagonal, which is
different from the diagonal  (if $W$ has no $2$-torsion).
Thus the r\^oles of $\tau$ and $\tau'$ are no longer completely
symmetric in the unital case.

\begin{lemma} \label{dual'}
Assume $\tau$ is a unital base point preserving
involution of $\XX$. Then $\tau$ commutes with the automorphism
$j=M_{eo^+ o^-e}$, and
$$
\tilde \tau:= j \tau = \tau j
$$
is a unital base-point exchanging involution.
Moreover, if $2$ is invertible in $\K$,
 there exists an automorphism $\rho:\cX \to \cX$ (``the
real Cayley transform'') such that
$$
\rho \circ \tau  \circ \rho^{-1} = \tau , \quad \quad
\rho \circ \tilde \tau \circ \rho^{-1} = \tau'.
$$
\end{lemma}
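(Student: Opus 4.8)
I will handle the three assertions in turn, reading property~(2) of a restricted involution as a conjugation rule for middle multiplications. Since $\Gamma(x,a,y,b,z)=M_{xabz}(y)$, the second equality in property~(2) gives, for every admissible tuple, the operator identity
$$
\tau\circ M_{xabz}\circ\tau = M_{\tau z,\,\tau a,\,\tau b,\,\tau x}.
$$
For the first claim I apply this to $j=M_{eo^+o^-e}$ (legitimate, since $e\top o^+$ and $e\top o^-$): because $\tau$ fixes $o^+,o^-$ and $e$, the right-hand side is again $M_{eo^+o^-e}=j$, so $\tau j\tau=j$, i.e.\ $\tau$ commutes with $j$. For the second claim, $\tilde\tau=j\tau$ is the composite of the $\Gamma$-automorphism $j$ with the $\Gamma$-anti-automorphism $\tau$, hence satisfies the three defining properties of a restricted involution (it preserves transversality; it is an anti-isomorphism onto the opposite product, being automorphism-after-anti-automorphism; and it is affine on affine parts), while $\tilde\tau^2=j\tau j\tau=j^2\tau^2=\id$ using that $j,\tau$ commute and are involutive. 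Since $\tilde\tau(o^+)=j(o^+)=o^-$, $\tilde\tau(o^-)=o^+$ and $\tilde\tau(e)=j(e)=e$, it is a unital base-point exchanging involution.

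For the Cayley transform I would first reduce the two target identities to conditions on $\rho$ alone. Writing $-\id:=M_{o^+o^-o^-o^+}$, so that $\tilde\tau=j\tau$ and $\tau'=(-\id)\tau$, the requirement $\rho\tau\rho^{-1}=\tau$ says exactly that $\rho$ commutes with $\tau$; granting this,
$$
\rho\circ\tilde\tau\circ\rho^{-1}=\rho\circ j\circ\tau\circ\rho^{-1}=(\rho j\rho^{-1})(\rho\tau\rho^{-1})=(\rho j\rho^{-1})\circ\tau,
$$
so the second identity is equivalent to $\rho j\rho^{-1}=-\id$. Thus it suffices to produce an automorphism $\rho$ of $\cX$ that commutes with $\tau$ and conjugates $j$ to $-\id$.

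Using the transversal triple to identify $W=o^+\oplus o^-\cong o^+\oplus o^+$ (with $e$ the diagonal), the automorphisms $j$ and $-\id$ become the block operators
$$
j=\begin{pmatrix} 0 & \id \\ \id & 0 \end{pmatrix},\qquad -\id=\begin{pmatrix} \id & 0 \\ 0 & -\id \end{pmatrix},\qquad \rho:=j+(-\id)=\begin{pmatrix} \id & \id \\ \id & -\id \end{pmatrix},
$$
and I take $\rho$ as indicated. Then $\rho^2=2\,\id$, so $\rho$ is invertible precisely when $2\in\K^\times$, with $\rho^{-1}=\tfrac12\rho$, and it induces an automorphism of $(\cX,\Gamma)$. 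The conjugation $\rho j\rho^{-1}=-\id$ is then purely formal: from $j^2=(-\id)^2=\id$ one computes $\rho j=\id+(-\id)j=(-\id)\rho$, whence $\rho j\rho^{-1}=-\id$.

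The commutation $\rho\tau=\tau\rho$ is the crux, and is \emph{not} visible from $\rho=j+(-\id)$, because the map on the Grassmannian induced by a sum of operators is not the sum of the induced maps; so knowing that $j$ and $-\id$ each commute with $\tau$ does not suffice termwise. My remedy is to rewrite the same operator as a product of automorphisms each of which commutes with $\tau$: a short matrix computation gives
$$
\rho=n_-\circ n_+^{-1/2}\circ\delta^{-2}_{o^+o^-},
$$
where $n_+=\left(\begin{smallmatrix}\id&\id\\0&\id\end{smallmatrix}\right)$ and $n_-=jn_+j=\left(\begin{smallmatrix}\id&0\\\id&\id\end{smallmatrix}\right)$ are the two unipotent translations by $e$ (namely $L_{eo^-o^+o^-}$ and its $j$-conjugate), $n_+^{-1/2}$ is translation by $-\tfrac12 e$, and $\delta^\lambda_{o^+o^-}$ is scalar multiplication. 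Here $\delta^\lambda_{o^+o^-}(y)=\Pi_\lambda(o^+,o^-,y)$ commutes with $\tau$ by property~(3), since $\tau$ fixes $o^+,o^-$; the translation $n_+$ commutes with $\tau$ because its presentation through middle multiplications has all indices in the $\tau$-fixed set $\{o^+,o^-,e\}$, so the conjugation rule above fixes it; $n_-=jn_+j$ then commutes because $j$ and $n_+$ do; and $n_+^{-1/2}$, being a $\delta^{\lambda}_{o^+o^-}$-conjugate of $n_+$, commutes as well (this factor is where $\tfrac12\in\K$ is used). Hence $\rho$ commutes with $\tau$, and combined with $\rho j\rho^{-1}=-\id$ this yields both required identities. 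I expect the only genuine work to be this factorization step — verifying the matrix identity and that translation by $e$ really commutes with $\tau$ — and I note that the hypothesis $2\in\K^\times$ enters twice: in the invertibility of $\rho$ and in the availability of the scalar $\tfrac12$.
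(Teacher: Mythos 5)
Your proof is correct and takes essentially the same approach as the paper: the first assertion is proved by the same conjugation-rule computation, and for the Cayley transform both arguments choose an explicit matrix in the group generated by the translations, the dilations and $j$, prove $\rho\tau=\tau\rho$ by factoring $\rho$ into generators that commute with $\tau$ (each being a partial map of $\Gamma$ with $\tau$-fixed arguments), and then conjugate $\tilde\tau$ to $\tau'$ via $\rho\, j\,\rho^{-1}=M_{o^+o^-o^-o^+}$. The only deviations are in execution: your matrix $\left(\begin{smallmatrix}1&1\\1&-1\end{smallmatrix}\right)$ is $j$ composed with the paper's $R=\left(\begin{smallmatrix}1&-1\\1&1\end{smallmatrix}\right)$, your factorization $n_-\circ n_+^{-1/2}\circ\delta$ replaces the paper's $n_+\,\delta^{-2}\,j\,n_+$, and you verify the conjugation identity by a direct operator computation where the paper instead tracks the images of the $4$-tuple $(o^-,e,o^+,-e)$ under $R$.
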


\begin{proof} As in the proof of
Lemma \ref{dual}, we see that
$$
\tau j \tau = \tau M_{eo^+ o^-e} \tau =
M_{eo^+ o^-e} = j,
$$
hence $j \tau$ is of order two, and it exchanges base points
and is again an involution.

The automorphism $\rho$ is constructed as follows:
let $\rho \in G$ be given by the matrix
$$
R:= \begin{pmatrix} 1 & -1 \cr 1 & 1 \end{pmatrix} =
 \begin{pmatrix} 1 & 1 \cr 0 & 1 \end{pmatrix}
 \begin{pmatrix} -2 & 0 \cr 0 & 1 \end{pmatrix}
 \begin{pmatrix} 0 & 1 \cr 1 & 0 \end{pmatrix}
\begin{pmatrix} 1 & 1 \cr 0 & 1 \end{pmatrix} .
$$
Then $\rho$ commutes with $\tau$: indeed, $\tau$ commutes with
all generators of the group $G$ mentioned above (since
 these operators are partial maps
of $\Gamma$ involving only the $\tau$-fixed elements
$o^+,o^-,e,-e$ and hence commute with $\tau$), hence $\tau$
commutes with $R$.
Since  $R$ sends the 4-tuple
$(o^-,e, o^+,-e)$ to $(e,o^+,-e,o^-)$, it follows that
$$
\rho j \rho = \rho M_{eo^+ o^-e} \rho =
M_{o^+ (-e) e o^+} =
M_{o^+ o^- o^-  o^+}
$$
(the last equality follows since
$M_{o^+ (-a) a o^+} = M_{(-a)o^+   o^+a} =
M_{o^-o^+   o^+o^-}$
is the map $x \mapsto (-a) -x + a = -x$ for all $a \in V^-$).
Together, this implies
$$
\rho \circ \tilde \tau \circ \rho^{-1} =
\rho \circ  \tau j \circ \rho^{-1} =
\tau \rho \circ j \circ \rho^{-1} =
\tau \circ M_{o^+ o^- o^-  o^+} =
\tau'.
$$
(Note that $R$ is not uniquely determined by the property from
the lemma, but the given form corresponds of course to the
well-known ``real'' version of the Cayley transform which enjoys
further nice properties.)
\end{proof}

\begin{theorem}\label{unitalMaintheorem}\label{functor'}
Assume $\tau$ is a unital base-point preserving
 involution of the Grassmannian geometry
$(\XX; o^+,e,o^-)$, and let
$\bA = C_{o^-}$ be the corresponding unital associative algebra with origin $o^+$
and $\bA^- = C_{o^+}$ the one with origin $o^-$,
let $\tau'$ the dual involution of $\tau$, $\tilde \tau = j \tau$,
$\cY:=\cX^\tau$ and
$\cY':=\cX^{\tau'}$. Let $a \in \cX$ such that $o^+ \top a$, i.e.,
$a \in \bA^-$.
\begin{enumerate}[label=\roman*\emph{)},leftmargin=*]
\item
By restriction, $\tau$ induces an involutive antiautomorphism
of $\AAA$.
This defines a functor from
the category of unital involutive associative geometries to the
category of involutive associative algebras.
\item
If  $a \in \cY'$,
then the Lie algebra of the group $\cG(\tau; a)$
is the space $\Herm(\AAA,\tau)=\AAA^{\tau}$ with Lie bracket
$[x,z]_a = 2(\langle x az\rangle - \langle zax\rangle)$. Identifying $\bA$ and $\bA^-$ via the canonical
isomorphism $j$, $a$ is identified with the element $j(a) \in \Aherm(\bA,\tau)$
and  the Lie bracket is expressed in terms of $\bA$ as
$$
[x,z]_a = 2(x az - zax) .
$$
\item
If $a \in \cY$,
then the Lie algebra of the group $\cG(\tau';a)$
is the space $\Aherm(\AAA,\tau)=\AAA^{\tau'}$ with Lie bracket
$[x,z]_a = 2(\langle xaz\rangle - \langle zax\rangle )$. With similar identifications as above,
this can be rewritten as $[x,z]_a = 2(x az - zax)$.

If, moreover, $a$ is invertible in $\bA$, then the group
$\cG(\tau';a)$ is isomorphic to the unitary group
$\UU(\bA_a,*) = \{ x \in \bA | \, x a x^* = 1 \}$
of the involutive algebra $(\bA_a,\tau)$  with product $x \cdot_a y = xay$ and involution $\tau$.
\end{enumerate}
\end{theorem}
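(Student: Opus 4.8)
The plan is to reduce all three parts to the associative-pair statements of Theorem~\ref{functor1}, upgrading them to the unital tangent algebra $\AAA = C_{o^-}$ by means of the unit $e$ and the canonical isomorphism $j = M_{eo^+o^-e}$, and to handle the dual side through Lemmas~\ref{dual} and~\ref{dual'}. For part (i), Theorem~\ref{functor1}(i) already provides a type-preserving involution $(\tau^+,\tau^-)$ of the pair $(\AAA^+,\AAA^-)$; since $\tau$ fixes $o^\pm$ and $e$, property~(2) of a restricted involution applied to the algebra product $x\cdot y=\Gamma(x,o^+,e,o^-,y)$ gives at once
\[
\tau(x\cdot y)=\Gamma(\tau y,o^+,e,o^-,\tau x)=\tau(y)\cdot\tau(x),
\]
so that $\tau$ is an antiautomorphism, and $\tau^2=\id$ makes it an involution. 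Functoriality is then a formal consequence of the functoriality of the pair construction (\cite{BeKi09}), since morphisms of unital involutive geometries restrict to unit-preserving algebra homomorphisms intertwining the involutions.

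For part (ii) I would apply Theorem~\ref{functor1}(iii) directly: the hypothesis $a\in\cY'$ is exactly $\tau(a)=-a$, so the Lie algebra of $(\GG(\tau;a),o^+)$ is $(\AAA^+)^{\tau^+}=\Herm(\AAA,\tau)$ with bracket $[x,z]_a=2(\langle xaz\rangle-\langle zax\rangle)$. It remains to pass from the pair triple product to the algebra product: using the unital specialization of the pair product (which expresses $\langle x,a,z\rangle^+$ through the algebra product once $\AAA^-$ is identified with $\AAA$ via $j$, the case $a=e$ reducing to $x\cdot z$), the bracket becomes $2(xaz-zax)$ with $a$ now read as $j(a)\in\AAA$. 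That $j(a)\in\Aherm(\AAA,\tau)$ follows from $\tau(a)=-a$ together with the relation $\tau j=j\tau$ of Lemma~\ref{dual'} and the additivity of $j$: indeed $\tau(j a)=j(\tau a)=j(-a)=-\,j a$.

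Part (iii) is the mirror image for the dual involution. By Lemma~\ref{dual}, $\tau'$ is again base-point preserving, so Theorem~\ref{functor1}(iii) applies to $\tau'$; the condition $a\in\cY$ gives $\tau'(a)=-a$, hence the Lie algebra of $\GG(\tau';a)$ is $(\AAA^+)^{(\tau')^+}$. Since $\tau'$ acts on $\AAA^+$ as $-\tau$, this fixed space is precisely $\Aherm(\AAA,\tau)=\AAA^{\tau'}$, and the same translation as in part (ii) rewrites the bracket as $2(xaz-zax)$, now with $j(a)\in\Herm(\AAA,\tau)$ since here $\tau(a)=a$.

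The final assertion, identifying $\GG(\tau';a)$ with a multiplicative unitary group when $a$ is invertible, is where the real work lies. Here I would use the isomorphism $\GG(\tau';a)=\GG(-\tau;a)\cong\UU(\tau;2a,o^+,o^-)$ recorded in the discussion preceding this subsection (obtained by combining Theorems~\ref{conjug} and~\ref{functor1}), together with the explicit affine description that $\UU(\tau;2a,o^+,o^-)$ consists of those $x\in\AAA$ with $1-x(2a)$ invertible satisfying $x+\tau(x)=\langle x(2a)\tau(x)\rangle$. Passing to the homotope $\bA_a$ with product $x\cdot_a y=xay$, on which $\tau$ is an involution because $j(a)$ is $\tau$-fixed, invertibility of $a$ makes $\bA_a$ unital, and the Cayley transform $\rho$ of Lemma~\ref{dual'}, characterized by $\rho\tilde\tau\rho^{-1}=\tau'$ for the base-point exchanging involution $\tilde\tau=j\tau$, carries the additive affine unitarity relation to the multiplicative condition defining $\UU(\bA_a,\tau)$. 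The main obstacle is exactly this affine-to-multiplicative passage: one must verify that the birational Cayley map sends $x+\tau(x)=x(2a)\tau(x)$ to the multiplicative unitarity relation and restricts to a group isomorphism, while keeping the $j$-identifications and the Hermitian versus anti-Hermitian types consistent throughout; invertibility of $a$ (and of $2$, implicit in the halving) is what makes the transform available.
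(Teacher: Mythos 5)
Parts (i), (ii) and the Lie-algebra half of (iii) of your proposal coincide with the paper's own argument: the antiautomorphism computation $\tau(x\cdot z)=\Gamma(\tau z,o^+,e,o^-,\tau x)$, the reduction to Theorem \ref{functor1}(iii) under the conditions $\tau(a)=-a$ (resp.\ $\tau'(a)=-a$), the rewriting $\langle xaz\rangle^+ = x\,j(a)\,z$, and the commutation $j\tau=\tau j$ are exactly the steps used there, so these parts are correct.

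The genuine gap is in the final assertion of (iii), the isomorphism $\cG(\tau';a)\cong\UU(\bA_a,*)$ for invertible $a$ --- precisely the step you yourself flag as ``the main obstacle'' and never carry out. Your route goes through Theorem \ref{conjug}, identifying $\cG(\tau';a)$ with the affine group $\UU(\tau;2a,o^+,o^-)$, and then asserts that the Cayley transform $\rho$ converts the affine relation $x+\tau(x)=\langle x(2a)\tau(x)\rangle$ into the multiplicative one. But $\rho$, as characterized in Lemma \ref{dual'} by $\rho\tilde\tau\rho^{-1}=\tau'$, does not perform that passage: it intertwines the fixed-point set of $\tilde\tau$ with that of $\tau'$, whereas the affine group $\UU(\tau;2a,o^+,o^-)$ is the fixed-point set of a different map, namely $i_{2a}\circ\tau$ acting on $U_{2a,o^-}$; nothing in Lemma \ref{dual'} connects $\rho$ to this set. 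The observation that makes the paper's proof work, and which is absent from your proposal, is elementary but essential: for $a=e$ the condition $xx^*=1$ is equivalent to $x=(x^*)^{-1}=j\tau(x)=\tilde\tau(x)$, so that $\UU(\bA,*)$ \emph{is} the fixed-point set of $\tilde\tau$ in $\bA^\times=U_{o^+o^-}$. With that in hand, Lemma \ref{dual'} immediately yields that $\rho$ maps $\UU(\bA,*)$ isomorphically onto the fixed-point set of $\tau'$ in $U_{e,-e}=U_{e,\tau'(e)}$, which is $\cG(\tau';e)$ by definition, and the case of a general invertible $a\in\cY\cap U_{o^+o^-}$ follows by replacing $e$ with $a$, i.e.\ $\bA$ by its isotope $\bA_a$; no affine picture and no Theorem \ref{conjug} is needed. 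If you insist on your detour, the affine-to-multiplicative passage must be supplied by a separate computation --- for instance the group isomorphism $x\mapsto 1-x(2a)$ of $(G(\bA,2a),\cdot_{2a})$ onto $\bA^\times$ from the Introduction, which transforms $x+x^*=x(2a)x^*$ into $g a^{-1}g^*=a^{-1}$, followed by the identification $x\mapsto xa$ with the unitary group of the isotope --- and none of this is in your text: the load-bearing step is asserted, not proved.
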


\begin{proof}
(i)
We show that $\tau$ induces an algebra involution:
$$
\tau (xz)  =  \tau
\Gamma\bigl(x,o^+,e,o^-,z \bigr)
=\Gamma\bigl(\tau z,o^+,e,o^-,\tau x \bigr)
=(\tau z)(\tau x)
$$
Functoriality follows from \cite{BeKi09}, Theorem 3.4.

(ii)
The fixed point space of $\tau$ in $\bA$ is, by definition,
$\Herm(\bA,*)$, and by Lemma \ref{clue}, $\tau$ is an automorphism of
$U_{a \tau(a)}$. The formula from the Lie bracket follows from
Theorem \ref{functor1}. Finally, in order to relate the associative
pair to the algebra formulation, recall from \cite{BeKi09} that,
for all $a \in \bA^-$ and $x,z \in \bA^+$,
$$
\langle xay\rangle^+ = x \cdot j(a) \cdot z,
$$
where on the right hand side products are taken in the algebra $\bA$.
Since the $\K$-linear isomorphism $j:\bA^+ \to \bA^-$ commutes with
$\tau$, the formulas from the claim follow.

(iii)
The statement on the Lie algebra
 is proved in the same way as (ii), with signs changed.
Now let  $a$ be invertible. Assume first $a=1$.
Note that the condition $xx^*=1$ is equivalent to
$x=(x^*)^{-1}=j \tau(x)$, and hence $\UU(\bA,*)$ is precisely
the fixed point set of $\tilde \tau$ in $\bA$.
Its group structure is induced from $\bA^\times = U_{o^+o^-}$.
Now, the setting
$(\bA^\times,\tilde \tau)=
(U_{o^+o^-},j\tau)$ is conjugate,
via the Cayley transform $\rho$, to the setting
$(U_{e,-e},\tau')=(U_{e,\tau'(e)},\tau')$, showing that the Cayley
transform $\rho$ induces the desired isomorphism.
In these arguments, the fixed element $e \in ( \cY \cap U_{o^+o^-})$
may be replaced by any other element $a$ of this set; this simply
amounts to replacing $\bA$ by its isotope algebra $\bA_a$.
\end{proof}

\begin{theorem} \label{functor2}
Consider the following classes of objects:
\begin{description}
\item[IG] associative geometries with base triple and
base triple preserving involutions,
 \item[IA] involutive unital associative algebras.
\end{description}
There are maps $F: {\bf IG} \to {\bf IA}$ and $G: {\bf IA} \to {\bf IG}$
such that $G \circ F$ is the identity.
\end{theorem}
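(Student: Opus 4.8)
The plan is to write down both maps explicitly and then reduce the whole statement to a single computation of an orthogonal complement. The map $F$ is already at hand: it is the functor of part (i) of Theorem \ref{functor'}. Given an object of $\mathbf{IG}$ — a Grassmannian geometry $(\cX,\Gamma)$ with transversal triple $(o^{+},e,o^{-})$ and unital base-triple preserving involution $\tau$ — I set $F(\cX,\tau):=(\bA,\tau|_{\bA})$, where $\bA=C_{o^{-}}$ is the tangent algebra with origin $o^{+}$, unit $e$, and product \eqref{asproduct}. By Theorem \ref{functor'}(i) the restriction $\tau|_{\bA}$ is an involutive antiautomorphism, so $(\bA,\tau|_{\bA})$ is indeed an object of $\mathbf{IA}$, and functoriality is exactly the functoriality asserted there.

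For $G$ I use the case $n=2$ of Theorem \ref{Th:3.2}. Starting from $(\bA,*)$, I take the base ring $\bB:=\bA$ with involution $\overline z:=z^{*}$, put $W:=\bB^{2}$ and $\cX:=\Gras(\bB^{2})$, and choose the base triple $o^{+}:=\bB\oplus 0$, $o^{-}:=0\oplus\bB$, $e:=\{(v,v)\mid v\in\bB\}$ together with the non-degenerate skew-Hermitian form $\beta((x_{1},x_{2}),(y_{1},y_{2})):=\overline{x_{1}}\,y_{2}-\overline{x_{2}}\,y_{1}$. Theorem \ref{Th:3.2} then gives that $\tau:=\perp_{\beta}$ is a restricted involution, and the three immediate checks $(o^{+})^{\perp}=o^{+}$, $(o^{-})^{\perp}=o^{-}$, $e^{\perp}=e$ show that it is unital base-triple preserving; thus $G(\bA,*):=(\cX,\Gamma;o^{+},e,o^{-};\tau)$ is an object of $\mathbf{IG}$. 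I will also record that a homomorphism of involutive algebras induces the evident morphism of such geometries, so that $G$ is a map of the required kind.

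The theorem then reduces to checking that the round trip $F(G(\bA,*))$ returns $(\bA,*)$, i.e.\ that the relevant composite $\mathbf{IA}\to\mathbf{IG}\to\mathbf{IA}$ is the identity. The tangent algebra of $\Gras(\bB^{2})$ at $(o^{+},e,o^{-})$ is identified with $\bA$ via $a\mapsto x_{a}:=\{(v,av)\mid v\in\bB\}$, the graph of left multiplication by $a$; this is an algebra isomorphism carrying $0$ to $o^{+}$ and $1$ to $e$. The one computation that carries the proof is $(x_{a})^{\perp}=x_{\overline a}$: indeed $(y_{1},y_{2})\in(x_{a})^{\perp}$ forces $\overline v\,(y_{2}-\overline a\,y_{1})=0$ for all $v$, hence $y_{2}=\overline a\,y_{1}$. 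Therefore $\tau|_{\bA}$ is precisely $a\mapsto\overline a=a^{*}$, so $F(G(\bA,*))=(\bA,*)$ on the nose.

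I expect this last computation to be the only genuine obstacle. The subtlety is to confirm that the involution which $\perp_{\beta}$ induces on the tangent algebra is the \emph{given} $*$ and not some twist of it — by passage to the opposite algebra, by an inner automorphism, or by the switch between the left- and right-module structures used in the identification $\End_{\bB}(o^{+})\cong\bA$. This is exactly where the adjoint formula $X^{*}=B^{-1}\overline X^{t}B$ of Theorem \ref{Th:3.2} must be tracked carefully, and where the \emph{skew}-Hermitian (rather than Hermitian) choice of $\beta$ is forced upon us by the requirement $e^{\perp}=e$.
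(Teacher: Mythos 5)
Your proposal is correct and is essentially the paper's own proof: $F$ is the restriction functor from Theorem \ref{functor'}(i), $G$ sends $(\bA,*)$ to the Grassmannian of $\bA^2$ with base triple $(\bA\oplus 0,\Delta,0\oplus\bA)$ and the orthocomplementation involution of the skew-Hermitian form $\beta(x,y)=\overline x_1 y_2-\overline x_2 y_1$ (via the $n=2$ case of Theorem \ref{Th:3.2}), and the round trip is closed by checking that the $\beta$-orthocomplement of the graph $\{(v,av)\}$ is the graph of $a^*$ --- exactly the paper's final step, which you merely write out in full. Note that both you and the paper in fact verify $F\circ G=\mathrm{id}$ on ${\bf IA}$ (recovering the algebra and its involution from the constructed geometry), which is the only sensible reading of the ``$G\circ F$'' in the theorem's statement, since the opposite composite cannot be the identity (non-isomorphic geometries can have the same tangent algebra).
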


\begin{proof}
The map $F$ is defined by part (i) of the preceding theorem.
We define the map $G$:
given an involutive associative algebra $(\AAA,*)$,
let $\hat \cX$ be the Grassmannian of complemented right
$\bA$-submodules in $\bA^2$.
We define on $\bA^2$ the skew-Hermitian (``symplectic'') form
$$
\beta(x,y)=\overline x_1 y_2 - \overline x_2 y_1 .
$$
and consider the involution $\tau$ given by the orthocomplementation
map with respect to this form.
Let $o^+ = \bA \oplus 0$ (first factor),
$o^- = 0 \oplus \bA$ (second factor) and
$e = \Delta$ (diagonal in $\bA^2$).
Then $(o^+,e,o^-)$ is a transversal triple, preserved by $\tau$.
This defines $G$.  The associative
algebra $C_{o^-}$ associated to these data is the algebra $\bA$ we started with
(cf.\ \cite{BeKi09}, Theorem 3.5).
It remains to prove that restriction of $\tau$ to $C_{o^-} = \bA$
gives back the involution $*$ we started with.
Let $a \in \bA$ and identify it with the graph
$\{ (v,av) | \, v \in \bA \}$.
Then the graph of the adjoint operator $a^*$ is the orthogonal
complement of this graph with respect to $\beta$, whence $\tau(a)=a^*$.
\end{proof}

We have seen above that $F$ is a functor; for $G$, this is less clear  --
 cf.\ remarks in \cite{BeKi09}, Section 3.4. We will not pursue here further the discussion
 of functoriality, nor will we
state an analog of the theorem for the
non-unital case. Constructions are similar in that case, but are more
complicated (since one has to use some algebra-imbedding of
an associative pair, see \cite{BeKi09}), and practically less relevant than
the unital case.

\section{The classical torsors}

Putting together the results from the preceding two chapters, the ``projective"
description of the classical groups (Table given in the Introduction) is now straightforward:
we just have to restate Theorems \ref{conjug} and \ref{unitalMaintheorem}
for involutions given by orthocomplementation (Theorem \ref{Th:3.2}).
In the following, we list the results, first for the case of bilinear forms, then
for sesquilinear forms.

\subsection{Orthogonal and (half-) symplectic groups}
We specialize
Theorem \ref{unitalMaintheorem}
to the case $\bB=\K$, $W = \K^{2n} = \K^n \oplus \K^n$. Let
 $(o^+,e,o^-)$ be the canonical base triple
$(\K^n \oplus 0,\Delta, 0 \oplus \K^n)$
and
$\beta$ the standard symplectic form on $\K^{2n}$.
By Theorem \ref{Th:3.2}, we
have the three (restricted) involutions
$\tau$, $\tau'$, $\tilde \tau$: they are the orthocomplementation
maps with respect to the three forms given by the matrices
\begin{equation}
\Omega_n := \begin{pmatrix} 0 & 1_n \cr -1_n & 0 \end{pmatrix},
\quad
F_n:= \begin{pmatrix} 0 & 1_n \cr 1_n & 0 \end{pmatrix},
\quad I_{n,n}:=\begin{pmatrix} 1_n & 0 \cr 0 & -1_n \end{pmatrix}.
\label{sympl'}
\end{equation}
Note that $o^+$, $o^-$ and $\Delta$ are maximal isotropic for
$\beta$, hence $\tau$ is a unital base point preserving involution.
The involutive algebra
corresponding to the unital base point preserving involution $\tau$
is  $\bA = M(n,n;\K)$ with involution
$X^* = X^t$ (usual transpose).
The fixed point spaces of the three involutions
 are the classical Lagrangian varieties
corresponding to the three forms, and the tangent space
 of $\cY$ at $o^+$ is
$\Sym(n,\K)$ and the one of $\cY'$ at $o^+$ is $\Asym(n,\K)$.
Note that $\Sym(n,\K)$ is imbedded in $\cY$, and
$\Asym(n,\K)$ in $\cY'$, the subsets of elements of $\cY$ (resp.\
of $\cY'$) that are
transversal to $o^-$. Therefore the elements $a$ parametrizing
the torsors $\cG(\tau;a)$ (resp.\ $\cG(\tau';a)$) will be chosen in these
subsets. From Theorem \ref{conjug} we get:

\begin{proposition}
For $a = A \in \Sym(n,\K)$, the group $\cG(\tau;a)$ with origin $o^+$
is isomorphic to the group $\OO_n(2A,\K)$,
and for $a=A \in \Asym(n,\K)$, the group $\cG(\tau';a)$ with
origin $o^+$ is isomorphic to the group $\Sp_{n/2}(2A;\K)$.
If $2$ is invertible in $\K$, then these groups are isomorphic to
$\OO_n(A,\K)$, resp.\ $\Sp_{n/2}(A;\K)$.
\end{proposition}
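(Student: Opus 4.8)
The plan is to obtain the proposition as a direct specialization of Theorems~\ref{conjug} and~\ref{unitalMaintheorem} to the orthocomplementation involutions furnished by Theorem~\ref{Th:3.2}, followed by an identification of the resulting $\tau$- and $\tau'$-unitary groups with the affine matrix groups of the Introduction. First I would record, by Theorem~\ref{unitalMaintheorem}(i) (equivalently Theorem~\ref{functor2}), that the unital base-point-preserving involution $\tau$ restricts on the tangent algebra $\bA = C_{o^-} \cong M(n,n;\K)$ to the transpose $X \mapsto X^t$, so that $\Herm(\bA,\tau)=\Sym(n;\K)$, while the dual involution $\tau'$ restricts to $X \mapsto -X^t$, with $\Herm(\bA,\tau')=\Asym(n;\K)$. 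This is exactly the $\Herm$/$\Aherm$ duality stressed in the Introduction, and it dictates that $\tau$ be paired with the chart $\Sym$ and $\tau'$ with $\Asym$.

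Next I would unfold the intrinsic definition of the unitary groups into matrices. In the affine chart $\bA^+\cong M(n,n;\K)$ the discussion following Theorem~\ref{functor1} gives
\[
\UU(\tau;b,o^+,o^-)=\{\,x \mid 1-xb \text{ invertible},\ x+\tau(x)=\langle x\,b\,\tau(x)\rangle\,\},
\]
with $\langle x\,b\,y\rangle$ the ordinary matrix product $xby$. For $\tau={}$transpose and $b=2A$ with $A\in\Sym$, the defining relation reads $x+x^t=x\,(2A)\,x^t$, which after the change of variable $x=X^t$ becomes $X+X^t=X^t(2A)X$, the relation defining $\OO_n(2A;\K)$. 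For $\tau'$ (so $\tau'(x)=-x^t$) and $b=2A$ with $A\in\Asym$, the same computation produces $X-X^t=X^t(2A)X$, the defining relation of $\Sp_{n/2}(2A;\K)$. Thus the affine $\tau$- and $\tau'$-unitary groups are, verbatim, the classical orthogonal and (half-)symplectic groups.

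Finally I would apply Theorem~\ref{conjug}. Taking $a=A$ in the $\tau$-fixed affine chart $\cY\cap U_{o^-}=\Sym$, the translation $\tilde t_a$ identifies the torsor-group $\cG(\tau;A)$ based at $o^+$ with $\UU(\tau;2A,o^+,o^-)=\OO_n(2A;\K)$; the parallel application for $\tau'$ with $a=A\in\Asym$ gives $\cG(\tau';A)\cong\UU(\tau';2A,o^+,o^-)=\Sp_{n/2}(2A;\K)$. For the final clause, when $2\in\K^\times$ the scaling $X\mapsto X/2$ — the scalar action $\Pi_{1/2}$ of Theorem~\thmref{geom_props}(iii) read in the chart — yields the isotopies $\OO_n(2A;\K)\cong\OO_n(A;\K)$ and $\Sp_{n/2}(2A;\K)\cong\Sp_{n/2}(A;\K)$, producing the simplified forms.

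The hard part will not be any single computation but the bookkeeping across the three pictures — the projective torsor inside the Lagrangian variety $\cY$ (resp.\ $\cY'$), its affine realization as a unitary group, and the matrix group of the Introduction. Two points demand care: the factor $2$ that Theorem~\ref{conjug} inserts through the passage $a\rightsquigarrow 2a$ (absorbable only when $2$ is invertible), and the precise pairing of each involution with its tangent chart, since interchanging $\tau$ with $\tau'$ here would silently swap the orthogonal and symplectic conclusions. Checking that the intrinsic relation $x+\tau(x)=\langle x\,b\,\tau(x)\rangle$ reproduces the matrix equations $X+X^t=X^tAX$ and $X-X^t=X^tAX$ on the nose, signs included, is the concrete calculation underlying all of this.
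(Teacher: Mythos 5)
Your overall strategy---specializing Theorem \ref{conjug} together with the affine description of $\UU(\tau;b,o^+,o^-)$ following Theorem \ref{functor1}---is exactly how the paper obtains this proposition (it gives no separate proof beyond ``From Theorem \ref{conjug} we get''), and two of your three steps are sound: the unfolding of the unitarity condition $x+\tau(x)=\langle x\,b\,\tau(x)\rangle$ into the matrix equations $X+X^t=X^t(2A)X$ and $X-X^t=X^t(2A)X$ (with the transpose change of variable), and the scaling $\Pi_{1/2}$, i.e.\ $X\mapsto X/2$, which correctly gives $\OO_n(A;\K)\cong\OO_n(2A;\K)$ and $\Sp_{n/2}(A;\K)\cong\Sp_{n/2}(2A;\K)$ when $2\in\K^\times$. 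The failure is in your decisive third step, where you misquote Theorem \ref{conjug}. That theorem states: for $a\in\cY\cap U_{o^-}=(\bA^+)^\tau$, the groups $\GG(-\tau;a)$ and $\UU(\tau;2a,o^+,o^-)$ are isomorphic---the group is cut out by the \emph{dual} involution $-\tau=\tau'$, not by $\tau$. So for $A\in\Sym(n,\K)\subset\cY$ the theorem yields $\cG(\tau';A)\cong\UU(\tau;2A,o^+,o^-)\cong\OO_n(2A;\K)$, and the parallel application with $\tau'$ in the role of $\tau$ (legitimate by Lemma \ref{dual}) yields, for $A\in\Asym(n,\K)\subset\cY'$, $\cG(\tau;A)\cong\UU(\tau';2A,o^+,o^-)\cong\Sp_{n/2}(2A;\K)$. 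Your pairing $\cG(\tau;A)\cong\OO_n(2A;\K)$ for $A\in\Sym$ cannot be repaired: if $a\in\cY$ then $\tau(a)=a$, so $U_{a,\tau(a)}=U_a$ and $\cG(\tau;a)$ is the \emph{abelian} additive torsor of an affine space by Lemma \ref{clue}(iii), whereas $\OO_n(2A;\K)$ is nonabelian whenever $2A$ is invertible and $n\geq 2$ (the map $X\mapsto 1-2AX$ carries it onto the genuine orthogonal group of the form $(2A)^{-1}$).

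To be fair, the proposition as printed carries the same $\tau\leftrightarrow\tau'$ swap, and its wording conflicts with the paper's own Theorem \ref{conjug}, Lemma \ref{clue}(iii), and Theorem \ref{unitalMaintheorem}(ii),(iii); the internally consistent statement is: for $A\in\Sym$, $\cG(\tau';A)\cong\OO_n(2A;\K)$, and for $A\in\Asym$, $\cG(\tau;A)\cong\Sp_{n/2}(2A;\K)$. This is confirmed on the Lie algebra level by Theorem \ref{unitalMaintheorem}: for $a\in\cY'$ the Lie algebra of $\cG(\tau;a)$ is $\Sym(n;\K)$ with bracket twisted by an element of $\Asym$, i.e.\ $\sP_{n/2}(2A;\K)$, so $\cG(\tau;\cdot)$ is the \emph{symplectic} family; dually $\cG(\tau';\cdot)$ has Lie algebra $\oo_n(2A;\K)$. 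It also matches the Introduction's geometric picture: the orthogonal groups are realized inside the Lagrangian variety of the split symmetric form $F_n$, that is inside $\cY'=\cX^{\tau'}$, with parameters taken from the dual Lagrangian $\cY$, while the (half-)symplectic groups live inside the symplectic Lagrangian variety $\cY=\cX^\tau$, parametrized by $\cY'$. Your own warning about ``silently swapping'' names precisely the trap you then fell into: the heuristic of your first paragraph---that the $\Herm/\Aherm$ duality pairs $\tau$ with the chart $\Sym$---is correct for the \emph{parameter} $a$ but not for the involution labelling the group $\cG(\cdot\,;a)$, because Theorem \ref{conjug} intertwines each involution's fixed parameter locus with the group cut out by its dual.
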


Having established the link of the projective torsors $\cG(\tau;a)$ with
the affine realization of the classical torsors from the Introduction,
it is now relatively easy to \emph{classify} them
(in finite dimension over $\K=\C$ or $\R$; the
case of general base fields is much more difficult, and for general
base rings and arbitrary dimension, classification results can only
be expected under rather special assumptions).

\begin{proposition}
A complete classification of the homotopes of complex
or real orthogonal, resp.\ (half-)symplectic
groups  is given as follows:
\begin{enumerate}
\item (half-)symplectic case:
for $\K=\R,\C$, all homotopes are isomorphic to one of the groups
$\Sp_m(\Omega_r;\K)$ for $r=1,\ldots,m$ (with $n=2m$ or $n=2m+1$),
where $\Omega_r$ denotes the normal form of a skew-symmetric matrix of rank
$2r$,
\item orthogonal case:
for $\K=\C$, all homotopes are isomorphic to one of the groups
$\OO_n(1_r;\C)$ for $r=1,\ldots,n$, where $1_r$ denotes the $n \times n$-diagonal matrix of rank $r$
having first $r$ diagonal elements equal to one,

for $\K=\R$, all homotopes are isomorphic to one of the groups
$\OO_n(I_{r,s};\R)$, where $I_{r,s}$ denotes the $n \times n$-diagonal matrix of rank $r+s$
($r \leq s$, $r+s \leq n$)
having first $r$ diagonal elements equal to one and $s$ diagonal elements equal to minus one.
\end{enumerate}
\end{proposition}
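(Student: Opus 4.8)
The plan is to reduce the classification to the classical theory of symmetric and skew-symmetric bilinear forms up to congruence. By the preceding Proposition, and since $2$ is invertible for $\K=\R,\C$, every homotope of an orthogonal group is isomorphic to one of the affine-model groups $\OO_n(A;\K)$ with $A\in\Sym(n;\K)$, and every homotope of a (half-)symplectic group to one of the groups $\Sp_{n/2}(A;\K)$ with $A\in\Asym(n;\K)$. It therefore suffices to decide, for parameters $A$ and $A'$, when the corresponding groups are isomorphic, and to exhibit a normal form for $A$.

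First I would prove the positive direction: congruent parameters yield isomorphic groups. For $g\in\Gl(n;\K)$ put $\phi_g(X):=gXg^t$. Since $(gXg^t)^t=gX^tg^t$, the map $\phi_g$ preserves both $\Sym(n;\K)$ and $\Asym(n;\K)$, and a one-line computation gives $\phi_g(X)\cdot_{A'}\phi_g(Y)=\phi_g(X\cdot_A Y)$ precisely when $g^tA'g=A$. The defining relations $X+X^t=X^tAX$ (orthogonal) and $X-X^t=X^tAX$ (symplectic), as well as the condition that $1-AX$ be invertible, transform correctly under $\phi_g$, the latter because $1-A'gXg^t=(g^t)^{-1}(1-AX)g^t$. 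Hence $\phi_g$ is a group isomorphism $\OO_n(A;\K)\to\OO_n(A';\K)$ (resp.\ for $\Sp$) whenever $A$ and $A'$ are congruent. A second, independent symmetry is $X\mapsto -X$, which satisfies $(-X)\cdot_{-A}(-Y)=-(X\cdot_A Y)$ and so furnishes an isomorphism $\OO_n(A;\K)\cong\OO_n(-A;\K)$.

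With these reductions the list follows from the classical congruence normal forms. Over any field in which $2$ is invertible, a skew-symmetric matrix is congruent to $\Omega_r$ (a direct sum of $r$ standard symplectic blocks and a zero block) and is classified by its rank $2r$, $1\le r\le m=\lfloor n/2\rfloor$ (together with the flat case $A=0$); this is identical for $\K=\R$ and $\K=\C$ and yields the symplectic list. Over $\K=\C$ a symmetric matrix is congruent to $1_r$, classified by its rank $r$. Over $\K=\R$, Sylvester's law of inertia gives the normal form $I_{r,s}$, classified by the pair $(r,s)$ with $r+s\le n$; the symmetry $A\sim-A$ interchanges $(r,s)$ with $(s,r)$, so one normalizes $r\le s$. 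This reproduces the three lists in the statement.

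The main obstacle is the converse, \emph{irredundancy}: that the listed groups are pairwise non-isomorphic, i.e.\ that the congruence invariants (rank, and signature over $\R$) are genuine isomorphism invariants of the abstract group rather than artefacts of the construction. The Lie algebra alone is too coarse --- for instance $\oo_2(A;\K)$ is one-dimensional abelian for every $A$ --- so one must exploit the global group structure. Concretely, writing $A=\mathrm{diag}(A_0,0)$ with $A_0$ invertible of size $k$ equal to the rank, the top-left block defines a surjective homomorphism $X\mapsto X_{11}$ from $\OO_n(A;\K)$ onto the genuine (nondegenerate) classical group $\OO_k(A_0;\K)$, with nilpotent kernel, and similarly in the symplectic case. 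The isomorphism type of this reductive (Levi) quotient is a true orthogonal, resp.\ symplectic, group, and so fixes the rank $k$ and, over $\R$, the unsigned signature, while the dimension of the nilpotent kernel records the corank $n-k$; both are preserved by any abstract group isomorphism, giving non-isomorphism of distinct normal forms. Establishing this Levi-type decomposition and verifying that it is respected by isomorphisms is where the real work lies.
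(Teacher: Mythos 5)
Your core argument is the paper's own ``affine'' proof, almost verbatim: congruence invariance $\OO_n(A;\K)\cong\OO_n(gAg^t;\K)$, reduction to the classification of $\Gl(n;\K)$-orbits in $\Sym(n;\K)$ resp.\ $\Asym(n;\K)$, and the sign symmetry (the paper phrases your $X\mapsto -X$ as $\OO_n(\lambda A;\K)\cong\OO_n(A;\K)$ for any invertible scalar $\lambda$) to normalize $r\le s$ over $\R$. The paper also records a second, ``projective'' variant that you do not use: $\cG(\tau;a)\cong\cG(\tau;b)$ whenever $a$ and $b$ lie in the same $\Aut(\cX,\tau)$-orbit, and these orbits are classified by Witt's theorem through the isometry class of the restriction of the given form to the subspace $a$; that version covers all projective parameters at once, not only those in the affine chart. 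Concerning what you call the ``main obstacle'': the paper neither proves nor needs irredundancy --- its proposition and proof assert only that every homotope is isomorphic to some member of the list, and the paper explicitly cautions that distinct parameters (even distinct orbits) may yield isomorphic groups. So your unfinished Levi-quotient argument is extra, not a gap relative to the paper's proof; if you do want to complete it, the delicate point is exactly the one you flag, namely that an isomorphism carries the nilpotent kernel to the nilpotent kernel --- harmless in the Lie or algebraic group category, where the nilradical is characteristic, but genuinely nontrivial for abstract group isomorphisms.
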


\begin{proof}
One can prove the classification from a ``projective" point of view:
clearly,  if $a$ and $b$ belong to the same $\Aut(\cX,\tau)$-orbit
in $\cX$, then $\cG(\tau;a)$ and $\cG(\tau;b)$ are isomorphic,
and it is enough to consider orbits of subspaces $a\subset W$ such that $a$
and $\tau(a)$ have same dimension $n$ (otherwise $U_{a,\tau(a)}$ is empty).
Classifying such orbits is done by elementary linear algebra using
Witt's theorem:
$a$ and $b$ are conjugate iff the restriction of the
given forms to $a$, resp.\ $b$ are isomorphic.
In particular,
the totally isotropic subspaces form one orbit (the Lagrangian $\cY$).
The list of orbits then gives rise to the given list of homotopes.

Alternatively, an ``affine" version of these
arguments goes as follows: using the explicit description of the classical
groups given in the Introduction,
one notices that, e.g., $\OO_n(A;\K)$ and $\OO_n(gAg^t;\K)$
are isomorphic for all $g \in \Gl(n;\K)$; hence it suffices to
to consider the  classification of $\Gl(n;\K)$-orbits in
$\Sym(n;\K)$. This leads to the same result (note, however,  that different orbits may
give rise to isomorphic groups: e.g., $\OO_n(\lambda A;\K)$ and $\OO_n(A;\K)$ are isomorphic
whenever the scalar $\lambda$ is invertible, be it a square or not in $\K$).
Similarly for the symplectic case.
\end{proof}

\subsection{Unitary groups}

The following classification of real classical torsors associated
to involutive algebras of Hermitian type
is established in the same way as above:

\begin{proposition}
Homotopes of complex and quaternionic unitary groups are classified as follows
(see Introduction for the notation $\widetilde \bH$):

\msk \noindent
\begin{tabular}{lllll}
\noindent  & $\bA = M(n,n;\bH)$, & $\tau(X):=\overline X^t$ & &
\cr
\hline
a) & $\bA^\tau=\Herm(n,\bH)$ &
 $\UU_n(i 1_{r};\widetilde \bH)$ ($r \leq n$) &
 homotopes of $\OO^*(2n)$ &
\cr
b) & $\bA^{\tau'}=\Aherm(n,\bH)$ &
 $\UU_n(I_{r,s},\bH)$ ($r \leq s$, $r+s \leq n$) &
 homotopes of $\Sp(p,q)$ &
\end{tabular}

\ssk \noindent
\begin{tabular}{lllll}
\noindent  & $\bA = M(n,n;\C)$, & $\tau(X):=\overline X^t$ & &
\cr
\hline
a) & $\bA^\tau=\Herm(n,\C)$ &
 $\UU_n( i  I_{r,s};\C)$ ($r \leq s$, $r+s \leq n$) &
 homotopes of $\UU(p,q)$ &
\cr
b) & $\bA^{\tau'}=i\Herm(n,\C)$ &
 $\UU_n(I_{r,s};\C)$ ($r \leq s$, $r+s \leq n$)  &
 homotopes of $\UU(p,q)$ &
\end{tabular}
\end{proposition}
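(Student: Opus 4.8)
The plan is to imitate almost verbatim the proof of the preceding proposition, replacing the congruence action of $\Gl(n;\K)$ on symmetric (resp.\ skew-symmetric) matrices by the analogous action on Hermitian (resp.\ skew-Hermitian) matrices over $\C$ and $\HHH$. First I would record, exactly as in the ``affine'' version above, that $\UU_n(A;\K)$ and $\UU_n(gA\overline{g}^t;\K)$ are isomorphic for every $g \in \Gl(n;\K)$, where $\overline{\cdot}$ is the conjugation attached to the relevant involution. By Theorems \ref{conjug} and \ref{unitalMaintheorem}, classifying the homotopes $\cG(\tau;a)$ of row a) (resp.\ $\cG(\tau';a)$ of row b)) then amounts to classifying the orbits, under this congruence action, of the parameter $a$, which by Theorem \ref{unitalMaintheorem} is a skew-Hermitian form in case a) and a Hermitian form in case b). As before it suffices to treat parameters with $\dim a = \dim \tau(a) = n$, and the totally isotropic orbit reappears as the Lagrangian $\cY$.

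Next I would invoke the classification of Hermitian and skew-Hermitian forms. Over $\C$, Sylvester's law of inertia shows that a Hermitian form is congruent to a unique $I_{r,s}$; after using $\UU(p,q) \cong \UU(q,p)$ we may impose $r \leq s$, and $r+s \leq n$ records the rank. Since multiplication by $i$ is a bijection between Hermitian and skew-Hermitian forms intertwining the two congruence actions, the skew-Hermitian forms are represented by $iI_{r,s}$. This gives the two complex rows, both producing homotopes of $\UU(p,q)$. Over $\HHH$, the quaternionic Sylvester theorem again classifies Hermitian forms by signature $I_{r,s}$, yielding the groups $\UU_n(I_{r,s},\HHH)$ of row b), i.e.\ the homotopes of $\Sp(p,q)$. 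The genuinely new case is the skew-Hermitian quaternionic one (row a)): each diagonal entry is then a pure imaginary quaternion, and conjugation by a unit quaternion together with a real rescaling normalizes it to $i$, so that \emph{no signature invariant survives} and a skew-Hermitian quaternionic form is determined by its rank alone; this produces the single family $i1_r$ with $r \leq n$.

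Finally I would match the non-degenerate members with the standard classical groups: the non-degenerate Hermitian complex and quaternionic forms of signature $(p,q)$ give $\UU(p,q)$ and $\Sp(p,q)$, while the non-degenerate skew-Hermitian quaternionic form $i1_n$ gives the group usually written $\OO^*(2n)$, in accordance with the identification $\uu_n(1;\widetilde\HHH) = \so^*(2n)$ recorded in the Introduction. The passage to $\widetilde\HHH$ is precisely the elementary fact that a skew-Hermitian form for $\overline{\cdot}$ is a Hermitian form for the split involution $\widetilde{\cdot}$, which is why row a) is phrased over $\widetilde\HHH$ rather than $\HHH$.

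The step I expect to be the main obstacle is not a deep one but a careful one. First, via the $\tau$/$\tau'$ duality of Theorem \ref{unitalMaintheorem}, one must keep straight in each of the four rows whether the parameter is Hermitian or skew-Hermitian, since this is what dictates whether the invariant is a full signature or merely a rank. Second, in the skew-Hermitian quaternionic case one must verify in detail that the normalization of the pure-imaginary diagonal entries genuinely erases every invariant except rank; this is the one place where the argument departs from the complex and Hermitian patterns, and where the appearance of $\widetilde\HHH$ must be justified. As in the orthogonal/symplectic proposition, one should also note that distinct parameter orbits may produce isomorphic groups (for instance after rescaling $a$ by an invertible scalar), so the listed families exhaust the homotopes but need not be pairwise non-isomorphic.
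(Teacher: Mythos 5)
Your proposal is correct and follows exactly the route the paper intends: the paper's own "proof" of this proposition is just the remark that it "is established in the same way as above," i.e., by the orbit/congruence argument of the orthogonal–symplectic proposition, which is precisely what you carry out (congruence invariance of $\UU_n(A;\K)$, Sylvester/Witt classification of Hermitian and skew-Hermitian forms over $\C$ and $\HHH$, the rank-only normalization $i1_r$ in the quaternionic skew-Hermitian case, and the identification of the non-degenerate members with $\UU(p,q)$, $\Sp(p,q)$, $\OO^*(2n)$). Your filled-in details, including the passage to $\widetilde\HHH$ and the caveat that distinct orbits may yield isomorphic groups, are accurate and are exactly what the paper leaves implicit.
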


Over more general base fields or rings the classification
of non-degenerate torsors is essentially equivalent to the classification
of involutions of associative algebras
 -- see \cite{KMRS98} for this vast topic.

 \subsection{Hilbert Grassmannian}\label{Hilbert}

A fairly straightforward infinite dimensional generalization
of the preceding situation is the following:
$W = H \oplus H$, where $H$ is a Hilbert
space $W$ over $\bB=\C$ or $\R$, and $\beta$ corresponding to
the matrix
$$
B = \Omega_H = \begin{pmatrix} 0 & 1_H \cr -1_H & 0 \end{pmatrix}
\quad \mbox{or} \quad
B = \begin{pmatrix} 0 & 1_H \cr 1_H & 0 \end{pmatrix} .
$$
In this case we may work with the Grassmannian of all \emph{closed}
subspaces of $W$, and it easily seen that all arguments from the
proof of Theorem \ref{Th:3.2} go through, showing that the
orthocomplementation map of $\beta$ defines an involution of this
geometry. We get infinite dimensional analogs of the classical
groups, imbedded, together with their homotopes, in Hilbert-Lagrangian
manifolds. Variants of these constructions can be applied to
\emph{restricted Grassmannians} and \emph{restricted unitary groups}
in the sense of \cite{PS86}.

\section{Semitorsors}


In this chapter we extend our theory from \emph{restricted} involutions to
``globally defined'' involutions. Roughly speaking, the restricted product map $\Gamma$ and
the corresponding restricted involutions deal with connected geometries (the
``restricted'' theory developed so far is, in spite of its algebraic flavor, analoguous to the
correspondence between Lie algebras and \emph{connected} Lie groups), whereas
the global product map $\Gamma$ and its global involutions rather correspond
to replacing connected Lie groups by \emph{algebraic groups}.

\subsection{Semigroup completion of general linear groups}

Let $W$ be a right $\bB$-module and $\cX$ its Grassmannian.
In \cite{BeKi09} we have shown that the torsors $U_{ab} \subset \cX$
admit a ``semitorsor completion'': the ternary law $(xyz)$ from
$U_{ab}$ extends to the whole of $\cX$, given by the formula
\begin{equation}
\Gamma(x,a,y,b,z):=
\Bigsetof{\omega \in W}
{\begin{array}{c}
\exists \xi \in x,
\exists \alpha \in a,
\exists \eta \in y,
\exists \beta \in b,
\exists \zeta \in z : \\
\omega = \zeta + \alpha
= \zeta + \eta + \xi
= \xi + \beta
\end{array}}\, .
\end{equation}
This formula defines a quintary ``product map'' $\Gamma:\cX^5 \to \cX$
having the following remarkable properties:
for any fixed pair $(a,b)$, the partial map
$(xyz):=\Gamma(x,a,y,b,z)$ satisfies the \emph{para-associative law}
\begin{equation}
(xy(zuv))= (x(uzy)v)=((xyz)uv),
\end{equation}
and it is invariant under the Klein 4-group acting on $(x,a,b,z)$:
\begin{equation}
\Gamma(x,a,y,b,z)=\Gamma(a,x,y,z,b)=\Gamma(z,b,y,a,x).
\end{equation}
We say that, for $a,b$ fixed, $\cX$ with $(xyz)=\Gamma(x,a,y,b,z)$ is
a \emph{semitorsor}, denoted by $\cX_{ab}$ (for fixed $y$, it is in
particular a semigroup), and $\cX_{ba}$ is its \emph{opposite semitorsor}.
For simplicity, we are not going to consider here the globally defined dilation maps
$\Pi_r$ from \cite{BeKi09}; in other words, for the moment we look at $\cX$
as an associative geometry defined over $\Z$ (in fact, one has to be very careful
with the globally defined maps $\Pi_r$ as soon as $r$ or $1-r$ is not invertible;
in order to keep this work in reasonable bounds we postpone a more detailed
discussion of these problems).

\begin{definition}
An \emph{involution} of the Grassmannian geometry
$\cX=\Gras(W)$ is a bijection $\tau:\cX \to \cX$ of order $2$ such that,
for all $x,a,y,b,z \in \cX$, without any restriction by transversality conditions,
$$
\tau (\Gamma(x,a,y,b,z)=\Gamma(\tau(z),\tau(a),\tau(y),\tau(b),\tau(x)) \, .
$$
\end{definition}
The following lemma is proved exactly as Lemma \ref{clue}:

\begin{lemma}\label{clue'} Let $\tau:\XX \to \XX$ be an
involution
of the Grassmannian geometry $\XX = \Gras_\BB(W)$, let
$\YY=\XX^\tau$ and $a \in \cX$.
Then
$\tau$ induces a semitorsor-automorphism of
$\cX_{a,\tau(a)}$. In particular, the fixed point set
$\cY$
is a subsemitorsor of $\cX_{a,\tau(a)}$.
If $a  \in \cY$, then the semitorsor $\cX_{a,\tau(a)} \cap \cY$ is abelian.
\end{lemma}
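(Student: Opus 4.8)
The plan is to transcribe the proof of Lemma~\ref{clue} into the global setting, where the bookkeeping of transversality conditions simply disappears. The underlying set of the semitorsor $\cX_{a,\tau(a)}$ is all of $\cX$, and $\tau$ is by hypothesis a bijection of $\cX$; so, unlike in the restricted case, there is no subset $C_{a,\tau(a)}$ whose stability under $\tau$ must be verified. The only content is to check that $\tau$ intertwines the ternary law $(xyz):=\Gamma(x,a,y,\tau(a),z)$ with itself.

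First I would establish the automorphism property by a direct computation from the two structural identities at hand: the defining relation of a global involution and the Klein $4$-group invariance of $\Gamma$. Applying the involution relation with middle entries $(a,\tau(a))$ and using $\tau^2=\id$ gives
\[
\tau\bigl((xyz)\bigr)=\tau\bigl(\Gamma(x,a,y,\tau(a),z)\bigr)
=\Gamma\bigl(\tau(z),\tau(a),\tau(y),a,\tau(x)\bigr),
\]
and then the invariance $\Gamma(x,a,y,b,z)=\Gamma(z,b,y,a,x)$ (swap the first entry with the last and the second with the fourth) rewrites the right-hand side as $\Gamma(\tau(x),a,\tau(y),\tau(a),\tau(z))=(\tau(x)\,\tau(y)\,\tau(z))$. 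This shows $\tau$ is a semitorsor automorphism of $\cX_{a,\tau(a)}$.

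The subsemitorsor claim would then be immediate: if $x,y,z\in\cY$, so that $\tau(x)=x$, $\tau(y)=y$, $\tau(z)=z$, the automorphism property gives $\tau((xyz))=(xyz)$, whence $(xyz)\in\cY$; thus $\cY$ is closed under the ternary law, as is any fixed-point set of an automorphism. For the last assertion, I would simply note that when $a\in\cY$ one has $\tau(a)=a$, so the ternary law reads $(xyz)=\Gamma(x,a,y,a,z)$, and the same Klein invariance $\Gamma(x,a,y,a,z)=\Gamma(z,a,y,a,x)$ gives $(xyz)=(zyx)$ for all $x,y,z$. Hence $\cX_{a,a}$ is abelian, and in particular so is its subsemitorsor $\cX_{a,a}\cap\cY$.

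I expect no real obstacle here beyond care with the symmetry identities: the para-associative law for $\Gamma$ already guarantees that $(xyz)$ defines a semitorsor, and the two displayed relations do all the remaining work. The one point worth flagging is that in the restricted Lemma~\ref{clue} the abelian conclusion came with an affine-space structure, whereas here---having deliberately set aside the globally defined dilations $\Pi_r$---I would claim only commutativity of the ternary law, which is exactly what the Klein invariance delivers.
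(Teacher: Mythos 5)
Your proposal is correct and is essentially the paper's own proof: the paper disposes of Lemma \ref{clue'} with the single remark that it "is proved exactly as Lemma \ref{clue}," and your transcription—applying the defining relation of a global involution with middle entries $(a,\tau(a))$, flipping the result back with the Klein-invariance $\Gamma(x,a,y,b,z)=\Gamma(z,b,y,a,x)$, and deducing commutativity from $\tau(a)=a$ via the same invariance—is precisely what that remark entails, with the transversality bookkeeping correctly observed to be vacuous. Your closing caveat that only commutativity (not the affine structure of the restricted Lemma \ref{clue}(iii)) survives in the global setting is also consistent with the paper's decision to set aside the global dilations $\Pi_r$.
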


Since the globally defined product map $\Gamma$ encodes the lattice structure of $\Gras(W)$,
an involution $\tau$ induces an \emph{involution of the underlying lattice} (\cite{BeKi09},
Theorem 2.4 and Section 3.1).
Hence the condition that $\tau$ is a lattice involution
is \emph{necessary}, and thus orthocomplementation maps are the natural candidates.
Our tool for proving that they indeed define involutions
is the notion of \emph{generalized projection}, which might be of independent
interest for the theory of linear relations.

\subsection{Generalized projections}
Linear operators $f \in \End_\bB(W)$ are generalized by
 \emph{linear relations in $W$}, i.e., submodules
$F \subset W \oplus W$. Following standard terminology
(see, e.g., \cite{Ner96}, \cite{Cr98}), \emph{domain}, \emph{image},
\emph{kernel} and \emph{indefiniteness} of $F$ are the subspaces
defined by
$$
\dom F := \pr_1 F, \quad \im F := \pr_2 F, \quad
\ker F := F \cap (W \times 0), \quad
\indef F := F \cap (0 \times W)
$$
with $\pr_i:F \to W$ the two projections.
For any $a,b \in \cX$,
define the linear relation $P_x^a \subset W \oplus W$,
called a \emph{generalized projection}, by
\begin{equation}
P_x^a := \big\{ (\zeta,\omega) | \, \omega \in x, \, \omega - \zeta \in a \big\}.
\end{equation}
Note that
$$
 \im P_x^a= x, \quad
\ker P_x^a= a, \quad
\indef P_x^a= a \land x, \quad
\dom P_x^a = x \lor a,
$$
and that, if $a \top x$, then $P_x^a$ is the graph of the projection
denoted previously by $P_x^a$, so
there should be no confusion with preceding notation.
We denote the \emph{space of generalized projections} by
$$
\cP: =  \{ P_x^a | \, x,a \in \cX \} \subset \Gras(W \oplus W).
$$
The map
$$
\cX \times \cX \to \cP, \quad (a,x) \mapsto P_x^a
$$
is a bijection with inverse $P \mapsto (\ker P,\im P)$.
Transversal pairs $(x,a)$ correspond to ``true'' operators (single
valued and everywhere defined).
%
%

\begin{lemma}\label{Idem}
The linear relation
$P_x^a$ is idempotent:
$P_x^a \circ P_x^a =P_x^a$.
\end{lemma}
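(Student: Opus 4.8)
The plan is to unwind the definition of composition of linear relations and to reduce everything to the fact that $a$ is a submodule. Recall that for linear relations $F,G \subseteq W \oplus W$ one has $(\zeta,\omega) \in F \circ G$ precisely when there is an intermediate vector $\eta \in W$ with $(\zeta,\eta) \in G$ and $(\eta,\omega) \in F$. Taking $F = G = P_x^a$ and using the defining condition
\[
(\zeta,\omega) \in P_x^a \iff \omega \in x \ \text{and}\ \omega - \zeta \in a,
\]
membership $(\zeta,\omega) \in P_x^a \circ P_x^a$ amounts to the existence of some $\eta \in W$ satisfying $\eta \in x$, $\eta - \zeta \in a$, $\omega \in x$, and $\omega - \eta \in a$. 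I would then prove the two inclusions separately.

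For the inclusion $P_x^a \circ P_x^a \subseteq P_x^a$, I would start from such an $\eta$ and observe that $\omega \in x$ holds directly, while $\omega - \zeta = (\omega - \eta) + (\eta - \zeta)$ lies in $a$ because $a$, being a submodule, is closed under addition. Hence $(\zeta,\omega) \in P_x^a$, as required.

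For the reverse inclusion $P_x^a \subseteq P_x^a \circ P_x^a$, given $(\zeta,\omega) \in P_x^a$ (so $\omega \in x$ and $\omega - \zeta \in a$), the natural choice of intermediate vector is $\eta := \omega$: then $\eta \in x$, $\eta - \zeta = \omega - \zeta \in a$, and $\omega - \eta = 0 \in a$ since $0 \in a$. This exhibits $(\zeta,\omega)$ in the composite, and the two inclusions together give the claim.

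I do not expect a genuine obstacle here: the only structural inputs are that the submodule $a$ is closed under addition and contains $0$, and the sole point requiring a little care is matching the convention for composing linear relations. That convention is in fact irrelevant in this case, since both factors coincide, so the computation is symmetric in the two occurrences of $P_x^a$ and the same intermediate condition results either way.
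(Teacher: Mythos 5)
Your proof is correct and is essentially identical to the paper's own argument: both unwind the composition of relations, obtain the inclusion $P_x^a \circ P_x^a \subseteq P_x^a$ from the decomposition $\omega - \zeta = (\omega-\eta)+(\eta-\zeta)$ and closure of the submodule $a$ under addition, and obtain the reverse inclusion by choosing the intermediate vector to be $\omega$ itself. Nothing further is needed.
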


\begin{proof} By definition of composition,
$$
P_x^a \circ P_x^a = \{ (u,w) | \exists v \in W:
v \in x, u-v \in a, w \in x, v-w \in a \}.
$$
Since $w \in x$ and $w-u=(w-v)+(v-u) \in a$, we have
$P_x^a  \circ P_x^a \subset P_x^a$.
For the other inclusion, let $(u',w') \in P_x^a$,
so $w' \in x$, $w'-u'  \in a$.
Let $u:=u'$, $w:= v:=w'$; then
$v,w\in x$ and $u-v = u' - w' \in a$, $v-w =0 \in a$,
whence $(u',w') \in P_x^a \circ P_x^a$.
\end{proof}

\begin{lemma} \label{conjugation}
The set $\cP$ of generalized projections is stable under
``conjugation'' by linear relations in the following sense:
 for all linear
relations $F \subset W \oplus W$ and all $c,z \in \cX$, we have
$$
F \circ P^c_z \circ F^{-1} = P^{F(c)}_{F(z)}.
$$
\end{lemma}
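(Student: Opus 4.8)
The plan is to prove the set equality by a direct double inclusion, working entirely from the defining formulas. Recall that composition of linear relations means that $(u,w) \in G \circ H$ holds exactly when there is a bridging element $v$ with $(u,v) \in H$ and $(v,w) \in G$ (as in the proof of Lemma \ref{Idem}), that $F^{-1} = \{(w,v) \mid (v,w) \in F\}$, and that $F(c)$, $F(z)$ denote the images of the submodules $c$ and $z$ under the relation $F$. Unwinding the triple composition, I would first record that $(u,w) \in F \circ P^c_z \circ F^{-1}$ holds if and only if there exist $v_1, v_2 \in W$ with $(v_1,u) \in F$, $v_2 \in z$, $v_2 - v_1 \in c$, and $(v_2,w) \in F$. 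The target set $P^{F(c)}_{F(z)}$ consists, by definition, of those pairs $(u,w)$ with $w \in F(z)$ and $w - u \in F(c)$. The only structural property of $F$ that enters is that it is a submodule of $W \oplus W$, hence an additive subgroup closed under subtraction.

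For the inclusion $\subseteq$ I would take a pair $(u,w)$ together with the witnesses $v_1, v_2$ above. Since $v_2 \in z$ and $(v_2, w) \in F$, we immediately get $w \in F(z)$. Subtracting the two memberships $(v_2, w) \in F$ and $(v_1, u) \in F$ gives $(v_2 - v_1,\, w - u) \in F$; as $v_2 - v_1 \in c$, this yields $w - u \in F(c)$. Hence $(u,w) \in P^{F(c)}_{F(z)}$, and this direction is essentially immediate.

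The reverse inclusion $\supseteq$ is where the one genuine idea enters, and it is the step I expect to require the most care. Starting from $(u,w)$ with $w \in F(z)$ and $w - u \in F(c)$, I would extract a witness $v_2 \in z$ with $(v_2, w) \in F$ (from $w \in F(z)$) and a witness $d \in c$ with $(d,\, w - u) \in F$ (from $w - u \in F(c)$). The point is then to set $v_1 := v_2 - d$: closure of $F$ under subtraction gives $(v_2 - d,\, w - (w-u)) = (v_1, u) \in F$, while $v_2 - v_1 = d \in c$ and $(v_2,w) \in F$ hold by construction. These are exactly the four conditions characterizing membership in $F \circ P^c_z \circ F^{-1}$, so the two sets coincide. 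Note that no transversality or adjointability hypotheses are needed anywhere, which is precisely why the statement is valid for arbitrary linear relations $F$ rather than only for honest operators.
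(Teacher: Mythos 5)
Your proof is correct and follows essentially the same route as the paper's: the same unwinding of the triple composition into the four conditions $(v_1,u)\in F$, $v_2\in z$, $v_2-v_1\in c$, $(v_2,w)\in F$, the same use of closure of $F$ under subtraction for the forward inclusion, and the same witness $v_1:=v_2-d$ (the paper's $\beta:=\gamma-\eta$) for the reverse inclusion. Nothing is missing.
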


\begin{proof}
By definition of composition and inverse,
\begin{eqnarray*}
F \circ P^c_z \circ F^{-1} &=&
\{ (\alpha,\delta) | \, \exists \beta, \gamma \in W:
\, (\alpha,\beta) \in F^{-1}, (\beta,\gamma) \in P^c_z,
(\gamma,\delta) \in F \}
\cr
&=& \{ (\alpha,\delta) | \, \exists \beta \in W, \gamma \in z:
\, (\beta,\alpha) \in F,
(\gamma,\delta) \in F,
 \gamma - \beta \in c \}
\end{eqnarray*}
These conditions imply that $\delta \in Fz$ and
$(\beta,\alpha)-(\gamma,\delta) \in F$;
since $(\beta - \gamma) \in c$, this implies also
$(\alpha - \delta) \in Fc$. It follows that
$(\alpha,\delta) \in P^{F(c)}_{F(z)}$.

Conversely, let $(\alpha,\delta) \in P^{F(c)}_{F(z)}$,
i.e., $\delta \in F(z)$, $\alpha - \delta \in F(c)$,
so there exists $\gamma \in z$ with $(\gamma,\delta) \in F$ and
$\eta \in c$ with $(\eta,\alpha - \delta) \in F$.
Let $\beta := \gamma - \eta$, so
$\gamma - \beta \in c$ and
$$
(\beta, \alpha) = (\gamma,\delta) - (\eta,\delta - \alpha) \in F ,
$$
whence $(\alpha,\delta) \in F \circ P^c_z \circ F^{-1}$.
%
%
\end{proof}

For the next statements, recall (\cite{Ar61}, \cite{Cr98}) the following general definitions
concerning linear relations.
For a linear relation $F \subset W \oplus W$ and $z \in \cX$,
the \emph{image of $z$ under $F$} is
$$
Fz := F(z):=
 \{ \delta \in W | \, \exists \gamma \in z: (\gamma,\delta) \in F \}
= {\pr}_2 ({\pr}_1)^{-1}(z),
$$
and  the
\emph{difference of linear relations} $F,G \subset \Gras(W \oplus W)$, is
$$
F - G := \{ (\xi,\omega) | \exists \alpha, \beta \in W:
(\xi,\alpha) \in F, (\xi,\beta) \in G, \omega=\alpha - \beta \}
$$
Remark: This difference 
can also be
written in our language in terms of the associative geometry
$(\Gras(W \oplus W),\hat \Gamma)$, with its usual base points $o^+,o^-$, as
$$
F - G := \hat \Gamma (F,o^-,G,o^-,o^+),
$$
the difference of $F$ and $G$ in the linear space $(C_{o^-},o^+)$.

\begin{lemma} \label{opp} For all $a,x \in \cX$,
$1 - P^a_x = P^x_a$.
\end{lemma}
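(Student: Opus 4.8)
The plan is to unfold both sides directly from the definition of the difference of linear relations and the definition of a generalized projection, and to check that the two resulting membership conditions coincide. This is the relational analogue of the operator identity $1 - P = Q$ for complementary projections, so the only substance is careful bookkeeping.

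First I would apply the formula for $F - G$ with $F = 1$ (the identity relation, i.e.\ the diagonal $\{(\omega,\omega) \mid \omega \in W\}$) and $G = P^a_x$. Since $(\xi,\alpha) \in 1$ forces $\alpha = \xi$, a pair $(\xi,\omega)$ lies in $1 - P^a_x$ exactly when there is some $\beta \in W$ with $(\xi,\beta) \in P^a_x$ and $\omega = \xi - \beta$. Reading off the definition of $P^a_x$, the condition $(\xi,\beta) \in P^a_x$ means $\beta \in x$ and $\beta - \xi \in a$. Thus
$$
1 - P^a_x = \{ (\xi,\omega) \mid \exists \beta \in x:\ \beta - \xi \in a,\ \omega = \xi - \beta \}.
$$

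It then remains to match this with $P^x_a = \{ (\xi,\omega) \mid \omega \in a,\ \omega - \xi \in x \}$. For the forward inclusion I would take $(\xi,\omega) \in 1 - P^a_x$ with witness $\beta$ and compute $\omega - \xi = -\beta \in x$ (as $\beta \in x$ and $x$ is a submodule) together with $\omega = \xi - \beta = -(\beta-\xi) \in a$, so that $(\xi,\omega) \in P^x_a$. For the reverse inclusion, given $(\xi,\omega) \in P^x_a$, I would exhibit the witness $\beta := \xi - \omega$; then $\beta = -(\omega - \xi) \in x$, $\beta - \xi = -\omega \in a$, and trivially $\omega = \xi - \beta$, so $(\xi,\omega) \in 1 - P^a_x$.

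There is no genuine obstacle here: the only thing to watch is the sign conventions and which coordinate plays the role of domain ($\zeta$) versus image ($\omega$) in $P^a_x$ as opposed to $P^x_a$, since the two factors swap roles. One could alternatively derive the identity as a special case of the conjugation formula in Lemma \ref{conjugation} or of the structurality remark, but the direct computation above is shortest and entirely self-contained.
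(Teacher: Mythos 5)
Your proof is correct and is essentially the paper's own argument: the paper's one-line proof states exactly the equivalence you verify, namely that $\omega = \xi - \beta$ with $\beta \in x$, $\beta - \xi \in a$ holds if and only if $\omega \in a$ and $\omega - \xi \in x$. You have simply spelled out the definition-unfolding and the two inclusions (with the witness $\beta = \xi - \omega$) that the paper leaves implicit.
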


\begin{proof}
 $\omega = u - \omega'$ with $\omega' \in x$, $\omega' - u \in a$
is equivalent to $\omega \in a$ with $\omega - u \in x$.
\end{proof}

\begin{theorem} \label{Newgamma}
Let $\Gamma$ be the multiplication map of the
 Grassmann geometry $\cX$.
\begin{enumerate}
\item
For all $(x,a,y,b,z) \in \cX^5$,
$$
\Gamma(x,a,y,b,z)=
(1 - P^x_a P^b_y) (z) = (P^a_x - P^z_b )(y) .
$$
In other words, the left multiplication operator $L_{xayb}$ in the geometry
$(\cX,\Gamma)$ is
induced by the linear relation $1 - P^x_a P^b_y$,
and the middle multiplication operator $M_{xabz}$
 is induced by the linear relation
$P^a_x - P^z_b$. Thus we can (and will) define,
extending the operator notation from Chapter 1, the linear relations
$$
L_{xayb}:= 1 - P^x_a P^b_y , \quad
M_{xabz}:= P^a_x - P^z_b .
$$
\item For all $(x,a,z) \in \cX^3$,
$$
P^a_x(z) = L_{xaax}(z)= \Gamma(x,a,a,x,z)= x \land (a \lor z) .
$$
\item
For all $a,b,x,y \in \cX$, using Notation from part (1),
$$
L_{xayb}^{-1}(z)=L_{yaxb}(z), \quad
M_{xabz}^{-1}(y)=M_{zabx}(y).
$$
In particular
$$
(P^a_x)\inv(z)=L_{xaax}\inv(x)=L_{aaxx}(z)=\Gamma(a,a,x,x,z)=
a \lor (x \land z).
$$
\end{enumerate}
\end{theorem}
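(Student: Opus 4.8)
The plan is to reduce all three parts to direct computations with linear relations, matched against the quintary formula for $\Gamma$; no transversality will be used. As a preliminary I would eliminate the auxiliary elements in the definition of $\Gamma$: the equalities $\omega=\zeta+\alpha=\zeta+\eta+\xi=\xi+\beta$ force $\alpha=\xi+\eta$ (hence $\xi+\eta\in a$) and $\beta=\eta+\zeta$ (hence $\eta+\zeta\in b$), so that
$$ \Gamma(x,a,y,b,z)=\{\,\xi+\eta+\zeta \mid \xi\in x,\ \eta\in y,\ \zeta\in z,\ \xi+\eta\in a,\ \eta+\zeta\in b\,\}. $$
This symmetric three-term description is what I would compare everything against.

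For part (1) I would unwind the two right-hand sides using the definitions of image, composition and difference of linear relations. A direct computation gives
$$ (P^a_x-P^z_b)(y)=\{\,\alpha-\beta \mid \exists\,s\in y:\ \alpha\in x,\ \alpha-s\in a,\ \beta\in b,\ \beta-s\in z\,\}, $$
and the substitution $\xi=\alpha$, $\eta=-s$, $\zeta=s-\beta$ (with inverse $s=-\eta$, $\alpha=\xi$, $\beta=-(\eta+\zeta)$) identifies this with the three-term set above. Similarly
$$ (1-P^x_aP^b_y)(z)=\{\,w-\gamma \mid w\in z,\ \exists\,\mu\in y:\ \mu-w\in b,\ \gamma\in a,\ \gamma-\mu\in x\,\}, $$
and the substitution $\zeta=w$, $\eta=-\mu$, $\xi=\mu-\gamma$ matches it with the same set. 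This proves both equalities of part (1) and legitimizes the notation $L_{xayb}:=1-P^x_aP^b_y$ and $M_{xabz}:=P^a_x-P^z_b$.

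For part (2), part (1) already gives $\Gamma(x,a,a,x,z)=L_{xaax}(z)$ with $L_{xaax}=1-P^x_a\circ P^x_a$. By Lemma \ref{Idem} the relation $P^x_a$ is idempotent, so $P^x_a\circ P^x_a=P^x_a$, and by Lemma \ref{opp} we have $1-P^x_a=P^a_x$; hence $L_{xaax}=P^a_x$. Evaluating the image directly, $P^a_x(z)=\{\omega\in x \mid \omega\in z+a\}=x\cap(a+z)=x\land(a\lor z)$, which completes part (2).

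For part (3) I would prove the two inverse identities at the level of relations; the stated image identities then follow by applying both sides to $y$, resp.\ $z$. From
$$ M_{xabz}=\{\,(\xi,\omega)\mid \exists\,\alpha\in x,\ \beta\in b:\ \alpha-\xi\in a,\ \beta-\xi\in z,\ \omega=\alpha-\beta\,\}, $$
a pair $(\omega,\xi)$ lies in $M_{xabz}^{-1}$ exactly when such $\alpha,\beta$ exist with input $\xi$ and $\omega=\alpha-\beta$; then $\alpha':=\xi-\beta$ and $\beta':=-\beta$ lie in $z$ and $b$ and satisfy $\alpha'-\omega\in a$, $\beta'-\omega\in x$, $\alpha'-\beta'=\xi$, so $(\omega,\xi)\in M_{zabx}$. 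The reverse inclusion is the same computation with $x$ and $z$ interchanged, giving $M_{xabz}^{-1}=M_{zabx}$, and $L_{xayb}^{-1}=L_{yaxb}$ is proved by the identical substitution. The ``in particular'' statement then combines parts (1)--(3) with the direct evaluation $(P^a_x)^{-1}(z)=\{\zeta\mid \exists\,\omega\in z\cap x:\ \zeta-\omega\in a\}=(x\land z)\lor a$. The only real obstacle throughout is bookkeeping: each right-hand side is a set cut out by nested existentials whose signs come from the difference and composition of relations, and one must produce sign-correct substitutions matching them term-by-term with the symmetric form of $\Gamma$. There is no conceptual difficulty beyond this, which is precisely the point: the operator identities of Chapter 1 survive verbatim once the projections are read as genuine linear relations.
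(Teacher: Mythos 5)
Your proof is correct; I checked the sign bookkeeping in all three parts, including the preliminary reduction of $\Gamma$ to the symmetric form $\{\xi+\eta+\zeta \mid \xi\in x,\ \eta\in y,\ \zeta\in z,\ \xi+\eta\in a,\ \eta+\zeta\in b\}$, which is a valid consequence of the definition. The core of your part (1) is the same computation as the paper's — unwind $(1-P^x_aP^b_y)(z)$ and $(P^a_x-P^z_b)(y)$ into sets cut out by existential conditions — but you differ in what you match them against and in how much you delegate to Part I. The paper identifies the two computed sets with the ``$(a,y)$-description'' and ``$(y,b)$-description'' of $\Gamma$ quoted from \cite{BeKi09}, settles the lattice identity $P^a_x(z)=x\land(a\lor z)$ in part (2) by citing Theorem 2.4(vi) of \cite{BeKi09}, and disposes of part (3) entirely by declaring it a restatement of Theorem 2.5 of \cite{BeKi09}. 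You instead match everything against the symmetric form by explicit substitutions, compute $P^a_x(z)=x\cap(a+z)$ and $(P^a_x)^{-1}(z)=(x\cap z)+a$ by hand, and prove part (3) from scratch. Your route buys self-containedness within this paper, and in part (3) it delivers slightly more than the statement asks: you establish $M_{xabz}^{-1}=M_{zabx}$ and $L_{xayb}^{-1}=L_{yaxb}$ as identities of linear relations, not merely of images of $y$ resp.\ $z$; the relation-level version is what is actually invoked later (e.g.\ in the proof of Theorem \ref{semitorsor}, where $L^{-1}$ is substituted inside compositions and adjoints). What the paper's route buys is brevity and continuity with Part I, whose alternative descriptions of $\Gamma$ are needed elsewhere anyway. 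Two cosmetic caveats, neither affecting correctness: your phrase ``identical substitution'' for the $L$-inverse is slightly glib, since the required substitution is the analogous sign-flip (take $\omega'=-\omega$ and $\mu'=\mu-\omega$ in the data for $L_{xayb}^{-1}$) rather than literally the same one; and the reverse inclusion for $M_{xabz}^{-1}=M_{zabx}$ does need the one-line remark you give (apply the forward inclusion with $x$ and $z$ interchanged, then take inverses).
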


\begin{proof} (1)
Note that, under certain transversality conditions ensuring that
the linear relations in question are indeed graphs of linear
operators, the claim has already been proved in \cite{BeKi09}.
Let us prove it now in the general situation.
\begin{eqnarray*}
P^x_a \circ P^b_y &=&
\{ (\zeta,\omega) | \exists \eta \in y :
\zeta - \eta \in b, \omega - \eta \in x, \omega \in a \},
\cr
1 - P^x_a P^b_y &=&
\{ (\zeta,\omega') | \exists \omega \in W: (\zeta,\omega) \in  P^x_a P^b_y,
\omega' = \zeta - \omega \}
\cr
&=&
 \{ (\zeta,\omega') |
\exists \omega \in W, \exists \eta \in y :
\zeta - \eta \in b, \omega - \eta \in x, \omega \in a,
\omega' = \zeta - \omega \}
\end{eqnarray*}
whence
\begin{eqnarray*}
(1 - P^x_a P^b_y)(z) &=&  \{ \omega' \in W |
\exists \zeta \in z, \exists \alpha \in a, \exists \eta \in y :
\zeta - \eta \in b, \alpha - \eta \in x,
\omega'= \zeta - \alpha \}
\cr
&=& \{ \omega' \in W |
 \exists \alpha \in a, \exists \eta \in y :
\omega' + \alpha - \eta \in b, \alpha - \eta \in x,
\omega' + \alpha \in z   \}
\end{eqnarray*}
According to the ``$(a,y)$-description'' from \cite{BeKi09}, this
set is indeed equal to $\Gamma(x,a,y,b,z)$.
Similarly,
\begin{eqnarray*}
P^a_x - P^z_b &=&
\{ (\eta,\omega) | \exists u,v : (\eta,u) \in P^a_x, (\eta,v) \in P^z_b,
\omega = u -v \}
\cr
&=& \{ (\eta,\omega) | \exists u \in x, \exists v \in b :
u - \eta \in a, v - \eta \in z,
\omega = u -v \}
\end{eqnarray*}
so that
\begin{eqnarray*}
(P^a_x - P^z_b)(y)
&=& \{ \omega | \exists u \in x, \exists v \in b, \exists \eta \in y :
u - \eta \in a, v - \eta \in z,
\omega = u -v \}
\cr
&=& \{ \omega |  \exists \beta \in b, \exists \eta \in y :
\beta+ \omega - \eta \in a, \beta - \eta \in z,
\beta+ \omega \in x \}
\end{eqnarray*}
Again, by the $(y,b)$-description, this equals  $\Gamma(x,a,y,b,z)$.

(2) Using Lemmas \ref{Idem} and \ref{opp},
$\Gamma(x,a,a,x,z)=(1 - P^x_a P^x_a)(z) = (1 -P^x_a)(z) =  P^a_x(z)$,
proving the first equality. The second equality is proved in \cite{BeKi09}, Theorem 2.4 (vi).

(3)
This is a restatement of Theorem 2.5 from \cite{BeKi09}.
\end{proof}

%

\subsection{Orthocomplementation maps and adjoints}

\begin{theorem} \label{semitorsor}
Assume
$\beta$ is a non-degenerate Hermitian or skew-Hermitian  form on the right $\bB$-module
$W$, and let $\cX=\Gras(W)$.
\begin{enumerate}
\item
For all $x,a,y,b,z \in \cX$, we have the inclusion
\begin{equation*}
\Gamma(x^\perp,b^\perp,y^\perp,a^\perp,z^\perp) \subset
\big( \Gamma(x,a,y,b,z) \big)^\perp .
\label{inv'}
\end{equation*}
\item
Assume that $\bB$ is a skew-field and $W = \bB^n$.
Then equality holds in (\ref{inv'}), and
the orthocomplementation map
 is an involution of $\cX$.
  \end{enumerate}
\end{theorem}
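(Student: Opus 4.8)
\emph{Plan.}
For part (1) I would avoid linear relations altogether and argue directly from the set-theoretic description of $\Gamma$. Unwinding the defining formula, $\omega \in \Gamma(x,a,y,b,z)$ exactly when there are $\xi \in x$, $\eta \in y$, $\zeta \in z$ with $\xi+\eta \in a$, $\eta+\zeta \in b$ and $\omega = \xi+\eta+\zeta$; likewise $\omega' \in \Gamma(x^\perp,b^\perp,y^\perp,a^\perp,z^\perp)$ exactly when there are $\xi' \in x^\perp$, $\eta' \in y^\perp$, $\zeta' \in z^\perp$ with $\xi'+\eta' \in b^\perp$, $\eta'+\zeta' \in a^\perp$ and $\omega' = \xi'+\eta'+\zeta'$. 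To show $\beta(\omega',\omega)=0$ I expand it into the nine terms $\beta(p',p)$, $p' \in \{\xi',\eta',\zeta'\}$, $p \in \{\xi,\eta,\zeta\}$. The three ``diagonal'' terms vanish because $\xi' \in x^\perp$, $\eta' \in y^\perp$, $\zeta' \in z^\perp$ are orthogonal to $x \ni \xi$, $y \ni \eta$, $z \ni \zeta$. The six off-diagonal terms are killed by just two relations: $\beta(\xi'+\eta',\eta+\zeta)=0$ (since $\xi'+\eta' \in b^\perp$ and $\eta+\zeta \in b$) and $\beta(\eta'+\zeta',\xi+\eta)=0$ (since $\eta'+\zeta' \in a^\perp$ and $\xi+\eta \in a$); adding these two and discarding the already-vanishing $\beta(\eta',\eta)$ occurring in each reproduces exactly the sum of the six off-diagonal terms. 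Hence $\beta(\omega',\omega)=0$, giving the inclusion. Note that no finiteness or skew-field hypothesis is used, and the Hermitian/skew-Hermitian symmetry enters only to identify left and right orthogonal complements.

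For part (2) the inclusion of part (1) is already half the statement, so everything reduces to the reverse inclusion. Here I would pass to dimensions: over a skew-field $\B$ with $W=\B^n$ and $\beta$ non-degenerate, every subspace $S$ satisfies $\dim S^\perp = n - \dim S$ and $S^{\perp\perp}=S$. Since both members of the inclusion of part (1) are subspaces and one contains the other, equality is equivalent to the single numerical identity
\[
\dim \Gamma(x^\perp,b^\perp,y^\perp,a^\perp,z^\perp) = n - \dim \Gamma(x,a,y,b,z).
\]
Once this equality of subspaces is established, the ``involution'' claim is immediate: $\perp_\beta$ is a bijection of order two by $S^{\perp\perp}=S$, and combining the now-equality of part (1) with the Klein four-symmetry $\Gamma(x,a,y,b,z)=\Gamma(z,b,y,a,x)$ rewrites it as $\Gamma(x,a,y,b,z)^\perp = \Gamma(z^\perp,a^\perp,y^\perp,b^\perp,x^\perp)$, which is precisely the intertwining condition in the definition of an unrestricted involution.

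To prove the numerical identity I would use the generalized-projection calculus of Theorem~\ref{Newgamma}, writing $\Gamma(x,a,y,b,z)=M_{xabz}(y)$ with $M_{xabz}=P^a_x-P^z_b$, and realize $\Gamma(x,a,y,b,z)$ as the image of the ``solution space''
\[
V := \{\, (\xi,\eta,\zeta) \in x \times y \times z \mid \xi+\eta \in a,\ \eta+\zeta \in b \,\}
\]
under the summation map $\sigma(\xi,\eta,\zeta)=\xi+\eta+\zeta$, so that $\dim \Gamma = \dim V - \dim(V \cap \ker \sigma)$ by rank--nullity. The adjoint data yield an analogous space $V'$ with summation map $\sigma'$, and the content of the identity is that $\sigma'(V')$ is not merely contained in but \emph{equals} $\sigma(V)^\perp$. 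This amounts to the exactness of three operations that are only one-sided inclusions for general linear relations: the adjoint of a generalized projection (namely $(P^a_x)^* = P^{x^\perp}_{a^\perp}$, which in fact holds unconditionally by a direct computation with the form), the adjoint of a difference $P^a_x-P^z_b$, and the passage from a relation to the orthocomplement of its image (the equality $F(y)^\perp = (F^*)^{-1}(y^\perp)$, refining the general inclusion $\supseteq$).

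I expect this last exactness to be the main obstacle, and it is exactly where the hypotheses of part (2) are indispensable. Over a skew-field in finite dimension every submodule is a direct summand and obeys $\dim + \dim^\perp = n$ together with $S^{\perp\perp}=S$, so the modular identities close up and the relevant inclusions become equalities purely by counting dimensions; for infinite-dimensional $W$, or for a genuine ring $\B$, they can be strict — the non-closed-subspace phenomenon already flagged after the definition of $\cX_\beta$ — which is precisely why part (1) survives in full generality while part (2) does not. Accordingly I would first establish these three exactness statements in the finite-dimensional skew-field case, this being the only genuinely new linear-relations input, and then chain them; the remaining dimension arithmetic is routine and yields the displayed identity, hence equality in the inclusion of part (1) and the involution property.
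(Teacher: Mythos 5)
Your part (1) is correct and complete, and it takes a genuinely more elementary route than the paper. The paper derives the inclusion from the calculus of adjoint relations: the inclusions $(P^a_x)^* \supset P^{x^\perp}_{a^\perp}$ and $(G\circ F)^*\supset F^*\circ G^*$, the identity $(1-F)^*=1-F^*$ (Lemma \ref{addition}), and Lemma \ref{innocent} applied to $L_{xayb}=1-P^x_aP^b_y$. Your nine-term expansion of $\beta(\omega',\omega)$ needs none of that machinery: the bookkeeping checks out (the two relations $\beta(\xi'+\eta',\eta+\zeta)=0$ and $\beta(\eta'+\zeta',\xi+\eta)=0$ reproduce exactly the six off-diagonal terms once the vanishing term $\beta(\eta',\eta)$ is discarded), and it visibly uses nothing beyond non-degeneracy and the Hermitian symmetry, exactly as you say. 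Your parenthetical claim that $(P^a_x)^*=P^{x^\perp}_{a^\perp}$ holds as an \emph{equality} unconditionally is also true (specialize the pairs $(v,w)\in P^a_x$, $w\in x$, $w-v\in a$, to $w-v=0$ and to $w=0$: this forces $v'-w'\in x^\perp$ and $w'\in a^\perp$), which is slightly sharper than the one-sided inclusion the paper records in general.

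In part (2), however, the entire weight of your argument rests on an assertion you do not prove, and it is precisely the statement the authors say they could not prove: even for $\bB$ a skew-field and $W=\bB^n$ they write that they ``do not know whether the inclusion from Lemma \ref{innocent} always becomes an equality'', and it is exactly to avoid your third exactness statement $F(y)^\perp=(F^*)^{-1}(y^\perp)$ that they introduce an auxiliary element $c$ and argue through the conjugation formula $F\circ P^c_z\circ F^{-1}=P^{F(c)}_{F(z)}$ of Lemma \ref{conjugation}, comparing kernels and images of the resulting generalized projections. So labelling this step ``routine'' is the gap in your proposal as written. That said, the assertion is true and your dimension-counting strategy does go through: since $F^*$ is the $\Omega$-orthocomplement of $F$, one has that $F^*\cap(W\times z^\perp)$ equals the $\Omega$-orthocomplement of $F+(z\times 0)$, and $\indef (F^*)=(\dom F)^\perp$; applying rank--nullity to the projection $\pr_1$ restricted to $F^*\cap(W\times z^\perp)$ then yields
\[
\dim\,(F^*)^{-1}(z^\perp)
\;=\; n-\dim(\indef F)+\dim(\ker F\cap z)-\dim(\dom F\cap z)
\;=\; n-\dim F(z)\;=\;\dim\,(Fz)^\perp ,
\]
so Lemma \ref{innocent} is an equality for \emph{every} linear relation $F$ in this setting. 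Your second exactness statement, for $(P^a_x-P^z_b)^*$, needs an analogous count (or the equality case of (\ref{Ar}) from \cite{Ar61}, which the paper invokes), and it also works out. With those two computations actually executed, your chain closes and gives a proof of part (2) that is arguably cleaner than the paper's conjugation argument---it even settles the question the authors leave open---but as submitted, part (2) is a plan whose most critical step is missing, not a proof.
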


\begin{proof} We define the \emph{adjoint relation} of a linear relation $F \subset W \oplus W$ by
$$
F^* := \{ (v',w') | \forall (v,w) \in F :
\beta(v',w)=\beta(w',v) \} \subset W \oplus W.
$$
This is the orthocomplement of $F$ with respect to the
``symplectic form'' $\Omega$ on $V \oplus V$ associated to $ \beta$,
$$
\Omega((u,v),(u',v')) = \beta(u,v') - \beta(v,u')
$$
(see  \cite{Ar61}, \cite{Cr98}, Ch.\ III). Note that $*$ and inversion commute.

\begin{lemma} \label{innocent}
For all $F \in
\Gras(W \oplus W)$ and all $z \in \Gras(W)$, we have
 $$
 (Fz)^\perp \supset  (F^*)^{-1} z^\perp .
 $$
\end{lemma}

\begin{proof}
%
Assume $v \in (F^*)\inv z^\top$.
This means there is $u \in W$ with $(v,u) \in F^*$ and $\beta(u,z)=0$.
Hence, for all $(\zeta,\zeta') \in F$ with $\zeta \in z$, we have
$0=\beta(u,\zeta)=\beta(v,\zeta')$.
Thus, whenever $\zeta' \in F(z)$, we have $\beta(v,\zeta')=0$, that is,
$v \in (Fz)^\perp$.
\end{proof}


\begin{lemma}
For any non-degenerate form $\beta$, we have
$$
(P^a_x)^* \supset  P^{x^\perp}_{a^\perp}
$$
and
$$
(P^x_a P^b_y)^*  \supset  (P^b_y)^* (P^x_a)^* \supset P_{b^\perp}^{y^\perp} P_{x^\perp}^{a^\perp}
$$
with equality in all cases under the assumptions of part (2) of the theorem.
\end{lemma}

\begin{proof}
By definition of the adjoint,
\begin{eqnarray*}
(P^a_x)^* &=&
\{ (v',w') | \forall (v,w) \in  P^a_x:
\beta(v',w)=\beta(w',v) \}
\cr
&=&
\{ (v',w') |  w \in x, v-w \in a \Rightarrow
\beta(v',w)=\beta(w',v) \}
\end{eqnarray*}
Now assume $(v',w') \in P^{x^\perp}_{a^\perp}$,
that is, $w' \perp a$, $w'-v' \perp x$.
Then, for all $w \in x$ and $v$ with $v-w \in a$:
$$
\beta(v',w)= \beta(v'-w',w) + \beta(w',w)= \beta(w',w)=
\beta(w',w-v)+\beta(w',v)=\beta(w',v),
$$
whence $(v',w') \in (P^a_x)^* $,
proving the first inclusion.
The inclusion
\begin{equation} \label{Ar}
 (G \circ F)^* \supset F^* \circ G^*
\end{equation}
holds for all linear relations $F,G$,
see \cite{Ar61}, Lemma 3.5, where it is also proved that equality always holds in
the case of finite dimension over a field.

In order to finish the proof, it only remains to show that $(P^a_x)^* = P^{x^\perp}_{a^\perp}$
under the assumptions of part (2) of the theorem. In view of the inclusion just proved,
it es enough to prove that both subspaces in question have the same dimension over $\bB$.
First of all, for every linear relation
$F$, since $\pr_2|_F$ induces an exact sequence
$0 \to \ker F \to F \to \im F \to 0$,
$$
\dim F = \dim (\ker F) + \dim (\im F)
$$
hence
$$
\dim P^a_x = \dim (a) + \dim (x), \quad
\dim P^{x^\perp}_{a^\perp} = \dim (a^\perp) + \dim (x^\perp) .
$$
Since $(P^a_x)^*$ is the orthogonal complement of $P^a_x$
with respect to a non-degenerate form on $W \oplus W$,
$$
\dim (P^a_x)^* = \dim (W \oplus W) - \dim P^a_x  =
\dim W - \dim x + \dim W - \dim a =
\dim P^{x^\perp}_{a^\perp},
$$
proving the claim.
%
\end{proof}

\begin{lemma} \label{addition}
For all linear linear relations $F \subset W \oplus W$:
$$
(1 + F)^* = 1 + F^*, \quad (1-F)^* = 1 - F^* .
$$
\end{lemma}

\begin{proof}
One checks easily that the following two linear isomorphisms of $W \oplus W$
$$
A(v,w)= (v,v+w), \quad D(v,w)=(v,v-w)
$$
preserve the form $\Omega$, and hence they are compatible with orthocomplements with
respect to $\Omega$. The claim follows by observing that
$1+F=A .F$ and $1-F=D.F$ (where the dot denotes the canonical push-forward
action of $\Gl(W \oplus W)$ on linear subspaces).
%
%
\end{proof}
From the preceding two lemmas it follows that
\begin{eqnarray} \label{left!}
(L_{xayb})^* &=&  (1 - P^x_a P^b_y)^* =
1 - (P^x_a P^b_y)^*
\cr
& \supset &  1 - (P^b_y)^*(P^x_a)^*
\cr
& \supset &
1 - P^{a^\perp}_{x^\perp} P^{y^\perp}_{b^\perp}  =
L_{y^\perp b^\perp x^\perp a^\perp} .
\end{eqnarray}
with equality under the assumptions of part (2). Now we prove part (1):
\begin{eqnarray*}
\Gamma(a,x,b,y,z)^\top = (L_{xayb}z)^\perp & = &  ((1 - P^x_a P^b_y)z)^\perp
\cr
& \supset &  ((1-P^x_a P^b_y)^*)\inv z^\perp
\cr
&\supset &
(L_{y^\perp b^\perp x^\perp a^\perp})\inv z^\perp
\cr
&=&
L_{x^\perp b^\perp y^\perp a^\perp}  z^\perp  = \Gamma(x^\perp , b^\perp , y^\perp , a^\perp ,  z^\perp)
\end{eqnarray*}
Next assume that $\bB$ is a skew-field and $W = \bB^n$. Then the second inclusion
becomes an equality, but we do not know whether the inclusion from Lemma \ref{innocent}
always becomes an equality. Therefore we will invoke Lemma \ref{conjugation}:
Choose an auxiliary element
$c \in \cX$. Then, using the fact that equality holds in (\ref{Ar}), along with Lemma \ref{conjugation}
and $(L_{xayb})^*=L_{y^\perp b^\perp x^\perp a^\perp}$, we get, on the one hand,
\begin{eqnarray*}
(L_{xayb}^{-1})^* (P_z^c)^* (L_{xayb})^* &=&(L_{xayb} P_z^c L_{xayb}^{-1})^*
\cr
&=& (P^{( \Gamma(x,a,y,b,c) )}_{( \Gamma(x,a,y,b,z))})^*
\cr
&=&
P^{( \Gamma(x,a,y,b,z) )^\perp}_{( \Gamma(x,a,c,b,z) )^\perp}
\end{eqnarray*}
and on the other hand,
\begin{eqnarray*}
(L_{xayb}^{-1})^* (P_z^c)^* (L_{xayb})^*
 &=&
(L_{xayb}^*)^{-1} (P_z^c)^* (L_{xayb})^* \cr
 &=&
L_{y^\perp b^\perp x^\perp a^\perp}^{-1}
 P_{c^\perp}^{z^\perp}
L_{y^\perp b^\perp x^\perp a^\perp}
\cr
&=&
L_{x^\perp b^\perp y^\perp a^\perp}
 P_{c^\perp}^{z^\perp}
L_{x^\perp b^\perp y^\perp a^\perp}^{-1}
\cr
&=&
P^{\Gamma(x^\perp,b^\perp,y^\perp,a^\perp,z^\perp)}_{
\Gamma(x^\perp,b^\perp,y^\perp,a^\perp,c^\perp)} \,  .
\end{eqnarray*}
Comparing images and kernels of these projections yields
the desired equality.
%
\end{proof}
With Lemma \ref{clue'}, the theorem implies

\begin{corollary}
All  classical groups over fields or skew-fields, and all of their
homotopes $\cG(\tau;a)$, admit a canonical semigroup
completion $\cX_{a,\tau a} \cap \cY$ (which is a compactification if
$\K=\R$ or $\C$).
\end{corollary}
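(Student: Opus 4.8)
The plan is to assemble the corollary from Theorem~\ref{semitorsor} and Lemma~\ref{clue'}, together with the realization of the classical groups established in Chapters~3 and~4. First I would recall that, for $W=\bB^n$ over a skew-field $\bB$ and a non-degenerate Hermitian or skew-Hermitian form $\beta$, each classical group and each of its homotopes was realized as a torsor $\cG(\tau;a)=U_{a,\tau(a)}\cap\cY$, where $\tau=\perp_\beta$ is the orthocomplementation map, $\cY=\cX^\tau$ is the associated Lagrangian geometry, and $a$ ranges over the homotopy parameters (Lemma~\ref{clue}, together with Theorems~\ref{conjug} and~\ref{unitalMaintheorem} and the classification propositions of Chapter~4). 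The restricted torsor $U_{a,\tau(a)}$ is the set of common complements of $a$ and $\tau(a)$ inside the full Grassmannian $\cX=\Gras(W)$, on which the globally defined quintary product $\Gamma$ is available.

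The decisive input is part~(2) of Theorem~\ref{semitorsor}: over a skew-field with $W=\bB^n$, the orthocomplementation map $\tau$ is not merely a restricted involution but a \emph{global} involution of $(\cX,\Gamma)$, intertwining $\Gamma$ with its reversed product with no transversality restriction whatsoever. Feeding this into Lemma~\ref{clue'}, I obtain that $\tau$ is a semitorsor automorphism of $\cX_{a,\tau(a)}$, the whole of $\cX$ equipped with the ternary law $(xyz)=\Gamma(x,a,y,\tau(a),z)$, and that its fixed-point set $\cY$ is a subsemitorsor. Thus $\cX_{a,\tau(a)}\cap\cY$ carries an induced semitorsor structure whose restriction to the transversal (``invertible'') points $U_{a,\tau(a)}\cap\cY=\cG(\tau;a)$ is exactly the torsor law of the group.

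To pass from a semitorsor to a semigroup I would fix the group identity $y\in\cG(\tau;a)$ as base point; the ternary law then specializes to the binary product $x\cdot z:=(xyz)$, which is associative by the para-associative law and which extends the multiplication of $\cG(\tau;a)$. The group is recovered as the group of invertible elements of this semigroup and is Zariski-dense in $\cX_{a,\tau(a)}\cap\cY$, so the latter is the announced semigroup completion. Observe that, as a \emph{set}, $\cX_{a,\tau(a)}\cap\cY$ equals all of $\cY$ and is therefore independent of the homotopy parameter $a$; only the semitorsor product, carried by $\Gamma(\,\cdot\,,a,\,\cdot\,,\tau(a),\,\cdot\,)$, varies with $a$.

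Finally, for $\K=\R$ or $\C$ the Grassmannian $\cX=\Gras(W)$ is a compact manifold and $\tau$ is continuous, so $\cY=\cX^\tau$ is closed and hence compact, giving the compactification claim. The genuinely hard work---upgrading the inclusion of part~(1) of Theorem~\ref{semitorsor} to the equality of part~(2), so that $\tau$ is a bona fide global involution---is already done; the only point in the corollary itself requiring care is the routine check that fixing the base point $y$ turns the subsemitorsor into a semigroup whose unit is $y$ and whose group of units is $\cG(\tau;a)$, which one reads off from the para-associative identities (or from the affine model of the semigroup hull given in the Introduction).
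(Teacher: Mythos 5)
Your proposal is correct and follows essentially the same route as the paper, which obtains the corollary by combining part (2) of Theorem \ref{semitorsor} (orthocomplementation is a \emph{global} involution over a skew-field with $W=\bB^n$) with Lemma \ref{clue'} and the realization $\cG(\tau;a)=U_{a,\tau(a)}\cap\cY$ from Chapters 3--4. The details you add (fixing the unit $y$ to pass from the semitorsor to a semigroup via para-associativity, and compactness of $\cY$ as a closed subset of the compact Grassmannian when $\K=\R$ or $\C$) are exactly the routine verifications the paper leaves implicit.
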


\begin{remark}
The \emph{classification} of classical semitorsors $\cX_{a,\tau a}$ is only slightly
 more complicated than the one of the torsors $\cG(\tau_a)$ from Chapter 4:  it suffices to classify
all orbits of $\Aut(\cX)$ in $\cX \times \cX$, resp.\ all
$\Aut(\cY)$-orbits in $\cX$.  However,
 the internal structure of the semitorsors may be very complicated!
In other words, the classification of \emph{semigroups} is much more difficult
than the one of \emph{semitorsors} (a semitorsor contains many semigroups).
\end{remark}

Finally, we conjecture that part (2) of Theorem \ref{semitorsor} still holds in the context of
Hilbert Lagrangians (see Subsection \ref{Hilbert}),
providing semitorsor-completions of infinite-dimensional classical groups and their
homotopes. This conjecture is supported by the fact that the orthocomplementation map
of a Hilbert Grassmannian is a lattice involution. However, our proof uses finite-dimensionality
at several places, and thus does not generalize directly to this setting.

\section{Towards an axiomatic theory}

In a way similar to the intrinsic-axiomatic description of associative geometries from
Chapter 3 of Part I, we would like to describe axiomatically
the Lagrangian geometries $\cY$ with their torsor
and semitorsor structures -- so far they are  only defined
by \emph{construction} and not by intrinsic properties.
What are these properties?
Certainly, on the one hand, the various group and torsor structures seem to be
the most salient feature. But, on the other hand, there is an underlying
``projective" structure playing an important r\^ole -- indeed, Lagrangian geometries
are special instances of \emph{generalized projective geometries} as defined in
\cite{Be02}. This is most obvious on the ``infinitesimal" level of the corresponding
algebraic structures: besides the Lie algebra structures $[x,y]_a$ which
correspond to the groups and torsors, there are also \emph{Jordan algebra}
structures $x \bullet_a y = (xay + yax)/2$, and Jordan structures correspond precisely
to generalized projective geometries. There are purely algebraic concepts
combining these two structures (``Jordan-Lie" and "Lie-Jordan algebras";
cf.\ \cite{E84}, \cite{Be08c}), and the Lagrangian geometries considered
here should be their geometric counterparts.
In particular, the infinite dimensional
Hilbert Lagrangian geometry then is the geometric analog of the Jordan-Lie
algebra of observables in Quantum Mechanics -- see \cite{Be08c} for a
discussion of some motivation coming from physics.

\section*{Appendix A: Associative pairs and their involutions}

Recall (e.g., from \cite{BeKi09}, Appendix B, or \cite{Lo75}) that an
 \emph{associative pair (over $\bK$)} is a pair $(\bA^+,\bA^-)$ of
$\bK$-modules together with two trilinear maps
\[
\langle \cdot,\cdot,\cdot \rangle^\pm :\bA^\pm \times \bA^\mp \times \bA^\mp \to
\bA^\pm
\]
such that
\[
\langle xy \langle zuv\rangle^\pm \rangle^\pm = \langle\langle xyz\rangle^\pm uv\rangle^\pm=
\langle x\langle uzy\rangle^\mp v\rangle^\pm .
\]

\begin{definition}
A \emph{type preserving involution} of $(\bA^+,\bA^-)$ is a pair
 $(\tau^+:\bA^+ \to \bA^+,\tau^-:\bA^- \to \bA^-)$ of $\K$-linear mappings such
that $\tau^\pm$ are of order $2$ and
$$
\tau^\pm \langle uvw\rangle^\pm = \langle \tau^\pm w,\tau^\mp v, \tau^\pm u\rangle^\pm.
$$
A \emph{type exchanging involution} of $(\bA^+,\bA^-)$ is a pair
 $(\tau^+:\bA^+ \to \bA^-,\tau^-:\bA^- \to \bA^+)$ of $\K$-linear mappings such
that $\tau^+$ is the inverse of $\tau^-$ and
$$
\tau^\pm \langle uvw\rangle^\pm = \langle \tau^\mp w,\tau^\pm v, \tau^\mp u\rangle^\pm.
$$
In other words, a type preserving involution is an isomorphism onto the opposite
pair of $(\bA^+,\bA^-)$, and a type exchanging involution is an
isomorphism onto the dual of the opposite pair, where
the \emph{opposite pair} is obtained by reversing orders in
products, and the \emph{dual pair} is obtained by exchanging the
r\^oles of $\bA^+$ and $\bA^-$.
\end{definition}

Clearly, for any involution $\tau=(\tau^+,\tau^-)$, the pair
$\tau':=(-\tau^+,-\tau^-)$ is again an involution (type preserving,
resp.\ exchanging iff so is $\tau$); we call it the \emph{dual
involution}. For a type preserving involution, the pairs of
$1$-eigenspaces or of $-1$-eigenspaces in general
do not form associative pairs (but they are \emph{Jordan pairs}, see
\cite{Lo75}). For type exchanging involutions, there is an
equivalent description in terms of  \emph{triple systems\/}:
recall that
an \emph{associative triple system of the second kind} is a $\K$-module $\bA$ together
with
a trilinear map $\bA^3 \to \bA$, $(x,y,z) \mapsto \langle xyz\rangle$ satisfying
the preceding identity obtained by omitting superscripts
(see  \cite{Lo72}), and
an \emph{associative triple system of the first kind}, or
\emph{ternary ring}, is a $\K$-module $\bA$ together
with
a trilinear map $\bA^3 \to \bA$, $(x,y,z) \mapsto \langle xyz\rangle$ satisfying
the identity
\[
\langle xy \langle zuv\rangle\rangle  \, = \,  \langle\langle xyz\rangle uv\rangle
\, = \, \langle x\langle yzu\rangle v\rangle
\]
(see \cite{Li71}).
It is easily checked that, if $(\tau^+,\tau^-)$ is a type exchanging involution,
the space $\AAA:=\AAA^+$ with
$$
\langle x,y,z\rangle:= \langle x,\tau^+ y,z\rangle^+
$$
becomes an associative triple system of the second kind. Conversely,
from an associative triple system of the second kind we may reconstruct
an associative pair with  type exchanging involution:
$\bA^+:=\bA =:\bA^-$,
$ \langle x,\tau^+ y,z\rangle^\pm := \langle x,y,z\rangle$, $\tau^\pm$ given by the identity map
of $\bA^\pm \to \bA^\mp$.

In the same way, \emph{automorphisms of order two} from $(\bA^+,\bA^-)$
onto the opposite pair $(\bA^-,\bA^+)$ correspond to associative
triple systems of the first kind.

\ssk \nin
{\bf Examples.}
1. Every associative algebra $\bA$ with
$\langle xyz\rangle =xyz$ is an associative triple system of the first kind.
It is equivalent to the associative pair $(\bA,\bA)$ with the
exchange automorphism (which is not an involution, in our terminology).

\ssk
2. The space of rectangular matrices $M(p,q;\K)$ with
$$
\langle XYZ\rangle = XY^t Z
$$
forms  an associative triple system of the second kind.
It is equivalent to the associative pair
$(\bA^+,\bA^-)=(M(p,q;\K),M(q,p;\K))$
with type exchanging involution $X \mapsto X^t$.

\ssk
3. For any involutive algebra $(\bA,*)$, the map
$(x,y) \mapsto (x^*,y^*)$ is a type preserving involution of the
associative pair $(\bA,\bA)$.

\ssk \nin
{\bf Remark.} We do not have
an example of an associative pair with a type preserving involution
which is \emph{not} obtained via Example 3 above.
In finite dimension over a field the existence of such  examples seems
rather unlikely, but there might exist
 infinite dimensional examples which are ``very close'', but not isomorphic,
to pairs of the type  $(\bA,\bA)$, and admit a type-preserving involution.

%

%
\end{document}